\numberwithin{equation}{section}
\definecolor{webgreen}{rgb}{0,.5,0}
\definecolor{webbrown}{rgb}{.8,0,0}
\definecolor{emphcolor}{rgb}{0.95,0.95,0.95}
\newtheorem{theorem}{Theorem}[section]
\newtheorem{proposition}[theorem]{Proposition}
\newtheorem{lemma}[theorem]{Lemma}
\theoremstyle{definition}
\newtheorem{definition}[theorem]{Definition}
\theoremstyle{definition}
\theoremstyle{definition}
\newtheorem{remark}{Remark}
\theoremstyle{definition}
\newtheorem{assumption}{Assumption}
\title{A Mean Field Game Approach to Relative Investment-Consumption Games with Habit Formation}
\author{Zongxia Liang\thanks{ Email: liangzongxia@mail.tsinghua.edu.cn, 
		Department of Mathematical Sciences, Tsinghua University, Beijing, China.}
	\and Keyu Zhang\thanks{ Email: Zhangky21@mails.tsinghua.edu.cn, 
		Department of Mathematical Sciences, Tsinghua University, Beijing, China.}}
\date{}
\begin{document}
\maketitle

\begin{abstract}
This paper studies an optimal investment-consumption  problem  for competitive agents with exponential or power utilities and a common finite time horizon. Each agent regards the average of  habit formation and  wealth from all peers  as benchmarks to evaluate the performance of her decision.  We formulate the $n$-agent game problems and the corresponding mean field game problems under the two utilities. One mean field equilibrium is derived in a closed form in each problem. In each problem with $n$ agents,  an approximate Nash equilibrium is then constructed using the obtained mean field equilibrium when $n$ is sufficiently large. The explicit convergence order in each problem can also be obtained. In addition, we provide some numerical illustrations of our results. 

\noindent \textbf{JEL Classification}: G11, C73, E21
\\
\noindent \textbf{Mathematics Subject Classification (2020)}: 49N80, 91A16, 91B10, 91B69
\\
\noindent {\small\textbf{Keywords:} Optimal investment and consumption, relative performance,  habit formation, mean field game, approximate Nash equilibrium}
\end{abstract}
\section{Introduction}
Relative performance is of tantamount importance in  both the economic and sociological studies. As stated in \cite{2017price},  ``making a 1 EUR profit when everyone else made 2 EUR “feels” distinctly different had everyone else lost 2 EUR". The intuition behind is that human beings tend to compare themselves to their peers or some benchmarks. Such facts have been very well documented, see, for example, \cite{abel90}, \cite{GOMEZ200795} and \cite{Gali94}.  As a natural way to model the interactions among agents, relative performance has been also introduced in portfolio optimization problems. To name a few recent studies, we refer to \cite{2013OPTIMAL}, \cite{D2017Mean}, \cite{2020Many}, \cite{doi:10.1137/20M138421X}, \cite{2021Meanfieldito}, \cite{2022Meanfieldpg}, \cite{2022Meanfieldpgc}, \cite{2022habit} among others.

The study of consumption habit formation is a classical topic in financial economics. The time non-separable habit formation preference can address the shortcomings of the classical time-separable preferences and provide an explanation for the  equity premium puzzle, see \cite{f8b89b64-c09e-3f5f-b47c-1a4cf3168fdc}. As a benchmark reflecting the past consumption of agents,  habit formation can be regarded as a relative performance index and  explain the effect of past consumption patterns on current and future decisions. In a multi-agent scenario, each agent's habit level is usually set to depend on the average of the aggregate consumption from all agents; see \cite{abel90}, \cite{bd6761ac-561d-37c7-856a-f5e187d68cd9}, \cite{ABEL19993}, \cite{2022habit} and references therein. Equilibrium consumption behavior in this setting fits naturally  with the models of ``catching up with the Joneses": each agent makes decisions by competing with the historical consumption record from others.

In this paper, we revisit the many-agent games of investment-consumption optimization, however, under a new setting of relative performance criteria for agents having exponential or power utilities. Each individual regards the average of  habit formation and  wealth from all peers  as benchmarks to evaluate the performance of her decision. Our model allows agents to suppress the consumption below the average habit level form time to time to accumulate higher wealth from the financial market. It can be linked to the concept of nonaddictive habit formation, for more studies on nonaddictive habit formation, see \cite{abel90}, \cite{dda705ff-8999-3974-bcb9-25a27cb27b25} and \cite{ABEL19993}.  

We aim to investigate the equilibrium investment and consumption behavior in a large population game, in which each agent trades between their individual stock and common riskless asset. However, stochastic differential games with a large number $n$ of players are rarely tractable. \cite{2006Large} and \cite{Lasry-2007} innovatively introduce a mean field game approach to handle large population games. The MFG approach amounts to looking for a Nash equilibrium in the limiting regime, which turns out to be an approximate Nash equilibrium for the corresponding $n$-agent game. Inspired by this, we turn to consider the mean field game problems with infinitely many heterogeneous agents. In each problem with exponential utility and power utility respectively, we can obtain one mean field equilibrium  in an analytical form. Moreover, we further construct the approximate Nash equilibrium for the corresponding $n$-agent game using the obtained mean field equilibrium with the explicit order of approximation error. 

The most closely related works to ours are  \cite{2020Many} and \cite{2022habit}.  {In contrast to \cite{2020Many},  we do not incorporate common noise into our model due to technical issues, which will be elaborated upon in Section \ref{sect6}.} The benchmark of agent's consumption decision in the present paper is generated by the weighted average integral of consumption of all agents, which makes the analysis involved and the model realistic as it captures the influence of historical consumption behavior on current and future decisions.  Despite the absence of common noise, we do obtain elements not present in \cite{2020Many}, for example, the hump-shaped consumption path in our model. Various empirical studies indicate that consumption spending of individuals usually have a hump-shaped pattern; see \cite{RePEc:aea:aecrev:v:59:y:1969:i:3:p:324-30} and \cite{10.1162/rest.89.3.552}. Recently, \cite{doi:10.1137/22M1471560} provided a theoretical justification for the consumption hump through addictive habit formation. As an important add-on to the literature of consumption hump, our paper puts forward a new understanding of its formation. Moreover, it is shown that the exponential utility model in our framework is also tractable, while \cite{2020Many} only focus on the CRRA utilities. Comparing to  \cite{2022habit}, we additionally consider the relative wealth concern, increasing the complexity of agent interaction.  From a mathematical point of view, the more complex interaction gives rise to some new challenges in the proof of the consistency condition for the conjectured mean field equilibrium. To be specific, the consistency condition in our paper leads to a fixed point system. In the exponential utility case, the fixed point system (\ref{orgfix}) can be decoupled and reduces to a functional ODE (\ref{new fixed point}). The well-posedness of the functional ODE follows from a standard contraction mapping argument. However, in the power utility case, the fixed point system (\ref{general fixedpower}) has a strong coupling structure.  Some technical arguments are required to derive some estimations and the existence of the fixed point is guaranteed by Schauder's fixed point theorem, see  Proposition \ref{prop2.3power}. It is worth noting that \cite{2022habit} only addressed the case of homogeneous agents in the MFG problems, while our work solves the MFG problems of heterogeneous agents. Thanks to the analytical form of the mean field equilibrium and the approximate Nash equilibrium, we can make some sensitivity analysis. We find that competition mainly affects agent's consumption behavior and  the monotonicity of MFE consumption  highly depends on model parameters.

In summary, the contributions of our paper are as follows: first, our paper contributes to the literature of portfolio games with consumption by featuring our path-dependent competition benchmark. Portfolio games with consumption  are poorly explored, except in \cite{2020Many}, \cite{doi:10.1137/20M138421X} and \cite{2022Meanfieldpgc}. { In contrast to the aforementioned research, one of competitive benchmarks in our paper  is generated by the weighted average integral of consumption controls from all competitors, which  makes the benchmark path-dependent.}  Second, our work  gives an explanation to the hump-shaped consumption from the perspective of competition. Note that \cite{2022habit} only observe the hump-shaped consumption when the initial habit is very high, and our model shows the hump-shaped consumption even when the initial habit level is relatively low, indicating that the heterogeneity of agents is particularly crucial in explaining this phenomenon.

The rest of the paper is organized as follows. In Section  \ref{sect2}, we introduce the $n$-agent game problems for agents having exponential or power utilities.  In Section \ref{sect3}, we formulate two MFG problems with two types of utility function and infinitely many agents. A mean field equilibrium in each problem is established in analytical form. In Section \ref{sect4}, we use the mean field equilibrium to construct an approximate Nash equilibrium in each $n$-agent game. The explicit order of the approximation error in each problem is derived. Some numerical results and their financial implications are presented in Section \ref{sect5}. We conclude in Section \ref{sect6} with a discussion of open problems.  Finally, the proofs of some auxiliary results are given in Appendix.
\section{Problem Formulation}\label{sect2}
We consider a large population system with $n$ negligible agents, and each agent invests in their own specific stock or in a common riskless bond which offers zero interest rate.  The common time horizon for all agents is $[0,T]$ with $T>0$. The price process of stock $i$, in which only agent $i$ trades, follows the following stochastic differential equation (SDE)
\begin{equation}
	\frac{dS_{t}^{i}}{S_{t}^{i}}=\mu_{i}dt+\sigma_{i}dW_{t}^{i},
\end{equation}
with constant parameters $\mu_{i}>0$ and $\sigma_{i}>0$, where the Brownian motions $W^{1},\cdots,W^{n}$ are independent  on a filtered probability space $(\Omega,\mathcal{F},\mathbb{F},\mathbb{P})$, where the filtration $\mathbb{F}:=(\mathcal{F}_{t})_{t\in[0,T]}$ satisfies the usual conditions. A similar model setting with asset specialization can be found in \cite{D2017Mean}, \cite{2020Many} and \cite{2022habit}. {However, we do not assume the presence of common noise as it poses technical challenges. We will elaborate on this in Section \ref{sect6}. }

Each agent $i$ trades, using a self-financing strategy, $\Pi^{i}:=\{\Pi_{t}^{i}, 0\leq t\leq T \}$,  representing the amount invested in the stock $i$, and a consumption policy, $C^{i}:=\{C^{i}_{t}, 0\leq t\leq T \}$, representing the consumption rate process of agent $i$. We also introduce the portfolio-to-wealth proportion $\pi^{i}_{t}:=\Pi^{i}_{t}/X^{i}_{t}$ and consumption-to-wealth proportion $c^{i}_{t}:=C^{i}_{t}/X^{i}_{t}$. Then the $i$-th agent's wealth process $X^{i}:=\{X^{i}_{t}, 0\leq t\leq T\}$ satisfies  
\begin{equation}\label{wealth..}
	dX^{i}_{t}=\Pi_{t}^{i}\mu_{i}dt+\Pi_{t}^{i}\sigma_{i}dW_{t}^{i}-C^{i}_{t}dt,\quad X^{i}_{0}=x^{i}_{0},
\end{equation}
 or  
 \begin{equation}\label{wealthpower}
 	\frac{dX^{i}_{t}}{X_{t}^{i}}=\pi_{t}^{i}\mu_{i}dt+\pi_{t}^{i}\sigma_{i}dW_{t}^{i}-c^{i}_{t}dt,\quad X^{i}_{0}=x^{i}_{0}.
 \end{equation}

Each individual has a  habit formation process $Z^{i}=\left\{Z^{i}_{t},0\leq t\leq T\right\}$  generated by the consumption rate process $C^{i}$ satisfying
\begin{equation}
	dZ^{i}_{t}=-\delta_{i}(Z^{i}_{t}-C^{i}_{t})dt,\quad Z^{i}_{0} = z^{i}_{0}>0,
\end{equation}
where $z^{i}_{0}$ represents the initial consumption habit of the individual and  $\delta_{i}>0$ is a constant determining how much current habit is influenced by the recent rate of consumption relative to the consumption rate farther in the past. It follows that
\begin{equation}
	Z^{i}_{t} = e^{-\delta_{i}t}\left(z_{0}^{i}+\int_{0}^{t}\delta_{i}e^{\delta_{i}s}C^{i}_{s}ds\right),\quad t\in[0,T].
\end{equation}
For the group of $n$ agents in the financial market, let us define their average habit formation process by \[\overline{Z}^{n}_{t}:=\frac{1}{n}\sum_{i=1}^{n}Z^{i}_{t},\quad t\in[0,T],\]
which depicts the average of aggregate consumption trend in the economy.

In this paper, we analyze the relative investment-consumption games, for agents having exponential or power utilities. In the exponential case, the objective function is defined by
\begin{equation}\label{ojb}
	\begin{split}
		J_{i}\mathbf{(\Pi,C)}:=\mathbb{E}\left[\int_{0}^{T}U_{i}\left(C^{i}_{t}-\theta_{i}\overline{Z}^{n}_{t}\right)dt+U_{i}\left(X^{i}_{T}-\theta_{i}\overline{X}_{T}^{n}\right)\right],
	\end{split}
\end{equation}
where $\mathbf{(\Pi,C)}:=\left((\Pi^{1},C^{1}),\cdots,(\Pi^{n},C^{n})\right)$, $\overline{X}_{T}^{n}:=\frac{1}{n}\sum_{i=1}^{n}X_{T}^{i}$, $\theta_{i}\in(0,1]$ represents the competition level of the relative performance, and  $U_{i}:\mathbb{R}\rightarrow\mathbb{R}$ is the exponential utility of agent $i$ that
\begin{equation}
	U_{i}(y) = -\exp\left\{-\frac{1}{\beta_{i}}y\right\},\quad \beta_{i}>0 ,\,\,\ y\in\mathbb{R}.
\end{equation}
 In the power case, the objective function is 
\begin{equation}\label{ojbpower}
	\begin{split}
		J_{i}\mathbf{(\pi,c)}:=\mathbb{E}\left[\int_{0}^{T}U_{i}\left(\frac{c^{i}_{t}X^{i}_{t}}{\left(\overline{Z}^{n}_{t}\right)^{\theta_{i}}}\right)dt+U_{i}\left(\frac{X^{i}_{T}}{\left(\overline{X}_{T}^{n}\right)^{\theta_{i}}}\right)\right],
	\end{split}
\end{equation}
where $\mathbf{(\pi,c)}=((\pi^{1},c^{1}),\cdots,(\pi^{n},c^{n}))$, $\overline{X}_{T}^{n}:=\left(\prod_{i=1}^{n} X_{T}^{i}\right)^{1/n}$, $\theta_{i}\in(0,1]$ represents the relative competition weight,  and  $U_{i}:\mathbb{R}_{+}\rightarrow\mathbb{R}_{+}$ is the power utility of agent $i$ that
\begin{equation}
	U_{i}(y) = \frac{1}{p_{i}}y^{p_{i}},\quad p_{i}\in (0,1),\quad y\geq0.
\end{equation}
{
\begin{remark}
The term $C^{i}_{t}-\theta_{i}\overline{Z}^{n}_{t}$ in $(\ref{ojb})$, commonly known as linear habit formation in the literature, measures the difference between the current consumption rate and the average of the aggregate consumption from all agents. On the other hand, the term $(c^{i}_{t}X^{i}_{t})/\left(\overline{Z}^{n}_{t}\right)^{\theta_{i}}$ in (\ref{ojbpower}), referred to as multiplicative habit formation, represents the ratio of the current consumption rate to the average aggregate consumption.

Inspired by \cite{D2017Mean} and \cite{2020Many}, we combine exponential utility with linear habit formation and power utility with multiplicative habit formation to investigate the equilibrium behavior of agents under external habit formation.
\end{remark}}
The admissible control sets for both problems are defined as follows.
\begin{definition}{(Admissible controls in the exponential case)}\label{admiss}
	
	We denote $\mathcal{A}^{i,e}_{t}$ as the set of $\mathbb{F}$-progressively measurable processes $(\Pi^{i}, C^{i})$ such that $$\mathbb{E}\left[\int_{t}^{T}\left[\left(\Pi^{i}_{s}\right)^{2}+\left(C^{i}_{s}\right)^{2}\right]ds\right]<\infty.$$
\end{definition}
Then, for any $\left(x^{i}_{0},\Pi^{i},C^{i}\right)\in\mathbb{R}\times\mathcal{A}_{0}^{i,e}$, we have the following estimate from (\ref{wealth..}):
\begin{equation}\label{estimate}
	\mathbb{E}\left[\sup_{t\in[0,T]}|X^{i}_{t}|^{2}\right]\leq K\left(1+(x^{i}_{0})^{2}\right)
	\end{equation}
for some constant $K$.

\begin{definition}{(Admissible controls in the power case)}\label{admisspower}

	We denote $\mathcal{A}^{i,p}_{t}$ as the set of $\mathbb{F}$-progressively measurable processes $(\pi^{i}, c^{i})$ such that $c^{i}_{s}\geq0$ a.s. for $s\in[t,T]$, and no bankruptcy is allowed that $X^{i}_{s}\geq0$ a.s. for $s\in[t,T]$. 
\end{definition}
{
	\begin{remark}
	Let us explain why we have two distinct settings for admissible controls in the two problems.  In the second problem, it is necessary for $c^{i}$ and $X^{i}$ to be nonnegative, as the power utility function is defined on $\mathbb{R}_{+}$. On the other hand, as the exponential utility function is defined on $\mathbb{R}$, the wealth process $X^{i}$ and consumption $C^{i}$ are no longer required to be nonnegative. This also makes the problem more tractable compared to the scenario with a  nonnegativity constraint on the consumption rate.
	In fact, there are many papers on optimal consumption problems with exponential utility that assume $C_{t}\in\mathbb{R}$ for technical convenience, see for example \cite{1969Merton}, \cite{2004Liu}, \cite{2015Dimitri}, \cite{2019ma} and \cite{2021hamag}. Although the first problem lacks economic sense relative to the second one, we choose to keep it because it is tractable and possesses a unique mean field equilibrium as outlined in Proposition \ref{prop2.3}, which can be contrasted with the results of the second problem.
	\end{remark}
}
 For technical convenience, we make the following assumption:
\begin{assumption}\label{Assumption 1}
	
   Assume that all agents are homogeneous in their initial wealth, the initial habit, the habit discounting factor such that $\left(x_{0}^{i},z_{0}^{i},\delta_{i}\right)=\left(x_{0},z_{0},\delta\right)\in\mathbb{R}_{+}^{3}$, $\forall i\in\left\{1,\cdots,n\right\}$.
\end{assumption}

 For agent $i\in\left\{1,\cdots,n\right\}$, we define the type vector $o_{i}=(\mu_{i},\sigma_{i},\beta_{i}, \theta_{i})$ in the exponential case (resp. $o_{i}=(\mu_{i},\sigma_{i},p_{i}, \theta_{i})$ in the power case). Then, the heterogeneity of $n$ agents is captured via their different type vectors $o_{i}\in\mathcal{O}_{e}$ (resp. $o_{i}\in\mathcal{O}_{p}$). The following assumption on type space $\mathcal{O}_{e}$ (resp. $\mathcal{O}_{p}$) will be needed throughout this paper. 
 \begin{assumption}\label{assump2}
  Assume that the type spaces  has the following structure
  \begin{align*}
  	&\mathcal{O}_{e}=\left\{(\mu_{1}, \sigma_{1}, \beta_{1}, \theta_{1}),\cdots,(\mu_{K}, \sigma_{K}, \beta_{K}, \theta_{K})\right\},\\
	&\mathcal{O}_{p}=\left\{(\mu_{1}, \sigma_{1}, p_{1}, \theta_{1}),\cdots,(\mu_{K}, \sigma_{K}, p_{K}, \theta_{K})\right\}, 
\end{align*}
where $K$ is a finite number.
 \end{assumption} 
Under Assumption \ref{assump2}, all the agents are divided into $K$ classes and the wealth process can be rewritten as 
\begin{equation}\label{dynaexp}
 	dX^{i}_{t}=\Pi_{t}^{i}\mu_{\alpha(i)}dt+\Pi_{t}^{i}\sigma_{\alpha(i)}dW_{t}^{i}-C^{i}_{t}dt,\quad X^{i}_{0}=x_{0}
\end{equation}
in the exponential case and  
\begin{equation}\label{dynapower}
	\frac{dX^{i}_{t}}{X_{t}^{i}}=\pi_{t}^{i}\mu_{\alpha(i)}dt+\pi_{t}^{i}\sigma_{\alpha(i)}dW_{t}^{i}-c^{i}_{t}dt,\quad X^{i}_{0}=x_{0}
\end{equation}
in the power case, where $\alpha(i)\in\left\{1,\cdots,K\right\}$ indicates the class to which the agent $i$ belongs. Similarly, in both cases the objective functions can be reformulated as follows, respectively:
\begin{equation*}
	\begin{split}
		&J_{i}\mathbf{(\Pi,C)}=\mathbb{E}\left[\int_{0}^{T}U_{\alpha(i)}\left(C^{i}_{t}-\theta_{\alpha(i)}\overline{Z}^{n}_{t}\right)dt+U_{\alpha(i)}\left(X^{i}_{T}-\theta_{\alpha(i)}\overline{X}_{T}^{n}\right)\right],\\
		&J_{i}\mathbf{(\pi,c)}=\mathbb{E}\left[\int_{0}^{T}U_{\alpha(i)}\left(\frac{c^{i}_{t}X^{i}_{t}}{\left(\overline{Z}^{n}_{t}\right)^{\theta_{\alpha(i)}}}\right)dt+U_{\alpha(i)}\left(\frac{X^{i}_{T}}{\left(\overline{X}_{T}^{n}\right)^{\theta_{\alpha(i)}}}\right)\right].
	\end{split}
\end{equation*}
{ It is worth mentioning that both $n$-agent games  cannot be solved in closed form. Consequently, we employ a mean field game approach to seek the approximate Nash equilibria. }
\section{Mean Field Games}\label{sect3}
We now proceed to formulate the auxiliary limiting mean field games when the number of agents grows to infinity. For the sequence $\left\{\alpha(i), i\geq1\right\}$, we define the empirical distribution associated with the first $n$ agents
\begin{equation*}
	F_{n}(\left\{k\right\})=\frac{1}{n}\sum_{i=1}^{n}1_{\{\alpha(i)=k\}}.
\end{equation*} 
The following assumption is needed to formulate the MFGs:
\begin{assumption}
 There exists a distribution function on $[K]:=\left\{1,\dots,K\right\}$, denoted as $\left(F(\left\{1\right\}),\cdots,F(\left\{K\right\})\right)$ such that $\lim_{n\rightarrow\infty}F_{n}\left(\left\{k\right\}\right)=F\left(\left\{k\right\}\right)$ for each $k\in[K]$.
\end{assumption}
\subsection{Exponential Utility}
We introduce the wealth process of a representative agent as follows
\begin{equation}\label{wealthrep.2}
	dX_{t}=\Pi_{t}\mu dt+\Pi_{t}\sigma dW_{t}-C_{t}dt,\quad X_{0}=x_{0},
\end{equation}
where $W$ is a Brownian motion on the probability space $\left(\Omega,\mathcal{F}, \mathbb{F},\mathbb{P}\right)$, the $\mathcal{F}_{0}$-measurable random type vector $o=(\mu,\sigma,\beta,\theta)$ is independent of $W$ and has a discrete distribution $m$, such that $$m\left(o=o_{k}\right)=F\left(\left\{k\right\}\right)$$ for each $o_{k}=(\mu_{k},\sigma_{k},\beta_{k},\theta_{k})\in\mathcal{O}_{e}$. 

By the law of large numbers, we should expect that $\overline{Z}^{n}=(\overline{Z}^{n}_{t})_{t\in[0,T]}$ and $\overline{X}^{n}_{T}$ can be approximated by a deterministic function $\overline{Z}=(\overline{Z}_{t})_{t\in[0,T]}\in\mathcal{C}_{T}:=C([0,T];\mathbb{R})$ and a constant $\overline{X}_{T}\in\mathbb{R}$ respectively when $n$ is sufficiently large.

Given  $\overline{Z}\in\mathcal{C}_{T}$ and $\overline{X}_{T}\in\mathbb{R}$, the dynamic version of the objective function for the representative agent is defined by 
\begin{equation}\label{obf.2}
	\begin{split}
		J^{r}((\Pi,C),t,x;\overline{Z},\overline{X}_{T}):=\mathbb{E}_{t,x}\left[\int_{t}^{T}-\exp\left\{-\frac{1}{\beta}\left(C_{t}-\theta\overline{Z}_{t}\right)\right\}dt-\exp\left\{-\frac{1}{\beta}\left(X_{T}-\theta\overline{X}_{T}\right)\right\}\right],
	\end{split}
\end{equation}
where $\mathbb{E}_{t,x}:=\mathbb{E}\left[\cdot|X_{t}=x\right]$. The stochastic control problem is given by 
\begin{equation}\label{scp}
	\sup_{\left(\Pi,C\right)\in\mathcal{A}_{t}^{r,e}}J^{r}((\Pi,C),t,x;\overline{Z},\overline{X}_{T}) = \mathbb{E}_{m}\left[V(t,x,o)\right]:=\int_{\mathcal{O}}V(t,x,o)m(do),
\end{equation}
where $\mathcal{A}_{t}^{r,e}$ is the admissible control set for the MFG problem defined similarly to $\mathcal{A}_{t}^{i,e}$, and $V(t,x,o)$ is the value function defined for a given type vector $o$.

We now formally give the definition of mean field equilibrium as follows.
\begin{definition}{(Mean Field Equilibrium)}\label{mfe}
	
	For given $\overline{Z}\in\mathcal{C}_{T}$ and $\overline{X}_{T}\in\mathbb{R}$, let $(\Pi^{*},C^{*})\in\mathcal{A}_{0}^{r,e}$ be the best response to the stochastic control problem (\ref{scp}) from the initial time $t=0$.    
	The control $(\Pi^{*},C^{*})$ is called a mean field equilibrium, if it is the best response  to  itself in the sense that $\overline{Z}_{t} =z_{0}e^{-\delta t}+\int_{0}^{t}\delta e^{\delta (s-t)}\mathbb{E}\left[C^{*}_{s}\right]ds,\ t\in[0,T]$ and $\overline{X}_{T} = \mathbb{E}\left[X^{*}_{T}\right]$, where $X^{*}$ is the wealth process under the optimal control $(\Pi^{*},C^{*})$ with $X^{*}_{0}=x_{0}$.
\end{definition}
By Definition \ref{mfe}, we shall first solve the stochastic control problem (\ref{scp}) for given $\overline{Z}\in\mathcal{C}_{T}$ and $\overline{X}_{T}\in\mathbb{R}$.  Let $o=\left(\mu, \sigma, \beta, \theta\right)$ now represent a deterministic sample from its random type distribution $m$. By dynamic programming principle, we derive that the associated HJB equation of value function $V^{r}(t,x):=V(t,x,o)$ is given by
\begin{equation}\label{HJB}
	\partial_{t}V^{r}(t,x)+\sup_{\Pi\in\mathbb{R}}\left(\mu\Pi \partial_{x}V^{r}+\frac{\sigma^{2}}{2}\Pi^{2}\partial^{2}_{x}V^{r}\right)+\sup_{C\in\mathbb{R}}\left(-C\partial_{x}V^{r}-\exp\left\{-\frac{1}{\beta}\left(C-\theta\overline{Z}_{t}\right)\right\}\right)=0
\end{equation}
with the terminal condition $V^{r}(T,x)=-\exp\left\{-\frac{1}{\beta}\left(x-\theta\overline{X}_{T}\right)\right\}$. The optimal control is given in the following lemma.
\begin{lemma}\label{lemma7.8}
	Given $\overline{Z}=(\overline{Z}_{t})_{t\in[0,T]}\in\mathcal{C}_{T}$ and $\overline{X}_{T}\in\mathbb{R}$, the classical solution to the HJB equation (\ref{HJB}) admits the following closed-form 
	\begin{equation}
		V^{r}(t,x)=-\exp\left\{a(t)x+b(t)\right\},\quad (t,x)\in[0,T]\times\mathbb{R},
	\end{equation}
	where \begin{equation}
		\begin{split}
			&a(t)=-\frac{1}{\beta}\frac{1}{T+1-t},\\
			&b(t)=\frac{1}{T+1-t}\frac{\theta}{\beta}\overline{X}_{T}+\frac{1}{T+1-t}\frac{\theta}{\beta}\int_{t}^{T}\overline{Z}_{s}ds+\log\left(T+1-t\right)\\&-\frac{1}{4}\left(\frac{\mu}{\sigma}\right)^{2}\left((T+1-t)-\frac{1}{T+1-t}\right).             
		\end{split}
	\end{equation}
	{ There exists a unique optimal control to Problem (\ref{scp}) and the feedback functions of the optimal investment and consumption are given by}
	\begin{equation}\label{strategy}
		\begin{split}
				&\Pi^{*}(t,x)=\beta\frac{\mu}{\sigma^{2}}\left(T+1-t\right),\\ 
				&C^{*}(t,x) =\frac{(x-\theta\overline{X}_{T})}{T+1-t}+\theta\overline{Z}_{t}-\frac{\theta}{T+1-t}\int_{t}^{T}\overline{Z}_{s}ds\\&+\frac{1}{4}\beta\left(\frac{\mu}{\sigma}\right)^{2}\left((T+1-t)-\frac{1}{T+1-t}\right).      
		\end{split}
	\end{equation}
\end{lemma}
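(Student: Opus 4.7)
The plan is to use the exponential-affine ansatz suggested jointly by the exponential utility and the exponential terminal condition, solve the HJB equation pointwise, reduce everything to two ODEs (one Riccati-type, one linear), and then close with a standard verification argument.

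First I would posit $V^{r}(t,x)=-\exp\{a(t)x+b(t)\}$, so that $V^{r}_{x}=a(t)V^{r}$, $V^{r}_{xx}=a(t)^{2}V^{r}$, and $V^{r}<0$ is strictly concave in $x$. Under this ansatz both inner maximizations in (\ref{HJB}) become explicit: the quadratic $\Pi$-supremum gives $\Pi^{*}(t,x)=-\mu V^{r}_{x}/(\sigma^{2}V^{r}_{xx})=-\mu/(\sigma^{2}a(t))$, and the first-order condition $\exp\{-(C-\theta\overline{Z}_{t})/\beta\}=\beta V^{r}_{x}$ (whose unique solution follows from strict concavity in $C$) gives $C^{*}(t,x)=\theta\overline{Z}_{t}-\beta\log(-a(t)\beta)-\beta a(t)x-\beta b(t)$, which is affine in $x$.

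Substituting both optimizers back into (\ref{HJB}) and dividing by the nonzero factor $V^{r}$, the identity separates into a coefficient of $x$ and an $x$-independent term. The $x$-coefficient yields $a'(t)+\beta a(t)^{2}=0$ with terminal condition $a(T)=-1/\beta$ (from matching $V^{r}(T,x)=-\exp\{-(x-\theta\overline{X}_{T})/\beta\}$), integrating to $a(t)=-1/[\beta(T+1-t)]$. Inserting this into the constant part produces a linear ODE of the form $b'(t)-b(t)/(T+1-t)=g(t)$, with forcing $g$ involving $\overline{Z}_{t}$, $\log(T+1-t)$, and $\mu^{2}/\sigma^{2}$; the integrating factor $T+1-t$ reduces it to integration, and the terminal value $b(T)=\theta\overline{X}_{T}/\beta$ fixes the constant, producing the formula stated for $b(t)$. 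Feeding $a$ and $b$ into the expressions above for $\Pi^{*}$ and $C^{*}$ reproduces (\ref{strategy}) after routine simplification.

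For the verification, I would check three standard items. The explicit $V^{r}$ is $C^{1,2}$ and solves (\ref{HJB}) with the correct terminal condition by construction. Admissibility of $(\Pi^{*},C^{*})$ follows since $\Pi^{*}$ is deterministic and bounded on $[0,T]$, and $C^{*}$ is affine in $x$ with bounded deterministic coefficients (using that $\overline{Z}\in\mathcal{C}_{T}$ is bounded); substituting into (\ref{wealthrep.2}) gives a linear SDE for $X^{*}$ with bounded coefficients, so the estimate in (\ref{estimate}) and standard $L^{2}$-bounds yield $(\Pi^{*},C^{*})\in\mathcal{A}^{r,e}_{0}$. Finally, applying It\^o to $V^{r}(t,X_{t})$ along an arbitrary admissible $(\Pi,C)$, using the HJB inequality, and localizing gives $V^{r}(0,x_{0})\geq J^{r}((\Pi,C),0,x_{0};\overline{Z},\overline{X}_{T})$, with equality along $(\Pi^{*},C^{*})$ because the Hamiltonian is attained there; uniqueness of the maximizer inside the Hamiltonian comes for free from strict concavity in $(\Pi,C)$.

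The only non-routine point I expect is the passage to the limit in the localization, since $V^{r}(t,X_{t})=-\exp\{a(t)X_{t}+b(t)\}$ is an exponential of a near-Gaussian process and not automatically uniformly integrable. Here the saving facts are that $a(t)$ is bounded and stays away from $0$ on $[0,T]$ (because $T+1-t\in[1,T+1]$), that $b$ is continuous and bounded, and that along any admissible control the exponential moments of $a(t)X_{t}$ can be controlled via It\^o's isometry (for the bounded deterministic optimal integrand $a(t)\Pi^{*}_{t}\sigma$) or via Novikov/Dol\'eans-Dade together with the $L^{2}$-integrability in Definition \ref{admiss} (for a general admissible control). Everything else is ODE bookkeeping.
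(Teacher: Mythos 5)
Your proposal is correct and follows essentially the same route as the paper: the exponential-affine ansatz, the first-order conditions $\Pi^{*}=-\mu V^{r}_{x}/(\sigma^{2}V^{r}_{xx})$ and $\exp\{-(C-\theta\overline{Z}_{t})/\beta\}=\beta V^{r}_{x}$, the resulting Riccati and linear ODEs for $a$ and $b$, and a concluding verification argument. The only difference is that you spell out the verification and the uniform-integrability issue in the localization, which the paper dismisses as ``standard verification arguments''; that extra care is welcome but does not change the substance.
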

\begin{proof}
	We first assume that the classical solution $V^{r}$ is strictly concave (i.e., $\partial^{2}_{x}V<0$). Then, the first-order condition gives the feedback functions: for $\left(t,x\right)\in[0,T]\times\mathbb{R}$,
	\begin{equation}\label{feedback}
	\Pi^{*}(t,x) = -\frac{\mu}{\sigma^{2}}\frac{\partial_{x}V^{r}(t,x)}{\partial^{2}_{x}V^{r}(t,x)}, \quad C^{*}(t,x) =\theta\overline{Z}_{t}-\beta\log\left(\beta\partial_{x}V^{r}(t,x)\right).
\end{equation}
  Plugging the functions (\ref{feedback}) into (\ref{HJB}), we have
  \begin{equation}\label{pde1}
  	0=\partial_{t}V^{r}(t,x)-\frac{\mu^{2}}{2\sigma^{2}}\frac{\left(\partial_{x}V^{r}(t,x)\right)^{2}}{\partial^{2}_{x}V^{r}(t,x)}+\partial_{x}V^{r}(t,x)\left(\beta\log\left(\beta\partial_{x}V^{r}(t,x)\right)-\theta\overline{Z}_{t}-\beta\right).
  \end{equation}
To solve (\ref{pde1}), we make the ansatz: 
\begin{equation}\label{ansatz}
	V^{r}(t,x)=-\exp\left\{a(t)x+b(t)\right\}, \,\ \left(t,x\right)\in[0,T]\times\mathbb{R}.
\end{equation}
Substituting (\ref{ansatz}) into (\ref{pde1}), we get 
\begin{equation}
	\left(\dot{a}(t)+\beta\left(a(t)\right)^{2}\right)x+\left(\dot{b}(t)+\beta a(t)b(t)+a(t)\left[\beta\log\left(-\beta a(t)\right)-\theta\overline{Z}_{t}-\beta\right]-\frac{\mu^{2}}{2\sigma^{2}}\right)=0.
\end{equation}
Then, we obtain the ODE for $a(\cdot)$ and $b(\cdot)$ 
\begin{equation}
	\begin{cases}
		\dot{a}(t)+\beta\left(a(t)\right)^{2}=0 ,\\
		\dot{b}(t)+\beta a(t)b(t)+a(t)\left[\beta\log\left(-\beta a(t)\right)-\theta\overline{Z}_{t}-\beta\right]-\frac{\mu^{2}}{2\sigma^{2}}=0,\\
	    a(T)=-\frac{1}{\beta},\quad b(T)=\frac{\theta}{\beta}\overline{X}_{T} ,
	\end{cases}
\end{equation}
thus
\begin{equation}\label{ab}
	\begin{split}
		&a(t)=-\frac{1}{\beta}\frac{1}{T+1-t},\\
		&b(t)=\frac{1}{T+1-t}\frac{\theta}{\beta}\overline{X}_{T}+\frac{1}{T+1-t}\frac{\theta}{\beta}\int_{t}^{T}\overline{Z}_{s}ds+\log\left(T+1-t\right)\\&-\frac{1}{4}\left(\frac{\mu}{\sigma}\right)^{2}\left((T+1-t)-\frac{1}{T+1-t}\right).             
	\end{split}
\end{equation}
Finally, it follows from (\ref{ansatz}) and (\ref{ab}) that $\partial^{2}_{x}V<0$ indeed holds. By some standard verification arguments, the (\ref{strategy}) follows. { Moreover, by the strict concavity of the exponential utility, we obtain that the optimal control $\left(\Pi^{*},C^{*}\right)$ is unique.} 
\end{proof}
Let $X^{*}=\left\{X^{*}_{t},0\leq t\leq T\right\}$ be the wealth process under the optimal investment-consumption strategy in (\ref{strategy}) that
\begin{equation}\label{a}
	dX^{*}_{t}=\Pi^{*}(t)\mu dt+\Pi^{*}(t)\sigma dW_{t}-C^{*}(t,X^{*}_{t})dt,\quad X^{*}_{0}=x_{0}.
\end{equation}
Recalling that the consistency conditions for $\overline{Z}$ and $\overline{X}_{T}$ are
\begin{equation}\label{fixexp}
	\begin{split}
		&\overline{Z}_{t}=e^{-\delta t}\left\{z_{0}+\int_{0}^{t}\delta e^{\delta s}\mathbb{E}\left[C^{*}(s,X^{*}_{s})\right]ds\right\},\quad t\in[0,T], \\
		&\overline{X}_{T} = \mathbb{E}\left[ X^{*}_{T}\right],
	\end{split}
\end{equation}
which are equivalent to 
\begin{equation}\label{orgfix}
	\begin{split}
		&d\overline{Z}_{t}=-\delta\left\{\overline{Z}_{t}-\mathbb{E}\left[C^{*}(t,X^{*}_{t})\right]\right\}dt, \quad t\in[0,T],\,\, Z_{0}=z_{0},\\
		&\overline{X}_{T} =\mathbb{E}\left[ X^{*}_{T}\right].
	\end{split}
\end{equation}
For a given $\left(\overline{X}_{T},\overline{Z}\right)\in\mathbb{R}\times\mathcal{C}_{T}$, we can solve $X^{*}$ explicitly
\begin{equation}
	\begin{split}
		&X^{*}_{t}=x_{0}\frac{T+1-t}{T+1}+\left(T+1-t\right)\int_{0}^{t}\left[\frac{\theta}{\left(T+1-s\right)^{2}}\left(\int_{s}^{T}\overline{Z}_{v}dv+\overline{X}_{T}\right)-\frac{\theta}{T+1-s}\overline{Z}_{s}\right.\\&\left.+\frac{1}{4}\left(\frac{\mu}{\sigma}\right)^{2}\beta\frac{1}{\left(T+1-s\right)^{2}}+\frac{3}{4}\left(\frac{\mu}{\sigma}\right)^{2}\beta\right]ds+\left(T+1-t\right)\frac{\mu}{\sigma}\beta W_{t}.\\
	\end{split}
\end{equation}
Therefore, we get 
\begin{equation*}
	\begin{split}
		f_{i}(t,\overline{X}_{T},\overline{Z})&:=\mathbb{E}\left[X^{*}_{t}|o=o_{i}\right]=x_{0}\frac{T+1-t}{T+1}+\left(T+1-t\right)\int_{0}^{t}\left[\frac{\theta_{i}}{\left(T+1-s\right)^{2}}\left(\int_{s}^{T}\overline{Z}_{v}dv+\overline{X}_{T}\right)\right.\\&\left.-\frac{\theta_{i}}{T+1-s}\overline{Z}_{s}+\frac{1}{4}\left(\frac{\mu_{i}}{\sigma_{i}}\right)^{2}\beta_{i}\frac{1}{\left(T+1-s\right)^{2}}+\frac{3}{4}\left(\frac{\mu_{i}}{\sigma_{i}}\right)^{2}\beta_{i}\right]ds,
	\end{split}
\end{equation*}
and 
\begin{equation}\label{Cexp}
	\begin{split}
		&\mathbb{E}\left[C^{*}(t,X^{*}_{t})\right]=\frac{\left(x_{0}-\mathbb{E}\left[\theta\right]\overline{X}_{T}\right)}{T+1}+\mathbb{E}\left[\theta\right]\overline{Z}_{t}+\int_{0}^{t}\frac{\mathbb{E}\left[\theta\right]}{\left(T+1-s\right)^{2}}\int_{s}^{T}\overline{Z}_{v}dvds\\&-\int_{0}^{t}\frac{\mathbb{E}\left[\theta\right]}{T+1-s}\overline{Z}_{s}ds-\frac{\mathbb{E}\left[\theta\right]}{T+1-t}\int_{t}^{T}\overline{Z}_{s}ds+\frac{1}{4}\mathbb{E}\left[\left(\frac{\mu}{\sigma}\right)^{2}\beta\right]\left[T+1+2t-\frac{1}{T+1}\right].
	\end{split}
\end{equation}
Thus, we have from (\ref{orgfix})
\	\begin{equation}\label{general fixed}
\begin{split}
	&d\overline{Z}_{t}=-\delta\left\{\overline{Z}_{t}-\mathbb{E}\left[C^{*}(t,X^{*}_{t})\right]\right\}dt, \quad t\in[0,T],\,\, Z_{0}=z_{0},\\
	&\overline{X}_{T}=\mathbb{E}\left[f(T,\overline{X}_{T},\overline{Z})\right],
\end{split}
\end{equation}
where \begin{equation}
	\begin{split}
		&f(T,\overline{X}_{T},\overline{Z}):=\frac{x_{0}}{T+1}+\int_{0}^{T}\left[\frac{\theta}{\left(T+1-s\right)^{2}}\left(\int_{s}^{T}\overline{Z}_{v}dv+\overline{X}_{T}\right)\right.\\&\left.-\frac{\theta}{T+1-s}\overline{Z}_{s}+\frac{1}{4}\left(\frac{\mu}{\sigma}\right)^{2}\beta\frac{1}{\left(T+1-s\right)^{2}}+\frac{3}{4}\left(\frac{\mu}{\sigma}\right)^{2}\beta\right]ds.
	\end{split}
\end{equation}
By the second equation of (\ref{general fixed}), we can decouple the fixed point system to obtain $\overline{X}_{T}$: 
\begin{equation}\label{decouple}
	\begin{split}
		\overline{X}_{T}&=\frac{x_{0}}{\left(1-\mathbb{E}\left[\theta\right]\right)T+1}+\frac{1}{4}\mathbb{E}\left[\left(\frac{\mu}{\sigma}\right)^{2}\beta\right]\left(\frac{3T^{2}+4T}{\left(1-\mathbb{E}\left[\theta\right]\right)T+1}\right)\\&+\frac{\mathbb{E}\left[\theta\right]\left(T+1\right)}{\left(1-\mathbb{E}\left[\theta\right]\right)T+1}\int_{0}^{T}\left(\frac{1}{(T+1-s)^{2}}\int_{s}^{T}\overline{Z}_{v}dv-\frac{1}{T+1-s}\overline{Z}_{s}\right)ds.
	\end{split}
\end{equation}
Substituting (\ref{decouple}) into (\ref{Cexp}), the fixed point system reduces to  
\begin{equation}\label{new fixed point}
	\begin{split}
		&d\overline{Z}_{t}=\left[\left(-\delta+\delta\mathbb{E}\left[\theta\right]\right)\overline{Z}_{t}+\delta G(t,\overline{Z})\right]dt, \\
		&Z_{0}=z_{0},
	\end{split}
\end{equation}
where
\begin{equation}\label{G}
	\begin{split}
		G(t,\overline{Z})&:=\frac{x_{0}}{T+1}\left(1-\frac{\mathbb{E}\left[\theta\right]}{\left(1-\mathbb{E}\left[\theta\right]\right)T+1}\right)+\int_{0}^{t}\frac{\mathbb{E}\left[\theta\right]}{\left(T+1-s\right)^{2}}\int_{s}^{T}\overline{Z}_{v}dvds\\&-\frac{\left(\mathbb{E}\left[\theta\right]\right)^{2}}{\left(1-\mathbb{E}\left[\theta\right]\right)T+1}\int_{0}^{T}\left(\frac{1}{(T+1-s)^{2}}\int_{s}^{T}\overline{Z}_{v}dv-\frac{1}{T+1-s}\overline{Z}_{s}\right)ds\\&-\int_{0}^{t}\frac{\mathbb{E}\left[\theta\right]}{T+1-s}\overline{Z}_{s}ds-\frac{\mathbb{E}\left[\theta\right]}{T+1-t}\int_{t}^{T}\overline{Z}_{s}ds+\frac{1}{4}\mathbb{E}\left[\left(\frac{\mu}{\sigma}\right)^{2}\beta\right]\left[T+1+2t-\frac{1}{T+1}\right]\\&-\frac{1}{4}\frac{\mathbb{E}\left[\theta\right]}{T+1}\mathbb{E}\left[\left(\frac{\mu}{\sigma}\right)^{2}\beta\right]\left(\frac{3T^{2}+4T}{\left(1-\mathbb{E}\left[\theta\right]\right)T+1}\right).
	\end{split}
\end{equation}
We then have the next result.
\begin{proposition}\label{prop2.3}
	There exists a unique deterministic fixed point $\overline{Z}\in\mathcal{C}_{T}$ to (\ref{new fixed point}), and hence $\left(\Pi^{*},C^{*}\right)$, defined via the strategy (\ref{strategy}),  is { the unique} mean field equilibrium.
\end{proposition}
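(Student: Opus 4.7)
The plan is to reduce the mean field equilibrium property to the unique solvability of the scalar functional ODE~(\ref{new fixed point}) on $\mathcal{C}_T$. Once a unique fixed point $\overline{Z}^{*}\in\mathcal{C}_T$ is produced, (\ref{decouple}) immediately determines the associated $\overline{X}_T^{*}\in\mathbb{R}$, and Lemma~\ref{lemma7.8} (whose uniqueness clause uses strict concavity of the exponential utility) then produces a unique feedback pair $(\Pi^{*},C^{*})$ best-responding to $(\overline{Z}^{*},\overline{X}_T^{*})$, which is by construction consistent with its own mean field via (\ref{fixexp}). Uniqueness of the MFE follows.

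For the fixed point itself, I would recast (\ref{new fixed point}) in integral form and study the operator $\mathcal{T}:\mathcal{C}_T\to\mathcal{C}_T$ given by
\begin{equation*}
(\mathcal{T}\overline{Z})(t):=z_0+\int_0^t\Bigl[\delta(\mathbb{E}[\theta]-1)\overline{Z}_s+\delta G(s,\overline{Z})\Bigr]\,ds,
\end{equation*}
where $G$ is defined by (\ref{G}). Inspection of (\ref{G}) shows that $G(s,\cdot)$ is an affine functional of $\overline{Z}$ whose kernels are of the form $(T+1-s)^{-1}$ and $(T+1-s)^{-2}$; since $T+1-s\ge 1$ on $[0,T]$, these are uniformly bounded by $1$. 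Consequently $\mathcal{T}$ is well defined and continuous on $(\mathcal{C}_T,\|\cdot\|_\infty)$, and by subtracting two trajectories one obtains a Lipschitz estimate of the shape
\begin{equation*}
\|\mathcal{T}\overline{Z}^{1}-\mathcal{T}\overline{Z}^{2}\|_\infty\le L(T,\delta,\mathbb{E}[\theta])\,\|\overline{Z}^{1}-\overline{Z}^{2}\|_\infty,
\end{equation*}
with $L$ scaling linearly in $T$ and tending to $0$ as $T\to 0$. For $L<1$ the Banach fixed point theorem yields a unique $\overline{Z}^{*}\in\mathcal{C}_T$ at once.

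The main technical obstacle is extending this contraction to arbitrary $T$. Unlike a purely Volterra problem, $\mathcal{T}$ inherits from $G$ a dependence on values of $\overline{Z}$ beyond the current instant through the terms $\int_s^T \overline{Z}_v\,dv$, $\int_t^T \overline{Z}_s\,ds$ and the outer $\int_0^T(\cdot)\,ds$, so the usual $e^{-\lambda t}$-weighted-norm device does not directly deliver a contraction. My approach would be to iterate the Picard operator: exploiting the affinity of $\mathcal{T}$ together with the extra factor of $t$ produced by the outer integral $\int_0^t(\cdot)\,ds$ at each iteration, one can show by induction that $\|\mathcal{T}^n\overline{Z}^{1}-\mathcal{T}^n\overline{Z}^{2}\|_\infty\le C_T^{n}/n!\cdot\|\overline{Z}^{1}-\overline{Z}^{2}\|_\infty$ up to a tail term from the $[t,T]$-portions that can be absorbed after one further iteration. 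Hence $\mathcal{T}^n$ is a strict contraction for $n$ large enough, producing a unique fixed point $\overline{Z}^{*}\in\mathcal{C}_T$ for all $T>0$. Together with (\ref{decouple}) and Lemma~\ref{lemma7.8} this establishes that the feedback $(\Pi^{*},C^{*})$ from (\ref{strategy}) is the unique mean field equilibrium.
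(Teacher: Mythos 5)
Your reduction of the proposition to the unique solvability of (\ref{new fixed point}) --- recovering $\overline{X}_T$ from (\ref{decouple}) and invoking the uniqueness of the best response from Lemma~\ref{lemma7.8} --- matches the paper, as does your operator $\mathcal{T}$ (it is exactly the paper's $\Phi$ in (\ref{Phi})) and the small-time Banach contraction with a Lipschitz constant vanishing as $T\to 0$. The divergence, and the problem, lies in your passage to arbitrary $T$. You correctly observe that $G$ is anticipating (through $\int_s^T\overline{Z}_v\,dv$, $\int_t^T\overline{Z}_s\,ds$ and the outer $\int_0^T$), so the equation is not of Volterra type; but your proposed remedy --- iterating $\mathcal{T}$ to get a bound of the form $C_T^n/n!$ ``up to a tail term that can be absorbed after one further iteration'' --- does not hold up. The factorial gain in Picard iteration comes precisely from the Volterra structure: the integrand at time $s$ is controlled by the sup of the previous difference over $[0,s]$, so each outer integration converts $s^{n-1}/(n-1)!$ into $t^n/n!$. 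Here the anticipating terms force you to bound the integrand at time $s$ by the sup of the previous difference over all of $[0,T]$, which yields only $|(\mathcal{T}^n\overline{Z}^1-\mathcal{T}^n\overline{Z}^2)(t)|\le L(T)^{n-1}\,\delta K t\,\|\overline{Z}^1-\overline{Z}^2\|_{\mathcal{C}_T}$: the full Lipschitz constant $L(T)$ is reproduced at every step rather than being beaten down. The tail term is not absorbed after one further iteration; it regenerates itself at full strength each time, so $\mathcal{T}^n$ contracts only when $L(T)<1$, which is exactly the small-$T$ regime you had already covered. This is a genuine gap.

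For comparison, the paper's route from local to global is a continuation argument: the contraction constant $C_2(\cdot)$ in (\ref{contraction}) depends only on the length of the time interval and not on the initial datum $z_0$, so the unique fixed point obtained on $[0,t_1]$ is extended to $[t_1,t_2]$ and so on until $[0,T]$ is exhausted. If you wish to repair your version, you would need either to adopt that stepwise extension, or to use the affine structure of $\mathcal{T}$ more carefully (e.g.\ via the explicit decoupling (\ref{decouple})) to show that the anticipating part of the associated linear operator has spectral radius strictly less than one; the crude $1/n!$ estimate is simply not available for a non-Volterra kernel.
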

\begin{proof}
Let us define $\Phi(\cdot,\cdot) $: for all $\left(t,Z\right)\in[0,T]\times\mathcal{C}_{T}$,
\begin{equation}\label{Phi}
	\Phi(t,Z):=z_{0}+\int_{0}^{t}\left[\left(-\delta+\delta\mathbb{E}\left[\theta\right]\right)Z_{s}+\delta G(s,Z)\right]ds.
\end{equation}
In light of (\ref{G}), for any $Z^{1}$, $Z^{2}\in\mathcal{C}_{T}$, we have 
\begin{equation}
	||G(\cdot,Z^{1})-G(\cdot,Z^{2})||_{\mathcal{C}_{T}}\leq C(T)||Z^{1}-Z^{2}||_{\mathcal{C}_{T}},
\end{equation}
where $T\rightarrow C(\cdot)$ is continuous and independent of $z_{0}$, and it satisfies $\lim_{T\rightarrow0}C(T)=0$. 
Then 
\begin{equation}\label{contraction}
	||\Phi(\cdot,Z^{1})-\Phi(\cdot,Z^{2})||_{\mathcal{C}_{T}}\leq C_{2}(T)||Z^{1}-Z^{2}||_{\mathcal{C}_{T}},
\end{equation}
where $C_{2}(T):=\left(\delta+\delta\mathbb{E}\left[\theta\right]\right)T+\delta TC(T)$ and  satisfies $\lim_{T\rightarrow0}C_{2}(T)=0$. We can choose $t_{1}\in(0,T]$  small enough such that $C_{2}(t_{1})\in(0,1)$. Thus, $\Phi$ is a contraction map on $\mathcal{C}_{t_{1}}$ by (\ref{contraction}), and there exists a unique fixed point of $\Phi$ on $[t_{0},t_{1}]$ with $t_{0}=0$. As $C_{2}(\cdot)$ is independent of $z_{0}$, we apply this similar argument to conclude that there exists a unique fixed point of $\Phi$ on $[t_{1},t_{2}]$ for some $t_{2}>t_{1}$. Repeating this procedure, we obtain the existence of a unique fixed point $\overline{Z}$ of $\Phi$ on $[0,T]$. Hence, $\left(\Pi^{*},C^{*}\right)$, defined via (\ref{strategy}), is a mean field equilibrium, where $\overline{Z}$ is the unique solution of (\ref{new fixed point}) and $\overline{X}_{T}$ is given by (\ref{decouple}).
\end{proof}
\subsection{Power Utility}
In the power utility case, the wealth process of a representative agent is
\begin{equation}\label{wealthrep.2power}
	\frac{dX_{t}}{X_{t}}=\pi_{t}\mu dt+\pi_{t}\sigma dW_{t}-c_{t}dt,\quad X_{0}=x_{0}.
\end{equation}
Now, the $\mathcal{F}_{0}$-measurable random type vector $o=(\mu,\sigma,p,\theta)$ has a discrete distribution $m$, such that $$m\left(o=o_{k}\right)=F\left(\left\{k\right\}\right)$$ for each $o_{k}=(\mu_{k},\sigma_{k},p_{k},\theta_{k})\in\mathcal{O}_{p}$. 

 Similarly, $\overline{Z}^{n}$ and $\overline{X}^{n}_{T}$ should be approximated by a deterministic function $\overline{Z}=(\overline{Z}_{t})_{t\in[0,T]}\in\mathcal{C}_{T,+}:=C([0,T];\mathbb{R}_{+})$ and a constant $\overline{X}_{T}\in\mathbb{R}_{+}$ respectively when $n$ is sufficiently large.

Given  $\overline{Z}\in\mathcal{C}_{T,+}$ and $\overline{X}_{T}\in\mathbb{R}_{+}$, the dynamic version of the objective function for the representative agent is 
\begin{equation}\label{obf.2power}
	\begin{split}
		J^{r}((\pi,c),t,x;\overline{Z},\overline{X}_{T}):=\mathbb{E}_{t,x}\left[\int_{t}^{T}\frac{\left(c_{s}X_{s}\right)^{p}}{p\left(\overline{Z}_{s}\right)^{p\theta}}ds+\frac{\left(X_{T}\right)^{p}}{p\left(\overline{X}_{T}\right)^{p\theta}}\right].
	\end{split}
\end{equation}
 The stochastic control problem is then given by 
\begin{equation}\label{scppower}
	\sup_{\left(\pi,c\right)\in\mathcal{A}_{t}^{r,p}}J^{r}((\pi,c),t,x;\overline{Z},\overline{X}_{T}) = \mathbb{E}_{m}\left[V(t,x,o)\right]:=\int_{\mathcal{O}}V(t,x,o)m(do).
\end{equation}
 The definition of mean field equilibrium can be obtained in a similar fashion.
\begin{definition}{(Mean Field Equilibrium)}\label{mfepower}
	
	For given $\overline{Z}\in\mathcal{C}_{T,+}$ and $\overline{X}_{T}\in\mathbb{R}_{+}$, let $(\pi^{*},c^{*})\in\mathcal{A}_{0}^{r,p}$ be the best response  to the stochastic control problem (\ref{scppower}) from the initial time $t=0$.    
	The control $(\pi^{*},c^{*})$ is called a mean field equilibrium, if it is the best response  to  itself in the sense that $\overline{Z}_{t} =z_{0}e^{-\delta t}+\int_{0}^{t}\delta e^{\delta (s-t)}\mathbb{E}\left[c^{*}_{s}X^{*}_{s}\right]ds,\ t\in[0,T]$ and $\overline{X}_{T} = \exp\left\{\mathbb{E}\left[\log X^{*}_{T}\right]\right\}$, where $X^{*}$ is the wealth process under the optimal control $(\pi^{*},c^{*})$ with $X^{*}_{0}=x_{0}$.
\end{definition}

Likewise, we shall first solve the stochastic control problem (\ref{scppower}) for given $\overline{Z}\in\mathcal{C}_{T,+}$ and $\overline{X}_{T}\in\mathbb{R}_{+}$.  Let $o=\left(\mu,\sigma,p,\theta\right)$ now represent a deterministic sample from its random type distribution $m$.  The associated HJB equation of value function $V^{r}(t,x):=V(t,x,o)$ is 
\begin{equation}\label{HJBpower}
	\partial_{t}V^{r}(t,x)+\sup_{\pi\in\mathbb{R}}\left(\mu\pi x\partial_{x}V^{r}+\frac{\sigma^{2}}{2}\pi^{2}x^{2}\partial^{2}_{x}V^{r}\right)+\sup_{c\geq0}\left(-cx\partial_{x}V^{r}+\frac{1}{p}c^{p}x^{p}\left(\overline{Z}_{t}\right)^{-\theta p}\right)=0
\end{equation}
with the terminal condition $V^{r}(T,x)=\frac{x^{p}}{p\left(\overline{X}_{T}\right)^{p\theta}}$ for all $x>0$. 
\begin{lemma}\label{lemmapower}
	Given $\overline{Z}=(\overline{Z}_{t})_{t\in[0,T]}\in\mathcal{C}_{T,+}$ and $\overline{X}_{T}>0$, the classical solution to the HJB equation (\ref{HJBpower}) admits the following closed-form 
	\begin{equation}
		V^{r}(t,x) = \frac{1}{p}x^{p}g(t),\quad t\in[0,T],
	\end{equation}
	where \begin{equation}
		g(t) = \left(e^{a\left(T-t\right)}\left(\frac{1}{\left(\overline{X}_{T}\right)^{p\theta}}\right)^{\frac{1}{1-p}}+e^{-at}\int_{t}^{T}e^{as}\left(\overline{Z}_{s}\right)^{\frac{\theta p}{p-1}}ds\right)^{1-p},   \quad a:=\frac{1}{2}\frac{\mu^{2}}{\sigma^{2}}\frac{p}{\left(1-p\right)^{2}}>0.
	\end{equation}
	{ There exists a unique optimal control to Problem (\ref{scppower}) and the feedback functions of the optimal investment and consumption  are given by}
	\begin{equation}\label{strategypower}
		\pi^{*}(t,x) = \frac{\mu}{(1-p)\sigma^{2}}, \quad c^{*}(t,x) =\left(\overline{Z}_{t}\right)^{\frac{\theta p}{p-1}}g(t)^{\frac{1}{p-1}},\quad t\in[0,T].
	\end{equation}
\end{lemma}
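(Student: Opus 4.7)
The plan is to follow the dynamic programming route as in the proof of Lemma~\ref{lemma7.8}, adapted to the multiplicative structure of the power case. First I would provisionally assume that a classical solution $V^{r}$ to the HJB equation~(\ref{HJBpower}) is strictly concave in $x$ (i.e.\ $\partial^{2}_{x}V^{r}<0$) and strictly increasing ($\partial_{x}V^{r}>0$), so that the first-order conditions are necessary and sufficient. They yield the feedback maps
\begin{equation*}
	\pi^{*}(t,x)=-\frac{\mu}{\sigma^{2}}\frac{\partial_{x}V^{r}(t,x)}{x\,\partial^{2}_{x}V^{r}(t,x)},\qquad c^{*}(t,x)=\bigl(x^{1-p}\overline{Z}_{t}^{\theta p}\,\partial_{x}V^{r}(t,x)\bigr)^{1/(p-1)},
\end{equation*}
the latter coming from differentiating $-cx\partial_{x}V^{r}+\tfrac{1}{p}c^{p}x^{p}\overline{Z}_{t}^{-\theta p}$ in $c$.

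Second, substituting these feedbacks back into (\ref{HJBpower}) and then inserting the ansatz $V^{r}(t,x)=\tfrac{1}{p}x^{p}g(t)$, which is dictated by the homogeneity of the power utility and the terminal condition $V^{r}(T,x)=\tfrac{1}{p}x^{p}\overline{X}_{T}^{-p\theta}$, reduces the PDE to a Bernoulli ODE for $g$,
\begin{equation*}
	\dot{g}(t)+a(1-p)\,g(t)+(1-p)\,\overline{Z}_{t}^{\theta p/(p-1)}\,g(t)^{p/(p-1)}=0,\qquad g(T)=\overline{X}_{T}^{-p\theta},
\end{equation*}
with $a=\tfrac{1}{2}\tfrac{\mu^{2}}{\sigma^{2}}\tfrac{p}{(1-p)^{2}}$. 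The substitution $h(t):=g(t)^{1/(1-p)}$ linearises this to $\dot{h}(t)=-a\,h(t)-\overline{Z}_{t}^{\theta p/(p-1)}$ with $h(T)=\overline{X}_{T}^{-p\theta/(1-p)}$; variation of parameters then gives
\begin{equation*}
	h(t)=e^{a(T-t)}\overline{X}_{T}^{-p\theta/(1-p)}+e^{-at}\int_{t}^{T}e^{as}\,\overline{Z}_{s}^{\theta p/(p-1)}\,ds,
\end{equation*}
which is exactly the quantity raised to the power $1-p$ in the stated formula for $g$. Since $\overline{Z}\in\mathcal{C}_{T,+}$ and $\overline{X}_{T}>0$, the integrand is nonnegative and $h(t)>0$ on $[0,T]$, so $g(t)>0$; this retroactively justifies $\partial^{2}_{x}V^{r}<0$ and $\partial_{x}V^{r}>0$, and plugging $\partial_{x}V^{r}=x^{p-1}g(t)$ into the feedback formulas collapses them to the $x$-independent expressions $\pi^{*}(t,x)=\mu/((1-p)\sigma^{2})$ and $c^{*}(t,x)=\overline{Z}_{t}^{\theta p/(p-1)}g(t)^{1/(p-1)}$ claimed in~(\ref{strategypower}).

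Third, a standard verification argument closes the loop. Under the feedback $(\pi^{*},c^{*})$, the wealth equation~(\ref{wealthrep.2power}) has deterministic, continuous, bounded coefficients on $[0,T]$, so $X^{*}$ is a strictly positive geometric It\^o process with finite moments of all orders, and hence $(\pi^{*},c^{*})\in\mathcal{A}^{r,p}_{t}$. For an arbitrary admissible $(\pi,c)\in\mathcal{A}^{r,p}_{t}$, I would apply It\^o's formula to $V^{r}(s,X_{s})$ on $[t,T]$, localise by a stopping sequence to handle the local martingale part, and combine the HJB inequality with the nonnegativity of the running reward and terminal utility to obtain $V^{r}(t,x)\geq J^{r}((\pi,c),t,x;\overline{Z},\overline{X}_{T})$ via Fatou's lemma; equality holds along $(\pi^{*},c^{*})$ by the explicit calculation. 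Uniqueness of the maximiser follows from strict concavity of the Hamiltonian: the supremum in $\pi$ is a strictly concave quadratic (since $\partial^{2}_{x}V^{r}<0$) and the supremum in $c$ is strictly concave on $\mathbb{R}_{+}$ for $p\in(0,1)$.

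I anticipate the verification step to be the main obstacle, since, in contrast with the exponential case, admissibility imposes the no-bankruptcy constraint $X_{s}\geq0$ and $c_{s}\geq0$, so one cannot use an a priori moment bound like (\ref{estimate}) uniformly over all admissible controls. The saving grace is $p\in(0,1)$, which makes $x\mapsto x^{p}$ sublinear and allows the localised expectation of the candidate value function to be controlled and the local martingale part to vanish in the limit, so that Fatou's lemma yields the required inequality.
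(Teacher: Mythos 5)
Your proposal is correct and follows exactly the route the paper intends: the paper omits this proof, stating only that it is ``similar to the proof of Lemma \ref{lemma7.8}'', and your argument is precisely that dynamic-programming scheme (first-order conditions, the homogeneous ansatz $V^{r}=\tfrac{1}{p}x^{p}g(t)$, reduction to a Bernoulli ODE linearised by $h=g^{1/(1-p)}$, and a verification step), with all the algebra -- the ODE $\dot{g}+a(1-p)g+(1-p)\overline{Z}_{t}^{\theta p/(p-1)}g^{p/(p-1)}=0$, the linearisation, and the resulting formulas for $g$, $\pi^{*}$, $c^{*}$ -- checking out. Your closing remark on handling the no-bankruptcy constraint in the verification via sublinearity of $x^{p}$ and Fatou is a sensible addition that the paper leaves implicit.
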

\begin{proof}
The proof is similar to the proof of  Lemma \ref{lemma7.8}, and we omit it here.
\end{proof}
Let $X^{*}=\left\{X^{*}_{t},0\leq t\leq T\right\}$ be the wealth process under the optimal investment-consumption strategy in (\ref{strategypower}):
\begin{equation}\label{apower}
	\frac{dX^{*}_{t}}{X^{*}_{t}}=\pi^{*}_{t}\mu dt+\pi^{*}_{t}\sigma dW_{t}-c^{*}_{t}dt,\quad X^{*}_{0}=x_{0}.
\end{equation}
 The consistency conditions for $\overline{Z}$ and $\overline{X}_{T}$ are
\begin{equation}
	\begin{split}
		&\overline{Z}_{t}=e^{-\delta t}\left\{z_{0}+\int_{0}^{t}\delta e^{\delta s}\mathbb{E}\left[c^{*}_{s}X^{*}_{s}\right]ds\right\},\quad t\in[0,T], \\
		&\overline{X}_{T} = \exp\left\{\mathbb{E}\left[\log X^{*}_{T}\right]\right\},
	\end{split}
\end{equation}
which are equivalent to 
\begin{equation}\label{orgfixpower}
	\begin{split}
		&d\overline{Z}_{t}=-\delta\left\{\overline{Z}_{t}-\mathbb{E}\left[c^{*}_{t}X^{*}_{t}\right]\right\}dt, \quad t\in[0,T],\,\, Z_{0}=z_{0},\\
		&\overline{X}_{T} = \exp\left\{\mathbb{E}\left[\log X^{*}_{T}\right]\right\}.
	\end{split}
\end{equation}
For a given $\left(\overline{X}_{T},\overline{Z}\right)\in\mathbb{R}_{+}\times\mathcal{C}_{T,+}$, it follows that
\begin{equation}
	\begin{split}
		&X^{*}_{t}=x_{0}\exp\left\{\int_{0}^{t}\left(\pi^{*}_{s}\mu-\frac{\sigma^{2}}{2}\left(\pi^{*}_{s}\right)^{2}-c^{*}_{s}\right)ds+\pi^{*}_{t}\sigma W_{t}\right\},\\
		&\log X^{*}_{t}=\log x_{0} + \int_{0}^{t}\left(\pi^{*}_{s}\mu-\frac{\sigma^{2}}{2}\left(\pi^{*}_{s}\right)^{2}-c^{*}_{s}\right)ds+\pi^{*}_{t}\sigma W_{t}.
	\end{split}
\end{equation}
Therefore, 
\begin{equation*}
	\begin{split}
		&f_{i}(t,\overline{X}_{T},\overline{Z}):=\mathbb{E}\left[X^{*}_{t}|o=o_{i}\right]=x_{0}\exp\left\{\int_{0}^{t}\left(\frac{\mu_{i}^{2}}{(1-p_{i})\sigma_{i}^{2}}-\left(\overline{Z}_{s}\right)^{\frac{\theta_{i}p_{i}}{p_{i}-1}}G_{i}(s,\overline{X}_{T},\overline{Z})\right)ds\right\},\\
		&\mathbb{E}\left[\log X^{*}_{t}|o=o_{i}\right]=\log x_{0}+\int_{0}^{t}\left(\frac{\mu_{i}^{2}}{2\sigma_{i}^{2}}\frac{\left(1-2p_{i}\right)}{(1-p_{i})^{2}}-\left(\overline{Z}_{s}\right)^{\frac{\theta_{i}p_{i}}{p_{i}-1}}G_{i}(s,\overline{X}_{T},\overline{Z})\right)ds,
	\end{split}
\end{equation*}
where \begin{equation*}
	\begin{split}
		&G_{i}(t,\overline{X}_{T},\overline{Z}): = \left(e^{a_{i}\left(T-t\right)}\left(\frac{1}{\left(\overline{X}_{T}\right)^{p_{i}\theta_{i}}}\right)^{\frac{1}{1-p_{i}}}+e^{-a_{i}t}\int_{t}^{T}e^{a_{i}s}\left(\overline{Z}_{s}\right)^{\frac{\theta_{i} p_{i}}{p_{i}-1}}ds\right)^{-1},\\ &a_{i}:=\frac{1}{2}\frac{\mu_{i}^{2}}{\sigma_{i}^{2}}\frac{p_{i}}{\left(1-p_{i}\right)^{2}}.
	\end{split}
\end{equation*}
Thus, we have from (\ref{orgfixpower})
\	\begin{equation}\label{general fixedpower}
	\begin{split}
		&d\overline{Z}_{t}=-\delta\left\{\overline{Z}_{t}-\mathbb{E}_{m}\left[\left(\overline{Z}_{t}\right)^{\frac{\theta p}{p-1}}	G(t,\overline{X}_{T},\overline{Z})f(t,\overline{X}_{T},\overline{Z})\right]\right\}dt, \quad t\in[0,T],\,\, Z_{0}=z_{0},\\
		&\overline{X}_{T} =x_{0}\exp\left\{\int_{0}^{T}\mathbb{E}_{m}\left[\frac{\mu^{2}}{2\sigma^{2}}\frac{\left(1-2p\right)}{(1-p)^{2}}-\left(\overline{Z}_{s}\right)^{\frac{\theta p}{p-1}}	G(s,\overline{X}_{T},\overline{Z})\right]ds\right\},
	\end{split}
\end{equation}
where \begin{equation}
	\begin{split}
		&G(t,\overline{X}_{T},\overline{Z}):= \left(e^{a\left(T-t\right)}\left(\frac{1}{\left(\overline{X}_{T}\right)^{p\theta}}\right)^{\frac{1}{1-p}}+e^{-at}\int_{t}^{T}e^{as}\left(\overline{Z}_{s}\right)^{\frac{\theta p}{p-1}}ds\right)^{-1},   \\
		&f(t,\overline{X}_{T},\overline{Z}):=x_{0}\exp\left\{\int_{0}^{t}\left(\frac{\mu^{2}}{(1-p)\sigma^{2}}-\left(\overline{Z}_{s}\right)^{\frac{\theta p}{p-1}}G(t,\overline{X}_{T},\overline{Z})\right)ds\right\}.
	\end{split}
\end{equation}
 Unlike the exponential utility case, we can not  decoupled the fixed point system (\ref{general fixedpower}) which makes it more complicated to prove the existence of fixed points.

Given a positive constant $\epsilon$ and a positive continuous function $M(t)$, define 
\begin{equation}
	\begin{split}
		\mathcal{C}_{T,\epsilon,M(\cdot)}:=\left\{\overline{Z}\in\mathcal{C}_{T}: \epsilon\leq\overline{Z}_{t}\leq M(t),\,\, \forall t\in[0,T] \right\}.
	\end{split}
\end{equation}

The next proposition gives the existence of the fixed point of (\ref{general fixedpower}) over a time interval of arbitrarily prescribed length.

\begin{proposition}\label{prop2.3power}
	The system (\ref{general fixedpower}) has a solution $\left(\overline{X}_{T},\overline{Z}\right)\in\mathbb{R}_{+}\times\mathcal{C}_{T,+}$, and hence $\left(\pi^{*},c^{*}\right)$, defined via the strategy (\ref{strategypower}),  is a mean field equilibrium.
\end{proposition}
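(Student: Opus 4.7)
The plan is to apply Schauder's fixed point theorem, as flagged in the introduction. I would recast the system (\ref{general fixedpower}) as a self-map $\Psi$ on a carefully chosen convex compact subset $\mathcal{K}$ of the Banach space $\mathbb{R}\times C([0,T];\mathbb{R})$ equipped with the norm $|x|+\|Z\|_{\infty}$. Given an input $(\overline{X}_{T},\overline{Z})\in\mathcal{K}$, define $\Psi(\overline{X}_{T},\overline{Z}):=(\overline{X}_{T}',\overline{Z}')$ by plugging the input into the right-hand side of (\ref{general fixedpower}): namely, $\overline{Z}'$ is the solution of the linear ODE
\begin{equation*}
\frac{d\overline{Z}'_{t}}{dt}=-\delta\overline{Z}'_{t}+\delta\,\mathbb{E}_{m}\!\left[(\overline{Z}_{t})^{\frac{\theta p}{p-1}}G(t,\overline{X}_{T},\overline{Z})\,f(t,\overline{X}_{T},\overline{Z})\right],\quad \overline{Z}'_{0}=z_{0},
\end{equation*}
while $\overline{X}_{T}'$ is given by the explicit exponential formula. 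A fixed point of $\Psi$ is then, by Lemma \ref{lemmapower} and Definition \ref{mfepower}, exactly a mean field equilibrium of the form (\ref{strategypower}).

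The candidate set is $\mathcal{K}=[L,U]\times\mathcal{C}_{T,\epsilon,M(\cdot)}$ for constants $0<L<U$, $\epsilon>0$ and a continuous function $M(\cdot)$ to be calibrated. The lower bound $\overline{Z}'_{t}\ge z_{0}e^{-\delta T}$ is immediate from the ODE since the driver is nonnegative, so $\epsilon:=z_{0}e^{-\delta T}$ works regardless of the input. For the upper bounds, I would exploit that $p\in(0,1)$ makes the exponent $\theta p/(p-1)$ negative, so $(\overline{Z}_{t})^{\theta p/(p-1)}\le \epsilon^{\theta p/(p-1)}$ on $\mathcal{K}$; similarly $G$ is bounded uniformly in $\mathcal{K}$ because the denominator in its definition is bounded below by the first term, which is itself bounded away from zero once $\overline{X}_{T}\le U$. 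Plugging these bounds into $f$ gives an $\overline{X}_{T}$-independent growth estimate for $|X^{*}_{t}|$, and then a Grönwall-type bound supplies a continuous $M(\cdot)$ such that $\overline{Z}'_{t}\le M(t)$. The same uniform bounds fed into the exponential formula for $\overline{X}_{T}'$ give the interval $[L,U]$. These four constants/functions must be chosen consistently, which is essentially a bootstrap on the coupling.

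Once invariance $\Psi(\mathcal{K})\subset\mathcal{K}$ holds, continuity of $\Psi$ with respect to the chosen norm follows from continuous dependence of $G$ and $f$ on $(\overline{X}_{T},\overline{Z})$ on bounded sets (the negative exponent on $\overline{Z}$ is harmless because $\overline{Z}$ is bounded away from zero), together with the continuous dependence of solutions of the linear ODE on its parameters. Relative compactness of $\Psi(\mathcal{K})$ in the $\mathcal{C}_{T}$-component follows from Arzel\`a--Ascoli: the right-hand side of the ODE is uniformly bounded on $\mathcal{K}$, so the family $\{\overline{Z}'\}$ is uniformly Lipschitz and hence equicontinuous; the $\mathbb{R}$-component image lies in the compact interval $[L,U]$. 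Schauder's fixed point theorem then produces a fixed point $(\overline{X}_{T},\overline{Z})\in\mathbb{R}_{+}\times\mathcal{C}_{T,+}$, and the strategy (\ref{strategypower}) built from it is a mean field equilibrium.

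The main obstacle will be the calibration of $\epsilon,M(\cdot),L,U$ in a single internally consistent choice, because the system is strongly coupled: $\overline{X}_{T}$ enters $G$, which drives $\overline{Z}'$, while $\overline{Z}$ enters the exponential defining $\overline{X}_{T}'$. The negative power $\theta p/(p-1)<0$ also means that any self-map argument is delicate near $\overline{Z}=0$, so the lower bound $\epsilon$ must be installed before the upper bound, as I did above. Unlike the exponential case in Proposition \ref{prop2.3}, uniqueness is not claimed, which is consistent with the inherently nonlinear, non-decoupled nature of (\ref{general fixedpower}).
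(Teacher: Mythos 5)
Your proposal is correct and follows essentially the same route as the paper: recast (\ref{general fixedpower}) as a self-map of a set of the form $[L,U]\times\mathcal{C}_{T,\epsilon,M(\cdot)}$, derive the a priori bounds in the non-circular order $\epsilon$, $U$, $M(\cdot)$, $L$ (the paper's $z_{0}$, $C_{2}$, $M(t)=EKt+z_{0}$, $C_{0}$), and apply Schauder via uniform boundedness of the driver and Arzel\`a--Ascoli. The only cosmetic difference is that the paper first substitutes $\hat{Z}_{t}:=e^{\delta t}\overline{Z}_{t}$ to eliminate the linear decay term, which yields the lower bound $z_{0}$ directly instead of your $z_{0}e^{-\delta T}$; this changes nothing essential.
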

\begin{proof}
	Define 
	\begin{equation}
		\hat{Z}_{t}:=\exp\left(\delta t\right)\overline{Z}_{t}, t\in[0,T].
	\end{equation}
	Then, we have from (\ref{general fixedpower}) 
	\begin{equation}\label{new fixedpower}
		\begin{split}
			&d\hat{Z}_{t}=\delta\sum_{k=1}^{K}e^{\beta_{k}t}\left(\hat{Z}_{t}\right)^{\frac{\theta_{k}p_{k}}{p_{k}-1}}\hat{G}_{k}(t,\overline{X}_{T},\hat{Z})\hat{f}_{k}(t,\overline{X}_{T},\hat{Z})F(\left\{k\right\})dt,\quad \forall t\in[0,T],\\ &\hat{Z}_{0}=z_{0},\\
			&\overline{X}_{T}=C_{2}\exp\left\{-\int_{0}^{T}\sum_{k=1}^{K}\exp\left(\frac{\theta_{k} p_{k}\delta}{1-p_{k}}t\right)\left(\hat{Z}_{t}\right)^{\frac{\theta_{k}p_{k}}{p_{k}-1}}\hat{G}_{k}(t,\overline{X}_{T},\hat{Z})dt\right\},
		\end{split}
	\end{equation}
	where \begin{equation}
		\begin{split}\label{7.17power}
			&\beta_{k}:=\frac{\delta }{1-p_{k}}\left(1+\theta_{k}p_{k}-p_{k}\right),\\
			&C_{2}:=x_{0}\exp\left\{\sum_{k=1}^{K}F(\left\{k\right\})\frac{\mu_{k}^{2}}{2\sigma_{k}^{2}}\frac{\left(1-2p_{k}\right)}{(1-p_{k})^{2}}T\right\},
		\end{split}
	\end{equation}
	and
	\begin{equation*}
		\begin{split}
			\hat{G}_{k}(t,\overline{X}_{T}&,\hat{Z})\\:=&\left(e^{a_{k}\left(T-t\right)}\left(\frac{1}{\left(\overline{X}_{T}\right)^{p_{k}\theta_{k}}}\right)^{\frac{1}{1-p_{k}}}+e^{-a_{k}t}\int_{t}^{T}e^{a_{k}s}\exp\left\{\frac{\theta_{k}p_{k}\delta}{1-p_{k}}s\right\}\left(\hat{Z}_{s}\right)^{\frac{\theta_{k}p_{k}}{p_{k}-1}}ds\right)^{-1}, \\
			\hat{f}_{k}(t,\overline{X}_{T}&,\hat{Z})\\:=&x_{0}\exp\left(\frac{\mu_{k}^{2}}{(1-p_{k})\sigma_{k}^{2}}t\right)\exp\left\{-\int_{0}^{t}\exp\left(\frac{\theta_{k} p_{k}\delta}{1-p_{k}}s\right)\left(\hat{Z}_{s}\right)^{\frac{\theta_{k}p_{k}}{p_{k}-1}}\hat{G}_{k}(s,\overline{X}_{T},\hat{Z})ds\right\}.
		\end{split}
	\end{equation*}
	Now, it is enough to consider the system (\ref{new fixedpower}). First, let us introduce the following functions:
	\begin{equation}
		\begin{split}
			M(t):=EKt+z_{0},
		\end{split}
	\end{equation}
	where $E$ is given by 
	\begin{equation*}
		E:=\delta x_{0}\max_{1\leq k\leq K}e^{-a_{k}T}z_{0}^{\frac{\theta_{k}p_{k}}{p_{k}-1}}C_{2}^{\frac{p_{k}\theta_{k}}{1-p_{k}}}\left(\max_{t\in[0,T]}\exp\left\{\left(\beta_{k}+a_{k}+\frac{\mu_{k}^{2}}{(1-p_{k})\sigma_{k}^{2}}\right)t\right\}\right).
	\end{equation*}
	In  Appendix \ref{5.26.4}, we will show that $M(\cdot)$ is a dominating function of $\hat{Z}$, i.e., $\hat{Z}_{t}\leq M(t)$ for all $t\in[0,T]$.

	For any $\left(\overline{X}_{T}, \hat{Z}\right)\in[C_{0},C_{2}]\times\mathcal{C}_{T,z_{0},M(\cdot)}$, where $C_{0}$ is defined in (\ref{6.21}), let us define $$\Phi\left(\overline{X}_{T}, \hat{Z}\right):=\left(\Phi_{1}	\left(\overline{X}_{T}, \hat{Z}\right),\Phi_{2}	\left(\cdot,\overline{X}_{T}, \hat{Z}\right)\right),$$
	where
	\begin{equation}
		\begin{split}
			&\Phi_{1}\left(\overline{X}_{T}, \hat{Z}\right):=C_{2}\exp\left\{-\int_{0}^{T}\sum_{k=1}^{K}\exp\left(\frac{\theta_{k} p_{k}\delta}{1-p_{k}}t\right)\left(\hat{Z}_{t}\right)^{\frac{\theta_{k}p_{k}}{p_{k}-1}}\hat{G}_{k}(t,\overline{X}_{T},\hat{Z})dt\right\},\\
			&\Phi_{2}\left(\cdot,\overline{X}_{T}, \hat{Z}\right):=z_{0}+\int_{0}^{\cdot}\phi(s,\overline{X}_{T},\hat{Z})ds,
		\end{split}
	\end{equation}
	where \begin{equation*}
		\phi(s,\overline{X}_{T},\hat{Z}):=\delta\sum_{k=1}^{K}e^{\beta_{k}s}\left(\hat{Z}_{s}\right)^{\frac{\theta_{k}p_{k}}{p_{k}-1}}\hat{G}_{k}(s,\overline{X}_{T},\hat{Z})\hat{f}_{k}(s,\overline{X}_{T},\hat{Z})F(\left\{k\right\}).
	\end{equation*}
 We claim: $\Phi(\overline{X}_{T},\hat{Z})\in[C_{0},C_{2}]\times\mathcal{C}_{T,z_{0},M(\cdot)}$. First of all, it is obvious that $\Phi_{1}\left(\overline{X}_{T}, \hat{Z}\right)\leq C_{2}$   and $\Phi_{2}\left(t,\overline{X}_{T}, \hat{Z}\right)\geq z_{0}$ for all $t\in[0,T]$. Similar to the proof of Proposition \ref{5.26.5}, we obtain 
	\begin{equation*}
		\begin{split}
			\Phi_{2}\left(t,\overline{X}_{T}, \hat{Z}\right)&\leq z_{0}+\delta x_{0} \sum_{k=1}^{K}e^{-a_{k}T}z_{0}^{\frac{\theta_{k}p_{k}}{p_{k}-1}}C_{2}^{\frac{p_{k}\theta_{k}}{1-p_{k}}}\int_{0}^{t}\exp\left\{\left(\beta_{k}+a_{k}+\frac{\mu_{k}^{2}}{(1-p_{k})\sigma_{k}^{2}}\right)s\right\}ds\\&\leq z_{0}+EKt=M(t),\quad \forall t\in [0,T].
		\end{split}
	\end{equation*}
Using the same method as in (\ref{6.21}), we obtain
	\begin{equation*}
		\begin{split}
			\Phi_{1}\left(\overline{X}_{T}, \hat{Z}\right)&\geq
			C_{2}\exp\left\{-\int_{0}^{T}\sum_{k=1}^{K}\exp\left(\frac{\theta _{k}p_{k}\delta}{1-p_{k}}s\right)z_{0}^{\frac{\theta_{k}p_{k}}{p_{k}-1}}\right.\\&\left.\left(\int_{s}^{T}e^{a_{k}(v-s)}\exp\left(\frac{\theta_{k}p_{k}\delta}{1-p_{k}}v\right)\left(M(T)\right)^{\frac{\theta_{k}p_{k}}{p_{k}-1}}dv\right)^{-1}ds\right\}\\&=C_{0},
		\end{split}
	\end{equation*}
   and the claim follows.
	
	 We aim to use Schauder's fixed point theorem to show that there exists at least one solution $\left(\overline{X}_{T}, \hat{Z}\right)\in[C_{0},C_{2}]\times\mathcal{C}_{T,z_{0},M(\cdot)}$ to the system (\ref{new fixedpower}).
	
	First, we shall prove $\Phi$ is continuous. Indeed, it is straightforward to verify that $\left\{\hat{G}_{k}\right\}_{k=1}^{K}$, $\left\{\hat{f}_{k}\right\}_{k=1}^{K}$ and $\phi$ are (uniformly) bounded. Then, for a given sequence $\left(\overline{X}^{n}_{T}, \hat{Z}^{n}\right)$ converging to some $\left(\overline{X}_{T}, \hat{Z}\right)$ in the sense of 
	\begin{equation}
		\vert\overline{X}^{n}_{T}-\overline{X}_{T}\vert+\vert\vert\hat{Z}^{n}-\hat{Z}\vert\vert_{\mathcal{C}_{T}}\rightarrow0,
	\end{equation}
	it is not hard to show 
	\begin{equation}
		\vert\Phi_{1}(\overline{X}^{n}_{T},\hat{Z}^{n})-\Phi_{1}(\overline{X}_{T},\hat{Z})\vert+\vert\vert\Phi_{2}(\cdot,\overline{X}^{n}_{T},\hat{Z}^{n})-\Phi_{2}(\cdot,\overline{X}_{T},\hat{Z})\vert\vert_{\mathcal{C}_{T}}\rightarrow0.
	\end{equation}
	
	Next, for a given bounded subset $Y_{1}\times Y_{2}\in[C_{0},C_{2}]\times\mathcal{C}_{T,z_{0},M(\cdot)}$, we shall prove that $\Phi( Y_{1},Y_{2})$ is relatively compact. It is enough to show that $\Phi_{2}$ is equicontinuous. In fact, for any given $\left(\overline{X}_{T},\hat{Z}\right)\in[C_{0},C_{2}]\times\mathcal{C}_{T,z_{0},M(\cdot)}$ and $s<t$, we have 
	\begin{equation}
		\begin{split}
			\left|\Phi_{2}\left(t,\overline{X}_{T}, \hat{Z}\right)-\Phi_{2}\left(s,\overline{X}_{T}, \hat{Z}\right)\right|&=\left|\int_{s}^{t}\phi(s,\overline{X}_{T},\hat{Z})ds\right|\\&\leq\int_{s}^{t}\left|\phi(s,\overline{X}_{T},\hat{Z})\right|ds,
		\end{split}
	\end{equation}
	where $\left|\phi(s,\overline{X}_{T},\hat{Z})\right|$ is bounded by 
	\begin{equation}
		\left|\phi(s,\overline{X}_{T},\hat{Z})\right|\leq EK, \quad \forall s\in[0,T],
	\end{equation}
	which completes the proof.
\end{proof}
\begin{remark}
	Proposition \ref{prop2.3power} does not guarantee the uniqueness of the fixed point.
	However, it is not hard to get 
	\begin{equation*}
		\begin{split}||&\Phi(\cdot,\overline{X}^{1}_{T}, \hat{Z}^{1})-\Phi(\cdot,\overline{X}^{2}_{T}, \hat{Z}^{2})||=
			||\Phi_{2}(\cdot,\overline{X}^{1}_{T}, \hat{Z}^{1})-\Phi_{2}(\cdot,\overline{X}^{2}_{T}, \hat{Z}^{2})||_{\mathcal{C}_{T}}\\&+\left|\Phi_{1}\left(\overline{X}^{1}_{T}, \hat{Z}^{1}\right)-\Phi_{1}\left(\overline{X}^{2}_{T}, \hat{Z}^{2}\right)\right|\leq C(T)\left(||\hat{Z}^{1}-\hat{Z}^{2}||_{\mathcal{C}_{T}}+\left|\overline{X}^{1}_{T}-\overline{X}^{2}_{T}\right|\right),
		\end{split}
	\end{equation*}
	where $C(T)$ is a positive continuous function satisfying that $\lim_{T\rightarrow0}C(T)=0$. Therefore, the existence and uniqueness should hold for short time. Due to the forward-backward-like property of system  (\ref{general fixedpower}), a local solution can not be extended to a global solution as we does in the exponential case.
\end{remark}

\section{Approximate Nash Equilibrium in N-agent Games}\label{sect4}
In this section, we will construct and verify an approximate Nash equilibrium using the mean field equilibrium for both exponential and power cases.  Let us introduce
\begin{equation}
\epsilon_{n}:= \sup_{1\leq k\leq K}|F_{n}(\left\{k\right\})-F(\left\{k\right\})|,
\end{equation}
which measures the gap between the empirical distribution $F_{n}$ and its limit $F$. In order to derive the explicit convergence rate in terms of $n$, we need the following assumption:
\begin{assumption}\label{assump4}
	Assume that  $\epsilon_{n}=O(\frac{1}{\sqrt{n}})$ when $n$ is sufficient large.
\end{assumption}
\subsection{Exponential Utility}
 For $i\in\left\{1,\cdots,n\right\}$, we recall that the objective function of agent $i$ with exponential utility can be rewritten as:
\begin{equation}
		J_{i}\left((\Pi^{i},C^{i}),(\mathbf{\Pi},\mathbf{C})^{-i}\right):=\mathbb{E}\left[\int_{0}^{T}U_{\alpha(i)}\left(C^{i}_{t}-\theta_{\alpha(i)}\overline{Z}^{n}_{t}\right)dt+U_{\alpha(i)}\left(X^{i}_{T}-\theta_{\alpha(i)}\overline{X}_{T}^{n}\right)\right],
\end{equation}
where the vector $(\mathbf{\Pi},\mathbf{C})^{-i}$ is defined by 
\begin{equation}
	(\mathbf{\Pi},\mathbf{C})^{-i}:=\left((\Pi^{1}, C^{1}),\cdots,(\Pi^{i-1},C^{i-1}),(\Pi^{i+1},C^{i+1}),\cdots,(\Pi^{n},C^{n})\right).
\end{equation}
The definition of an approximate Nash equilibrium is given as follows.
\begin{definition}{(Approximate Nash equilibrium)}\label{Appnash}
	Let $\mathcal{A}^{e}:=\prod_{i=1}^{n}\mathcal{A}_{0}^{i,e}$. An $n$-tuple of admissible controls $\left(\mathbf{\Pi}^{*},\mathbf{C}^{*}\right):=\left((\Pi^{*,i},C^{*,i})\right)_{i=1}^{n}\in\mathcal{A}^{e}$ is called an $\epsilon$-Nash equilibrium to the n-agent game, if for any $(\Pi^{i},C^{i})\in\mathcal{A}^{i,e}_{0}$, it holds that 
	\begin{equation}\label{appnash}
	J_{i}\left((\Pi^{i},C^{i}),(\mathbf{\Pi}^{*},\mathbf{C}^{*})^{-i}\right)\leq J_{i}\left(\mathbf{\Pi}^{*},\mathbf{C}^{*}\right)+\epsilon,\quad\forall i\in\left\{1,\cdots,n\right\}.
	\end{equation}
\end{definition}
For $i\in\left\{1,\cdots,n\right\}$, let us construct the following candidate investment and consumption strategy $\left(\Pi^{*,i},C^{*,i}\right)$: 
\begin{equation}\label{candi}
	\begin{split}
		&\Pi^{*,i}(t):=\beta_{\alpha(i)}\frac{\mu_{\alpha(i)}}{\sigma^{2}_{\alpha(i)}}(T+1-t) ,\\
		 &C^{*,i}(t,x):=\frac{(x-\theta_{\alpha(i)}\overline{X}_{T})}{T+1-t}+\theta_{\alpha(i)}\overline{Z}_{t}
-\frac{\theta_{\alpha(i)}}{T+1-t}\int_{t}^{T}\overline{Z}_{s}ds\\&+\frac{1}{4}\beta_{\alpha(i)}
\left(\frac{\mu_{\alpha(i)}}{\sigma_{\alpha(i)}}\right)^{2}\left((T+1-t)-\frac{1}{T+1-t}\right),
	\end{split}
\end{equation}
where $\left(\overline{Z},\overline{X}_{T}\right)$ is the fixed point of (\ref{orgfix}). Denote by $X^{*,i}$ the wealth process of agent $i$ under the candidate strategy $\left(\Pi^{*,i},C^{*,i}\right)$ in (\ref{candi}):
\begin{equation}\label{wealth}
	dX^{*,i}_{t}=\Pi^{*,i}(t)\mu_{\alpha(i)}dt+\Pi^{*,i}(t)\sigma_{\alpha(i)}dW^{i}_{t}-C^{*,i}(t,X^{*,i}_{t})dt, \quad X^{*,i}_{0}=x_{0}.
\end{equation}
 The $i$-th agent's habit formation process is then given by 
\begin{equation}\label{z}
		Z^{*,i}_{t} = e^{-\delta t}\left(z_{0}+\int_{0}^{t}\delta e^{\delta s}C^{*,i}(s,X^{*,i}_{s})ds\right),\quad t\in[0,T].
\end{equation}
Denote by
\begin{equation}\label{ZX}
	 \overline{Z}_{t}^{*,n}:=\frac{1}{n}\sum_{i=1}^{n}Z^{*,i}_{t},\quad\overline{X}^{*,n}_{T}:=\frac{1}{n}\left(\sum_{i=1}^{n}X^{*,i}_{T}\right).
 \end{equation}
It follows that \begin{equation}\label{Z}
	\overline{Z}^{*,n}_{t}:=e^{-\delta t}z_{0}+\frac{1}{n}\sum_{i=1}^{n}\int_{0}^{t}\delta e^{\delta(s-t)}C^{*,i}(s,X^{*,i}_{s})ds,\,\ t\in[0,T].
\end{equation}
For technical convenience and ease of presentation, we make an additional assumption on $\mathcal{A}^{e}$ in this section:
\begin{assumption}\label{assump5}
 We assume that the $n$-tuple of admissible controls $\left(\Pi^{i},C^{i}\right)_{i=1}^{n}$ satisfies the following integrability conditions: for all $i\in\left\{1,\cdots,n\right\}$,
	\begin{equation*}
		\begin{split}
			&\mathbb{E}\left[\int_{0}^{T}\left|U_{\alpha(i)}\left(C^{i}_{t}-\theta_{\alpha(i)}\overline{Z}^{*,n}_{t}\right)\right|^{2}dt+\left|U_{\alpha(i)}\left(X^{i}_{T}-\theta_{\alpha(i)}\overline{X}_{T}^{*,n}\right)\right|^{2}\right]<M<\infty,\\
			&	\mathbb{E}\left[\int_{0}^{T}\left|U_{\alpha(i)}\left(C^{i}_{t}\right)\right|^{2}dt+\left|U_{\alpha(i)}\left(X^{i}_{T}\right)\right|^{2}\right]<M<\infty,
		\end{split}
	\end{equation*}
where $M$ is a constant independent of $n$.
\end{assumption}
\begin{remark}
Although the above assumption seems to be strong, it does cover a large class of controls.
In fact, it is straightforward to verify that if  $\left(\Pi^{i},C^{i}\right)_{i=1}^{n}$ is the outcome of a deterministic-affine strategy, i.e., $\left(\Pi^{i}_{t},C^{i}_{t}\right)_{i=1}^{n}=\left(\tilde{\Pi}^{i}(t),\tilde{C}^{i}(t,X^{i}_{t})\right)_{i=1}^{n}$ for all $t\in[0,T]$, where $\tilde{\Pi}^{i}:[0,T]\rightarrow\mathbb{R}$ is a continuous function and $\tilde{C}:[0,T]\times\mathbb{R}\rightarrow\mathbb{R}$ is an affine function with respect to $x\in\mathbb{R}$, then $\left(\Pi^{i},C^{i}\right)_{i=1}^{n}$ satisfies the Assumption \ref{assump5}, see Appendix \ref{appendixcara}. In particular, the candidate strategies $\left(\Pi^{*,i},C^{*,i}\right)_{i=1}^{n}$ satisfies the Assumption \ref{assump5}.  
\end{remark}

Next, we introduce the main result of this section on the existence of an approximate Nash equilibrium.
\begin{theorem}\label{6.25}
	
	The pair $\left(\mathbf{\Pi}^{*},\mathbf{C}^{*}\right):=\left\{\left((\Pi^{*,1}(t),C^{*,1}(t,X^{*,1}_{t})),\cdots,(\Pi^{*,n}(t),C^{*,n}(t,X^{*,n}_{t}))\right), 0\leq t\leq 
	T\right\}$ is an $\epsilon_{n}$-Nash equilibrium to the n-agent game with the explicit order $\epsilon_{n}=O(n^{-\frac{1}{2}})$.
\end{theorem}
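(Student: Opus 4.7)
The approach is the standard two-step MFG-to-Nash comparison. First, I would show that when all agents adopt $(\Pi^{*,i},C^{*,i})$ as in \eqref{candi}, the empirical averages $\overline{Z}^{*,n}_t$ and $\overline{X}^{*,n}_T$ concentrate around their deterministic MFE limits $\overline{Z}_t$ and $\overline{X}_T$ at rate $n^{-1/2}$. Second, I would show that any unilateral deviation by agent $i$ can only improve the objective by $O(n^{-1/2})$, since the remaining $n-1$ agents essentially reproduce the mean-field benchmarks and the optimality of $(\Pi^*,C^*)$ in the MFG problem (Lemma \ref{lemma7.8}) is then inherited up to a vanishing error.

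For the concentration step, I would exploit that $C^{*,i}(t,X^{*,i}_t)$ is affine in $X^{*,i}_t$ while \eqref{wealth} is linear, so that $X^{*,i}_t$, and hence $Z^{*,i}_t$, can be written explicitly as a deterministic function of $(t,o_{\alpha(i)})$ plus a stochastic integral in $W^i$. Independence of the Brownian motions $W^i$ and Assumption \ref{assump4} then yield
\begin{equation*}
\sup_{t\in[0,T]}\mathbb{E}\bigl[|\overline{Z}^{*,n}_t-\overline{Z}_t|^2\bigr]^{1/2}\leq C n^{-1/2},\qquad \mathbb{E}\bigl[|\overline{X}^{*,n}_T-\overline{X}_T|^2\bigr]^{1/2}\leq C n^{-1/2},
\end{equation*}
with $C$ independent of $n$: the variance of the sample average of stochastic integrals contributes the first $n^{-1/2}$, and the discrepancy $\epsilon_n$ between $F_n$ and $F$ contributes the second.

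For the deviation step, fix $i$ and any admissible $(\Pi^i,C^i)\in\mathcal{A}^{i,e}_0$; replacing only agent $i$'s control out of $n$ changes $\overline{Z}^n_t-\overline{Z}^{*,n}_t$ and $\overline{X}^n_T-\overline{X}^{*,n}_T$ by terms of order $1/n$, so the realized benchmarks stay within $O(n^{-1/2})$ in $L^2$ of $\overline{Z}_t$ and $\overline{X}_T$. I would then expand $U_{\alpha(i)}(C^i_t-\theta_{\alpha(i)}\overline{Z}^n_t)-U_{\alpha(i)}(C^i_t-\theta_{\alpha(i)}\overline{Z}_t)$ and its terminal counterpart via the multiplicative identity $U_{\alpha(i)}(y+h)=U_{\alpha(i)}(y)e^{-h/\beta_{\alpha(i)}}$, apply Cauchy--Schwarz together with the $L^2$ bound in Assumption \ref{assump5}, and integrate in $t$ to obtain
\begin{equation*}
\bigl|J_i((\Pi^i,C^i),(\mathbf{\Pi}^*,\mathbf{C}^*)^{-i})-J^r((\Pi^i,C^i),0,x_0;\overline{Z},\overline{X}_T)\bigr|\leq C n^{-1/2},
\end{equation*}
and the analogous bound with $(\Pi^i,C^i)$ replaced by $(\Pi^{*,i},C^{*,i})$. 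Combining these two estimates with the MFG optimality $J^r((\Pi^i,C^i),0,x_0;\overline{Z},\overline{X}_T)\leq J^r((\Pi^*,C^*),0,x_0;\overline{Z},\overline{X}_T)$ from Lemma \ref{lemma7.8} yields \eqref{appnash} with $\epsilon_n=O(n^{-1/2})$.

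The main obstacle is handling the exponential utility under perturbations of the benchmark: a naive mean-value estimate of $U_{\alpha(i)}(y+h)-U_{\alpha(i)}(y)$ produces an exponentially large derivative that the size of $h$ alone cannot tame. My resolution is the multiplicative decomposition $U_{\alpha(i)}(y+h)-U_{\alpha(i)}(y)=U_{\alpha(i)}(y)(e^{-h/\beta_{\alpha(i)}}-1)$, so that $U_{\alpha(i)}(y)$ is controlled in $L^2$ by Assumption \ref{assump5} while the small factor $e^{-h/\beta_{\alpha(i)}}-1$ is controlled in $L^2$ via the elementary bound $|e^{-x}-1|\leq |x|e^{|x|}$ together with the moment estimates on $h=\theta_{\alpha(i)}(\overline{Z}^n_t-\overline{Z}_t)$ obtained in the concentration step. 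Dominating the tail contribution of the factor $e^{|x|}$ via a truncation or a higher moment estimate is the most delicate piece of the argument.
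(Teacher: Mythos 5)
Your overall architecture coincides with the paper's: a concentration lemma giving $\sup_{t}\mathbb{E}[|\overline{Z}^{*,n}_{t}-\overline{Z}_{t}|^{2}]=O(n^{-1})$ and $\mathbb{E}[|\overline{X}^{*,n}_{T}-\overline{X}_{T}|^{2}]=O(n^{-1})$ (splitting into a sample-fluctuation term and an $\epsilon_{n}$-bias term, exactly as in Lemma \ref{lemma1}), followed by the auxiliary frozen-benchmark control problem whose optimizer is $(\Pi^{*,i},C^{*,i})$ and the three-term decomposition of $J_{i}(\text{deviation})-J_{i}(\mathbf{\Pi}^{*},\mathbf{C}^{*})$. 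That part is fine.

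The gap is in your key estimate for the utility differences. You bound $U(y+h)-U(y)=U(y)(e^{-h/\beta}-1)$ via $|e^{-x}-1|\leq|x|e^{|x|}$ and propose to tame the factor $e^{|x|}$ by truncation or higher moments. For the terms comparing $\overline{Z}^{*,n}_{t}$ with $\overline{Z}_{t}$ this could be salvaged (the perturbation there is essentially Gaussian with variance $O(1/n)$), but for the terms $I^{(3)}_{i}$ and $I^{(5)}_{i}$ the perturbation is $h=\tfrac{\theta_{1}}{n}(Z^{*,i}_{t}-Z^{i}_{t})$ (resp. $\tfrac{\theta_{1}}{n}(X^{*,i}_{T}-X^{i}_{T})$), where $Z^{i}$ and $X^{i}$ come from an \emph{arbitrary} admissible deviation and are only square-integrable by Definition \ref{admiss}, (\ref{estimate}) and Lemma \ref{lemma1}-(i). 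No exponential moment of $h$ is available, so neither truncation nor a higher-moment estimate closes the argument. The paper avoids this entirely by exploiting that only an \emph{upper} bound on the gain from deviation is needed: it uses the one-sided convexity inequality $e^{a}-e^{b}\leq e^{a}(a-b)$ (equivalently $1-e^{-x}\leq x$ for all real $x$), which yields, e.g., $I^{(3)}_{i}\leq\frac{1}{n}\mathbb{E}\bigl[\int_{0}^{T}\frac{\theta_{1}}{\beta_{1}}\,|U_{1}(C^{i}_{t}-\theta_{1}\overline{Z}^{*,n}_{t})|\,|Z^{*,i}_{t}-Z^{i}_{t}|\,dt\bigr]$; Cauchy--Schwarz together with the $L^{2}$ bounds of Assumption \ref{assump5} and Lemma \ref{lemma1} then suffices. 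Replace your symmetric bound by this one-sided one and the delicate tail issue you flagged disappears.
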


To prove Theorem \ref{6.25}, we need the following auxiliary results.
\begin{lemma}\label{lemma1} For any $n\geq1$, it holds that
	
	(i) For  $i\in\left\{1,\cdots,n\right\}$, let $Z^{i}$ be the habit formation process associated with an admissible control $\left(\Pi^{i},C^{i}\right)\in\mathcal{A}^{i,e}_{0}$. Then, we have
	\begin{equation}
		\mathbb{E}\left[\sup_{t\in[0,T]}\left|Z^{i}_{t}\right|^{2}\right]<\infty.
	\end{equation}
(ii) Let $\left(\overline{Z},\overline{X}_{T}\right)$ be the fixed point to (\ref{orgfix}) and  $\left(\overline{Z}^{*,n},\overline{X}^{*,n}_{T}\right)$ be defined by (\ref{ZX}). Then, we have
\begin{equation}
	\sup_{t\in[0,T]}\mathbb{E}\left[\left|\overline{Z}^{*,n}_{t}-\overline{Z}_{t}\right|^{2}\right]=O(n^{-1}),\quad 	\mathbb{E}\left[\left|\overline{X}^{*,n}_{T}-\overline{X}_{T}\right|^{2}\right]=O(n^{-1}).
\end{equation}
\end{lemma}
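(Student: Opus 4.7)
The plan is to handle the two parts of the lemma separately, with part (ii) being the one that carries the real content.

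For \textbf{part (i)}, I would simply invoke the closed-form representation
\[
Z^{i}_{t} = e^{-\delta t}\Bigl(z_{0}+\int_{0}^{t}\delta e^{\delta s}C^{i}_{s}\,ds\Bigr),
\]
and bound $|Z^{i}_{t}|^{2} \le 2z_{0}^{2}+C_{T,\delta}\int_{0}^{T}(C^{i}_{s})^{2}\,ds$ by combining $(a+b)^{2}\le 2a^{2}+2b^{2}$ with Cauchy--Schwarz (the constant $C_{T,\delta}$ depends only on $T$ and $\delta$, not on $t$). Taking supremum in $t$ and then expectation, the admissibility condition $\mathbb{E}\bigl[\int_{0}^{T}(C^{i}_{s})^{2}\,ds\bigr]<\infty$ from Definition \ref{admiss} finishes the proof.

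For \textbf{part (ii)}, the idea is a ``fluctuation plus bias'' decomposition. I would write $\overline{Z}^{*,n}_{t}-\overline{Z}_{t}=I^{n}_{1}(t)+I^{n}_{2}(t)$ with
\[
I^{n}_{1}(t) := \frac{1}{n}\sum_{i=1}^{n}\int_{0}^{t}\delta e^{\delta(s-t)}\bigl(C^{*,i}(s,X^{*,i}_{s})-\mathbb{E}[C^{*,i}(s,X^{*,i}_{s})]\bigr)\,ds,
\]
\[
I^{n}_{2}(t) := \int_{0}^{t}\delta e^{\delta(s-t)}\Bigl(\tfrac{1}{n}\sum_{i=1}^{n}\mathbb{E}[C^{*,i}(s,X^{*,i}_{s})]-\mathbb{E}_{m}[C^{*}(s,X^{*}_{s})]\Bigr)\,ds.
\]
Because $X^{*,i}$ is measurable with respect to the $\sigma$-algebra generated by $W^{i}$ and the Brownian motions $W^{1},\dots,W^{n}$ are independent, the summands of $I^{n}_{1}(t)$ are centered and mutually independent; the variance identity for sums of independent variables then yields $\mathbb{E}[|I^{n}_{1}(t)|^{2}]=O(1/n)$, provided the integrand has uniformly bounded second moment (see below). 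For the deterministic term $I^{n}_{2}(t)$, the integrand depends on $i$ only through $\alpha(i)$, and regrouping agents by class (Assumption \ref{assump2}) gives $\tfrac{1}{n}\sum_{i=1}^{n}\mathbb{E}[C^{*,i}(s,X^{*,i}_{s})]=\sum_{k=1}^{K}F_{n}(\{k\})\mathbb{E}[C^{*}(s,X^{*}_{s})\mid o=o_{k}]$, so $|I^{n}_{2}(t)|\le \delta T\,\epsilon_{n}\,\sup_{k,s}|\mathbb{E}[C^{*}(s,X^{*}_{s})\mid o=o_{k}]|$, and Assumption \ref{assump4} promotes this to $|I^{n}_{2}(t)|^{2}=O(1/n)$. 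A completely parallel decomposition, with the time integral replaced by the terminal values $X^{*,i}_{T}$, handles $\overline{X}^{*,n}_{T}-\overline{X}_{T}$.

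Both halves of the argument hinge on a uniform-in-$(i,s,n)$ second-moment bound for $C^{*,i}(s,X^{*,i}_{s})$ and $X^{*,i}_{T}$, and this is where the technical work is concentrated. Fortunately the feedback (\ref{candi}) is affine in $X^{*,i}_{s}$ with deterministic coefficients that are bounded on $[0,T]$ (since $T+1-t\ge 1$ there and $\overline{Z}\in\mathcal{C}_{T}$ is continuous by Proposition \ref{prop2.3}), so substituting into (\ref{wealth}) produces a linear SDE of Ornstein--Uhlenbeck type on which a standard Gronwall moment estimate applies; the finiteness of $\mathcal{O}_{e}$ (Assumption \ref{assump2}) then turns the constants into a maximum over the $K$ classes and renders them independent of $i$ and $n$. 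Once this uniform moment bound is in place, combining the fluctuation and bias estimates via $(a+b)^{2}\le 2a^{2}+2b^{2}$ and taking the supremum in $t\in[0,T]$ yields the claimed $O(n^{-1})$ rate. The \emph{main obstacle} is thus purely the careful bookkeeping of this uniform moment bound; no genuinely new idea is required once the decomposition above and the type-class regrouping for the bias term have been set up.
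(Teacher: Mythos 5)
Your proposal is correct and follows essentially the same route as the paper: part (i) via the explicit formula for $Z^{i}$ plus Cauchy--Schwarz and admissibility, and part (ii) via the same fluctuation-plus-bias decomposition, with the fluctuation term controlled by independence of the $W^{i}$ and the bias term by regrouping agents into the $K$ type classes and invoking Assumption \ref{assump4}. The uniform second-moment bound you flag as the main technical point is handled in the paper by introducing the class-representative processes $\hat{X}^{k}$ and exploiting the affine feedback form, which matches your Ornstein--Uhlenbeck/Gronwall argument.
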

\begin{proof}
\textbf{(i)}  Recall that 
\begin{equation}
	Z^{i}_{t}=e^{-\delta t}z_{0}+\int_{0}^{t}\delta e^{\delta(s-t)}C^{i}_{s}ds.
\end{equation}
We have 
\begin{equation}
	\begin{split}
		|Z^{i}_{t}|^{2}&\leq C\left((z_{0})^{2}+\left(\int_{0}^{t}\delta e^{\delta(s-t)}C^{i}_{s}ds\right)^{2}\right)\\
		&\leq C\left((z_{0})^{2}+\int_{0}^{t}(\delta e^{\delta(s-t)})^{2}ds\int_{0}^{t}(C^{i}_{s})^{2}ds\right)\\
		&\leq C\left((z_{0})^{2}+\int_{0}^{T}(C^{i}_{s})^{2}ds\right),
	\end{split}
\end{equation}
where $C$ is some constant that may vary from one line to another. Therefore, we get 
\begin{equation}
	\mathbb{E}\left[\sup_{t\in[0,T]}\left|Z^{i}_{t}\right|^{2}\right]\leq C\left((z_{0})^{2}+\mathbb{E}\left[\int_{0}^{T}(C^{i}_{s})^{2}ds\right]\right)<\infty.
\end{equation}

\textbf{(ii)}  Using (\ref{fixexp}) and (\ref{Z}), we have  
\begin{equation*}
	\begin{split}
		\overline{Z}^{*,n}_{t}-\overline{Z}_{t}&=\int_{0}^{t}\delta e^{\delta(s-t)}\left(\frac{1}{n}\sum_{i=1}^{n}C^{*,i}(s,X^{*,i}_{s})-\mathbb{E}\left[C^{*}(s,X^{*}_{s})\right]\right)ds\\&=\int_{0}^{t}\delta e^{\delta(s-t)}\left(\frac{1}{n}\sum_{i=1}^{n}C^{*,i}(s,X^{*,i}_{s})-\frac{1}{n}\sum_{i=1}^{n}\mathbb{E}\left[C^{*,i}(s,X^{*,i}_{s})\right]\right)ds\\&+\int_{0}^{t}\delta e^{\delta(s-t)}\left(\frac{1}{n}\sum_{i=1}^{n}\mathbb{E}\left[C^{*,i}(s,X^{*,i}_{s})\right]-\mathbb{E}\left[C^{*}(s,X^{*}_{s})\right]\right)ds.
	\end{split}
\end{equation*}
Then, applying  {H\"{o}lder} inequality, we obtain
\begin{equation*}
	\begin{split}
		\left|\overline{Z}^{*,n}_{t}-\overline{Z}_{t}\right|^{2}&\leq C\left(\int_{0}^{t}\delta e^{\delta(s-t)}\left(\frac{1}{n}\sum_{i=1}^{n}C^{*,i}(s,X^{*,i}_{s})-\frac{1}{n}\sum_{i=1}^{n}\mathbb{E}\left[C^{*,i}(s,X^{*,i}_{s})\right]\right)ds\right)^{2}\\&+C\left(\int_{0}^{t}\delta e^{\delta(s-t)}\left(\frac{1}{n}\sum_{i=1}^{n}\mathbb{E}\left[C^{*,i}(s,X^{*,i}_{s})\right]-\mathbb{E}\left[C^{*}(s,X^{*}_{s})\right]\right)ds\right)^{2}\\&\leq C\left(\int_{0}^{t}\left(\frac{1}{n}\sum_{i=1}^{n}C^{*,i}(s,X^{*,i}_{s})-\frac{1}{n}\sum_{i=1}^{n}\mathbb{E}\left[C^{*,i}(s,X^{*,i}_{s})\right]\right)^{2}ds\right)\\&+C\left(\int_{0}^{t}\left(\frac{1}{n}\sum_{i=1}^{n}\mathbb{E}\left[C^{*,i}(s,X^{*,i}_{s})\right]-\mathbb{E}\left[C^{*}(s,X^{*}_{s})\right]\right)^{2}ds\right).
	\end{split}
\end{equation*}
Taking the expectation of both sides of above inequality yields
\begin{equation*}
	\begin{split}
		\mathbb{E}\left[\left|\overline{Z}^{*,n}_{t}-\overline{Z}_{t}\right|^{2}\right]&\leq C\int_{0}^{t}\mathbb{E}\left[\left(\frac{1}{n}\sum_{i=1}^{n}C^{*,i}(s,X^{*,i}_{s})-\frac{1}{n}\sum_{i=1}^{n}\mathbb{E}\left[C^{*,i}(s,X^{*,i}_{s})\right]\right)^{2}\right]ds\\&+C\int_{0}^{t}\mathbb{E}\left[\left(\frac{1}{n}\sum_{i=1}^{n}\mathbb{E}\left[C^{*,i}(s,X^{*,i}_{s})\right]-\mathbb{E}\left[C^{*}(s,X^{*}_{s})\right]\right)^{2}\right]ds
	\end{split}
\end{equation*}
Therefore,
\begin{equation}
	\sup_{t\in[0,T]}	\mathbb{E}\left[\left|\overline{Z}^{*,n}_{t}-\overline{Z}_{t}\right|^{2}\right]\leq C\left(\sup_{t\in[0,T]}I^{n}_{1}(t)+\sup_{t\in[0,T]}I^{n}_{2}(t)\right),
\end{equation}
where $I^{n}_{1}(t)$ and $I^{n}_{1}(t)$ are  defined by
\begin{equation}
	\begin{split}
		&I^{n}_{1}(t):= \mathbb{E}\left[\left(\frac{1}{n}\sum_{i=1}^{n}C^{*,i}(t,X^{*,i}_{t})-\frac{1}{n}\sum_{i=1}^{n}\mathbb{E}\left[C^{*,i}(t,X^{*,i}_{t})\right]\right)^{2}\right],\\
		&I^{n}_{2}(t):=\mathbb{E}\left[\left(\frac{1}{n}\sum_{i=1}^{n}\mathbb{E}\left[C^{*,i}(t,X^{*,i}_{t})\right]-\mathbb{E}\left[C^{*}(t,X^{*}_{t})\right]\right)^{2}\right].
	\end{split}
\end{equation}
For $k\in[K]$, let us introduce $\left(\hat{\Pi}^{k}, \hat{C}^{k}\right)$ :
\begin{equation*}
	\begin{cases}
		\hat{\Pi}^{k}(t):=\beta_{k}\frac{\mu_{k}}{\sigma^{2}_{k}}(T+1-t) ,\\
		\hat{C}^{k}(t,x):=\frac{(x-\theta_{k}\overline{X}_{T})}{T+1-t}+\theta_{k}\overline{Z}_{t}
-\frac{\theta_{k}}{T+1-t}\int_{t}^{T}\overline{Z}_{s}ds+\frac{1}{4}\beta_{k}\left(\frac{\mu_{k}}{\sigma_{k}}\right)^{2}\left((T+1-t)
-\frac{1}{T+1-t}\right)
	\end{cases}
\end{equation*}
and an auxiliary process $\hat{X}^{k}$ satisfying 
\begin{equation}
	d\hat{X}^{k}_{t}=\hat{\Pi}^{k}(t)\mu_{k} dt+\hat{\Pi}^{k}(t)\sigma_{k} dW_{t}-\hat{C}^{k}(t,\hat{X}^{k}_{t})dt,\quad \hat{X}_{0}^{k}=x_{0}.
\end{equation}
Then, for any $t\in[0,T]$,
\begin{equation}
	\begin{split}
		&\mathbb{E}\left[\left(X^{*,i}_{t}\right)^{2}\right]1_{\alpha(i)=k}=\mathbb{E}\left[\left(\hat{X}^{k}_{t}\right)^{2}\right]1_{\alpha(i)=k},\\
	&\mathbb{E}\left[\left(C^{*,i}(t,X^{*,i}_{t})\right)^{2}\right]1_{\alpha(i)=k}=\mathbb{E}\left[\left(\hat{C}^{k}(t,\hat{X}^{k}_{t})\right)^{2}\right]1_{\alpha(i)=k}.
	\end{split}
\end{equation}
Therefore, for the term $I^{n}_{1}(t)$, it is clear that
\begin{equation}
	\begin{split}
		\sup_{t\in[0,T]}I^{n}_{1}(t):&=\sup_{t\in[0,T]} \mathbb{E}\left[\left(\frac{1}{n}\sum_{i=1}^{n}C^{*,i}(t,X^{*,i}_{t})-\frac{1}{n}\sum_{i=1}^{n}\mathbb{E}\left[C^{*,i}(t,X^{*,i}_{t})\right]\right)^{2}\right]\\&=\sup_{t\in[0,T]}\frac{1}{n^{2}}\sum_{i=1}^{n}\mathbb{E}\left[\left(C^{*,i}(t,X^{*,i}_{t})-\mathbb{E}\left[C^{*,i}(t,X^{*,i}_{t})\right]\right)^{2}\right]\\&\leq\sup_{t\in[0,T]}\frac{1}{n}\sup_{1\leq k\leq K}\mathbb{E}\left[\left(\hat{C}^{k}(t,\hat{X}^{k}_{t})\right)^{2}\right]=O(n^{-1}).
	\end{split}
\end{equation}
It remains to estimate $I^{n}_{2}(t)$. Using Assumption \ref{assump4}, we have
\begin{equation}
	\begin{split}		\sup_{t\in[0,T]}I^{n}_{2}(t)&=\sup_{t\in[0,T]}\mathbb{E}\left[\left(\frac{1}{n}\sum_{i=1}^{n}\mathbb{E}\left[C^{*,i}(t,X^{*,i}_{t})\right]-\mathbb{E}\left[C^{*}(t,X^{*}_{t})\right]\right)^{2}\right]\\&=\sup_{t\in[0,T]}\mathbb{E}\left[\left(\frac{1}{n}\sum_{i=1}^{n}\sum_{k=1}^{K}1_{\alpha(i)=k}\mathbb{E}\left[C^{*,i}(t,X^{*,i}_{t})\right]-\sum_{k=1}^{K}F(\left\{k\right\})\mathbb{E}\left[\hat{C}^{k}(t,\hat{X}^{k}_{t})\right]\right)^{2}\right]\\&=\sup_{t\in[0,T]}\mathbb{E}\left[\left(\sum_{k=1}^{K}\frac{\sum_{i=1}^{n}1_{\alpha(i)=k}}{n}\mathbb{E}\left[\hat{C}^{k}(t,\hat{X}^{k}_{t})\right]-\sum_{k=1}^{K}F(\left\{k\right\})\mathbb{E}\left[\hat{C}^{k}(t,\hat{X}^{k}_{t})\right]\right)^{2}\right]\\&=\sup_{t\in[0,T]}\mathbb{E}\left[\left(\sum_{k=1}^{K}\left(F_{n}(\left\{k\right\})-F(\left\{k\right\})\right)\mathbb{E}\left[\hat{C}^{k}(t,\hat{X}^{k}_{t})\right]\right)^{2}\right]\\&\leq \epsilon_{n}^{2}\sup_{t\in[0,T]}\mathbb{E}\left[\left(\sum_{k=1}^{K}\mathbb{E}\left[\hat{C}^{k}(t,\hat{X}^{k}_{t})\right]\right)^{2}\right]
=O(n^{-1}).
	\end{split}
\end{equation}
Hence,  
\begin{equation}
	\sup_{t\in[0,T]}	\mathbb{E}\left[\left|\overline{Z}^{*,n}_{t}-\overline{Z}_{t}\right|^{2}\right]=O(n^{-1}).
\end{equation}
Now, we move to the estimation about $\left(\overline{X}^{*,n}_{T}, \overline{X}_{T}\right)$.  It follows that 
\begin{equation*}
	\begin{split}
		\mathbb{E}\left[\left|\overline{X}^{*,n}_{T}-\overline{X}_{T}\right|^{2}\right]&\leq C\mathbb{E}\left[\left|\frac{1}{n}\sum_{i=1}^{n}X^{*,i}_{T}-\frac{1}{n}\sum_{i=1}^{n}\mathbb{E}\left[X^{*,i}_{T}\right]\right|^{2}\right]\\&+C\mathbb{E}\left[\left|\frac{1}{n}\sum_{i=1}^{n}\mathbb{E}\left[X^{*,i}_{T}\right]-\mathbb{E}\left[X^{*}_{T}\right]\right|^{2}\right].
	\end{split}
\end{equation*} 
Using the same technique in showing the convergence error of $I^{n}_{1}(t)$ and $I^{n}_{2}(t)$, we have
\begin{align}
	&\mathbb{E}\left[\left|\frac{1}{n}\sum_{i=1}^{n}X^{*,i}_{T}-\frac{1}{n}\sum_{i=1}^{n}\mathbb{E}\left[X^{*,i}_{T}\right]\right|^{2}\right]=O(n^{-1}),\\
	&\mathbb{E}\left[\left|\frac{1}{n}\sum_{i=1}^{n}\mathbb{E}\left[X^{*,i}_{T}\right]-\mathbb{E}\left[X^{*}_{T}\right]\right|^{2}\right]=O(n^{-1}).
\end{align}
Therefore, we obtain  
\begin{equation}
	\mathbb{E}\left[\left|\overline{X}^{*,n}_{T}-\overline{X}_{T}\right|^{2}\right]=O(n^{-1}),
\end{equation}
which completes the proof.
\end{proof}

We are now ready to prove Theorem \ref{6.25}.
\begin{proof}
	For $i\in\left\{1,\cdots,n\right\}$, let $Z^{i}=\left\{Z^{i}_{t}, 0\leq t\leq T\right\}$ and $Z^{*,i}=\left\{Z^{*,i}_{t}, 0\leq t\leq T\right\}$ be the habit formation process of agent $i$ under an arbitrary admissible control $\left(\Pi^{i},C^{i}\right)\in\mathcal{A}^{i,e}_{0}$ and under the control defined via the candidate strategy $\left(\Pi^{*,i},C^{*,i}\right)$ in (\ref{candi}), respectively. For ease of presentation, let us introduce
	\begin{align}
		&\overline{Z}^{*,n,-i}_{t}:=\frac{1}{n}\sum_{j\neq i}Z^{*,j}_{t},\,\ t\in[0,T],\\
		&\overline{X}^{*,n,-i}_{T}:=\frac{1}{n}\sum_{j\neq i}X^{*,j}_{T}.
	\end{align}
 Without loss of generality, we assume $\alpha(i)=1$. Then, we have  
\begin{equation*}
	J_{i}\left((\Pi^{*,i},C^{*,i}),(\mathbf{\Pi}^{*},\mathbf{C}^{*})^{-i}\right):=\mathbb{E}\left[\int_{0}^{T}U_{1}\left(C^{*,i}_{t}-\theta_{1}\overline{Z}^{*,n}_{t}\right)dt+U_{1}\left(X^{*,i}_{T}-\theta_{1}\overline{X}_{T}^{*,n}\right)\right],
\end{equation*}
and 
\begin{align*}
		J_{i}&\left((\Pi^{i},C^{i}),(\mathbf{\Pi}^{*},\mathbf{C}^{*})^{-i}\right):=\\&\mathbb{E}\left[\int_{0}^{T}U_{1}\left(C^{i}_{t}-\theta_{1}(\overline{Z}^{*,n,-i}_{t}+\frac{1}{n}Z^{i}_{t})\right)dt+U_{1}\left(X^{i}_{T}-\theta_{1}(\overline{X}^{*,n,-i}_{T}+\frac{1}{n}X^{i}_{T})\right)\right],
\end{align*}
where $X^{*,i}$ (resp. $X^{i}$) obeys the dynamics (\ref{wealth}) (resp. (\ref{dynaexp}) under an arbitrary admissible control $\left(\Pi^{i},C^{i}\right)$). 

To show (\ref{appnash}) in Definition \ref{Appnash}, let us introduce an auxiliary optimal control problem: for  $\left(\overline{Z}, \overline{X}_{T}\right)$ being the fixed point of (\ref{orgfix}), define 
\begin{equation*}
	\sup_{\left(\Pi,C\right)\in\mathcal{A}^{i,e}_{0}}J_{i}\left(\Pi,C;\overline{Z},\overline{X}_{T}\right):=\sup_{\left(\Pi,C\right)\in\mathcal{A}^{i,e}_{0}}\mathbb{E}\left[\int_{0}^{T}U_{1}\left(C_{t}-\theta_{1}\overline{Z}_{t}\right)dt+U_{1}\left(X^{i}_{T}-\theta_{1}\overline{X}_{T}\right)\right].
\end{equation*}
It is easy to verify that the optimal  strategy of the auxiliary  problem coincides with $\left(\Pi^{*,i},C^{*,i}\right)$ constructed in (\ref{candi}). Then, we have
\begin{equation}\label{appine}
	\begin{split}
	&J_{i}\left((\Pi^{i},C^{i}),(\mathbf{\Pi}^{*},\mathbf{C}^{*})^{-i}\right)-J_{i}(\mathbf{(\Pi^{*},C^{*})})\\&=
		J_{i}\left((\Pi^{i},C^{i}),(\mathbf{\Pi}^{*},\mathbf{C}^{*})^{-i}\right)-\sup_{\left(\Pi,C\right)\in\mathcal{A}^{i,e}_{0}}J_{i}\left(\Pi,C;\overline{Z},\overline{X}_{T}\right)\\&+\sup_{\left(\Pi,C\right)\in\mathcal{A}^{i,e}_{0}}J_{i}\left(\Pi,C;\overline{Z},\overline{X}_{T}\right)
		-J_{i}(\mathbf{(\Pi^{*},C^{*})})\\&\leq\left(J_{i}\left((\Pi^{i},C^{i}),(\mathbf{\Pi}^{*},\mathbf{C}^{*})^{-i}\right)-J_{i}\left(\Pi^{i},C^{i};\overline{Z},\overline{X}_{T}\right)\right)\\&+\sup_{\left(\Pi,C\right)\in\mathcal{A}^{i,e}_{0}}J_{i}\left(\Pi,C;\overline{Z},\overline{X}_{T}\right)
		-J_{i}(\mathbf{(\Pi^{*},C^{*})}).
	\end{split}
\end{equation}
We first evaluate the first term of RHS of (\ref{appine}):
\begin{equation}\label{app1}
	\begin{split}
		J_{i}&\left((\Pi^{i},C^{i}),(\mathbf{\Pi}^{*},\mathbf{C}^{*})^{-i}\right)-J_{i}\left(\Pi^{i},C^{i};\overline{Z},\overline{X}_{T}\right)\\&=\mathbb{E}\left[\int_{0}^{T}\left(U_{1}\left(C^{i}_{t}-\theta_{1}(\overline{Z}^{*,n,-i}_{t}+\frac{1}{n}Z^{i}_{t})\right)-U_{1}\left(C^{i}_{t}-\theta_{1}\overline{Z}_{t}\right)\right)dt\right]\\&+\mathbb{E}\left[U_{1}\left(X^{i}_{T}-\theta_{1}(\overline{X}^{*,n,-i}_{T}+\frac{1}{n}X^{i}_{T})\right)-U_{1}\left(X^{i}_{T}-\theta_{1}\overline{X}_{T}\right)\right]\\&:=I^{(1)}_{i}+I^{(2)}_{i}.
	\end{split}
\end{equation}
Then,
\begin{equation*}
	\begin{split}
		I^{\left(1\right)}_{i}&=\mathbb{E}\left[\int_{0}^{T}\left(U_{1}\left(C^{i}_{t}-\theta_{1}(\overline{Z}^{*,n,-i}_{t}+\frac{1}{n}Z^{i}_{t})\right)-U_{1}\left(C^{i}_{t}-\theta_{1}\overline{Z}^{*,n}_{t}\right)\right)dt\right]\\&+\mathbb{E}\left[\int_{0}^{T}\left(U_{1}\left(C^{i}_{t}-\theta_{1}\overline{Z}^{*,n}_{t}\right)-U_{1}\left(C^{i}_{t}-\theta_{1}\overline{Z}_{t}\right)\right)dt\right]\\&:=I^{(3)}_{i}+I^{(4)}_{i},\\
		I^{(2)}_{i}&=\mathbb{E}\left[U_{1}\left(X^{i}_{T}-\theta_{1}(\overline{X}^{*,n,-i}_{T}+\frac{1}{n}X^{i}_{T})\right)-U_{1}\left(X^{i}_{T}-\theta_{1}\overline{X}^{*,n}_{T}\right)\right]\\&+\mathbb{E}\left[U_{1}\left(X^{i}_{T}-\theta_{1}\overline{X}^{*,n}_{T}\right)-U_{1}\left(X^{i}_{T}-\theta_{1}\overline{X}_{T}\right)\right]\\&:=I^{(5)}_{i}+I^{\left(6\right)}_{i}.
	\end{split}
\end{equation*}
For the term $I^{(3)}_{i}$, it is clear that 
\begin{equation}
	I^{(3)}_{i}=\mathbb{E}\left[\int_{0}^{T}\left(\exp\left\{\frac{\theta_{1}}{\beta_{1}}\overline{Z}^{*,n}_{t}-\frac{1}{\beta}C^{i}_{t}\right\}-\exp\left\{\frac{\theta_{1}}{\beta_{1}}(\overline{Z}^{*,n,-i}_{t}+\frac{1}{n}Z^{i}_{t})-\frac{1}{\beta}C^{i}_{t}\right\}\right)dt\right].
\end{equation}
Using the convexity of exponential function and  {H\"{o}lder} inequality, we obtain  
\begin{equation}
	\begin{split}
		I^{(3)}_{i}&\leq\frac{1}{n}\mathbb{E}\left[\int_{0}^{T}\frac{\theta_{1}}{\beta_{1}}\left|U_{1}\left(C^{i}_{t}-\theta_{1}\overline{Z}^{*,n}_{t}\right)\right|\left|Z^{*,i}_{t}-Z^{i}_{t}\right|dt\right]\\&\leq\frac{1}{n}\mathbb{E}\left[\int_{0}^{T}\left(\frac{\theta_{1}}{\beta_{1}}\right)^{2}\left|U_{1}\left(C^{i}_{t}-\theta_{1}\overline{Z}^{*,n}_{t}\right)\right|^{2}dt\right]^{\frac{1}{2}}\mathbb{E}\left[\int_{0}^{T}\left|Z^{*,i}_{t}-Z^{i}_{t}\right|^{2}dt\right]^{\frac{1}{2}}\\&\leq\frac{C}{n}\mathbb{E}\left[\int_{0}^{T}\left|U_{1}\left(C^{i}_{t}-\theta_{1}\overline{Z}^{*,n}_{t}\right)\right|^{2}dt\right]^{\frac{1}{2}}\mathbb{E}\left[\sup_{t\in[0,T]}\left|Z^{*,i}_{t}\right|^{2}+\sup_{t\in[0,T]}\left|Z^{i}_{t}\right|^{2}\right]^{\frac{1}{2}}\\&=O(n^{-1}),
	\end{split}
\end{equation}
where we have just used  Lemma \ref{lemma1}-(i) and Assumption \ref{assump5}.

Similarly,
\begin{equation*}
	\begin{split}
		I^{(4)}_{i}&\leq\mathbb{E}\left[\int_{0}^{T}\frac{\theta_{1}}{\beta_{1}}\left|U_{1}\left(C^{i}_{t}-\theta_{1}\overline{Z}_{t}\right)\right|\left|\overline{Z}^{*,n}_{t}-\overline{Z}_{t}\right|dt\right]\\&\leq C\mathbb{E}\left[\int_{0}^{T}\left|U_{1}\left(C^{i}_{t}-\theta_{1}\overline{Z}_{t}\right)\right|^{2}dt\right]^{\frac{1}{2}}\mathbb{E}\left[\sup_{t\in[0,T]}\left|Z^{*,i}_{t}-Z^{i}_{t}\right|^{2}\right]^{\frac{1}{2}}\\&=O(n^{-\frac{1}{2}}),
	\end{split}
\end{equation*}
where we have just used  Lemma \ref{lemma1}-(ii) and Assumption \ref{assump5}.

Hence,  
\begin{equation}
	I^{(1)}_{i}=I^{(3)}_{i}+I^{(4)}_{i}=O(n^{-\frac{1}{2}}).
\end{equation}
Recall that 
\begin{equation}
	\begin{split}
		I^{(5)}_{i}=\mathbb{E}\left[\exp\left\{\frac{\theta_{1}}{\beta_{1}}\overline{X}^{*,n}_{T}-\frac{1}{\beta_{1}}X^{i}_{T}\right\}-\exp\left\{\frac{\theta_{1}}{\beta_{1}}(\overline{X}^{*,n,-i}_{T}+\frac{1}{n}X^{i}_{T})-\frac{1}{\beta_{1}}X^{i}_{T}\right\}\right].
	\end{split}
\end{equation}
Similarly,
\begin{equation}
	\begin{split}
		I^{(5)}_{i}&\leq\frac{1}{n}\mathbb{E}\left[\frac{\theta_{1}}{\beta_{1}}\left|U_{1}\left(X^{i}_{T}-\theta_{1}\overline{X}^{*,n}_{T}\right)\right|\left|X^{*,i}_{T}-X^{i}_{T}\right|\right]\\&\leq\frac{C}{n}\mathbb{E}\left[\left|U_{1}\left(X^{i}_{T}-\theta_{1}\overline{X}^{*,n}_{T}\right)\right|^{2}\right]^{\frac{1}{2}}\mathbb{E}\left[\sup_{t\in[0,T]}\left|X^{*,i}_{t}\right|^{2}+\sup_{t\in[0,T]}\left|X^{i}_{t}\right|^{2}\right]^{\frac{1}{2}}\\&=O(n^{-1}),
	\end{split}
\end{equation}
where we have just used the estimate (\ref{estimate}) and Assumption \ref{assump5}.

For the term $I^{(6)}_{i}$,  we have 
\begin{equation*}
	\begin{split}
		I^{(6)}_{i}&\leq\mathbb{E}\left[\frac{\theta_{1}}{\beta_{1}}\left|U_{1}\left(X^{i}_{T}-\theta_{1}\overline{X}_{T}\right)\right|\left|\overline{X}^{*,n}_{T}-\overline{X}_{T}\right|\right]\\&\leq C\mathbb{E}\left[\left|U_{1}\left(X^{i}_{T}-\theta_{1}\overline{X}_{T}\right)\right|^{2}\right]^{\frac{1}{2}}\mathbb{E}\left[\sup_{t\in[0,T]}\left|\overline{X}^{*,n}_{t}-\overline{X}_{t}\right|^{2}\right]^{\frac{1}{2}}\\&=O(n^{-\frac{1}{2}}),
	\end{split}
\end{equation*}
where we have just used  Lemma \ref{lemma1}-(ii) and Assumption \ref{assump5}.

Therefore, it is clear that 
\begin{equation}
	I^{(2)}_{i}=I^{(5)}_{i}+I^{(6)}_{i}=O(n^{-\frac{1}{2}}).
\end{equation}
By (\ref{app1}), we have
\begin{equation}\label{Est1}
	\begin{split}
		J_{i}&\left((\Pi^{i},C^{i}),(\mathbf{\Pi}^{*},\mathbf{C}^{*})^{-i}\right)-J_{i}\left(\Pi^{i},C^{i};\overline{Z},\overline{X}_{T}\right)=I^{(1)}_{i}+I^{\left(2\right)}_{i}=O(n^{-\frac{1}{2}}).
	\end{split}
\end{equation}
For the second term of RHS of (\ref{appine}), we have 
\begin{equation}
\sup_{\left(\Pi,C\right)\in\mathcal{A}^{i,e}_{0}}J_{i}\left(\Pi,C;\overline{Z},\overline{X}_{T}\right)
-J_{i}\left(\mathbf{\Pi}^{*},\mathbf{C}^{*}\right)=J_{i}\left(\Pi^{*,i},C^{*,i};\overline{Z},\overline{X}_{T}\right)
-J_{i}\left(\mathbf{\Pi}^{*},\mathbf{C}^{*}\right).
\end{equation}
We  therefore follow the similar argument in showing the convergence error of $I^{(4)}_{i}$ and $I^{(6)}_{i}$ with fixed control $\left(\Pi^{*,i},C^{*,i}\right)$ to get 
\begin{equation}\label{Est2}
	J_{i}\left(\Pi^{*,i},C^{*,i};\overline{Z},\overline{X}_{T}\right)
	-J_{i}\left(\mathbf{\Pi}^{*},\mathbf{C}^{*}\right)=O(n^{-\frac{1}{2}}).
\end{equation}
Therefore, the estimates (\ref{appine}), (\ref{Est1}) and (\ref{Est2}) jointly yield (\ref{appnash}) with $\epsilon_{n}=O(n^{-\frac{1}{2}})$.
\end{proof}
\subsection{Power Utility}
For $i\in\left\{1,\cdots,n\right\}$,  the objective function of agent $i$ with power utility is
\begin{equation}
	J_{i}\left((\pi^{i},c^{i}),(\mathbf{\pi},\mathbf{c})^{-i}\right):=\mathbb{E}\left[\int_{0}^{T}U_{\alpha(i)}\left(\frac{c^{i}_{t}X^{i}_{t}}{\left(\overline{Z}^{n}_{t}\right)^{\theta_{\alpha(i)}}}\right)dt+U_{\alpha(i)}\left(\frac{X^{i}_{T}}{\left(\overline{X}_{T}^{n}\right)^{\theta_{\alpha(i)}}}\right)\right],
\end{equation}
where the vector $(\mathbf{\pi},\mathbf{c})^{-i}$ is defined by 
\begin{equation}
	(\mathbf{\pi},\mathbf{c})^{-i}:=\left((\pi^{1},c^{1}),\cdots,(\pi^{i-1},c^{i-1}),(\pi^{i+1},c^{i+1}),\cdots,(\pi^{n},c^{n})\right).
\end{equation}
The definition of an approximate Nash equilibrium  can be obtained in a similar fashion.
\begin{definition}{(Approximate Nash equilibrium)}\label{Appnashpower}
	Let $\mathcal{A}^{p}:=\prod_{i=1}^{n}\mathcal{A}^{i,p}_{0}$. An $n$-tuple of admissible controls $\left(\mathbf{\pi}^{*},\mathbf{c}^{*}\right):=\left((\pi^{*,i},c^{*,i})\right)_{i=1}^{n}\in\mathcal{A}^{p}$ is called an $\epsilon$-Nash equilibrium to the n-agent game, if for any $(\pi^{i},c^{i})\in\mathcal{A}^{i,p}_{0}$, it holds that  
	\begin{equation}\label{appnashpower}
		J_{i}\left((\pi^{i},c^{i}),(\mathbf{\pi}^{*},\mathbf{c}^{*})^{-i}\right)\leq J_{i}\left(\mathbf{\pi}^{*},\mathbf{c}^{*}\right)+\epsilon,\quad\forall i=1,\cdots,n.
	\end{equation}
\end{definition}
For technical convenience and ease of presentation, we make the following assumption on $\mathcal{A}^{p}$ in this section:
\begin{assumption}\label{assump6}
	Assume that the uniform boundedness condition holds that for all $i\in\left\{1,\cdots,n\right\}$,  $$\sup_{t\in[0,T]}|\pi_{t}^{i}|\vee|c^{i}_{t}|<M\quad a.s. $$ for some constant $M$ independent of $n$.
\end{assumption}
\begin{remark}
	Recall that $\pi$ and $c$ represent the proportion of wealth invested in risky assets and spent on consumption, respectively. Assumption \ref{assump6} indicates that no one is allowed to borrow too much wealth for investment and consumption.
\end{remark}
For  $i\in\left\{1,\cdots,n\right\}$, let us construct the  candidate investment and consumption pair $\left(\pi^{*,i},c^{*,i}\right)$ in the power case: for $t\in[0,T]$,
\begin{equation}\label{candipower}
	\begin{cases}
		\pi^{*,i}_{t}:= \frac{\mu_{\alpha(i)}}{(1-p_{\alpha(i)})\sigma_{\alpha(i)}^{2}},\\
		c^{*,i}_{t}:=\left(\overline{Z}_{t}\right)^{\frac{\theta_{\alpha(i)} p_{\alpha(i)}}{p_{\alpha(i)}-1}}g_{\alpha(i)}(t)^{\frac{1}{p_{\alpha(i)}-1}}.
	\end{cases}
\end{equation}
The function $g_{\alpha(i)}(\cdot)$ is given by, for $t\in[0,T]$,
\begin{equation*}
	g_{\alpha(i)}(t) = \left(e^{a_{\alpha(i)}\left(T-t\right)}\left(\frac{1}{\left(\overline{X}_{T}\right)^{p_{\alpha(i)}\theta_{\alpha(i)}}}\right)^{\frac{1}{1-p_{\alpha(i)}}}+e^{-a_{\alpha(i)}t}\int_{t}^{T}e^{a_{\alpha(i)}s}\left(\overline{Z}_{s}\right)^{\frac{\theta_{\alpha(i)} p_{\alpha(i)}}{p_{\alpha(i)}-1}}ds\right)^{1-p_{\alpha(i)}},  
\end{equation*}
where $	a_{\alpha(i)}$ is given by
\begin{equation*}
	a_{\alpha(i)}:=\frac{1}{2}\frac{\mu_{\alpha(i)}^{2}}{\sigma_{\alpha(i)}^{2}}\frac{p_{\alpha(i)}}{\left(1-p_{\alpha(i)}\right)^{2}}
\end{equation*}
and $\left(\overline{Z},\overline{X}_{T}\right)$ is the fixed point of (\ref{orgfixpower}). Denote by $X^{*,i}$ the wealth process of agent $i$ under the candidate strategy $\left(\pi^{*,i},c^{*,i}\right)$:
\begin{equation}\label{canwealthpower}
	\frac{dX^{*,i}_{t}}{X^{*,i}_{t}}=\pi_{t}^{*,i}\mu_{\alpha(i)}dt+\pi^{*,i}_{t}\sigma_{\alpha(i)}dW^{i}_{t}-c^{*,i}_{t}dt, \quad X^{*,i}_{0}=x_{0}.
\end{equation}
Using the estimation in Appendix \ref{5.26.4}, we get for any $q>0$ and $i\in\left\{1,\cdots,n\right\}$,
\begin{equation}\label{boundedness}
	\begin{split}
		\sup_{t\in[0,T]}\left(\pi^{*,i}_{t}\right)^{q}&=\sup_{1\leq k\leq K}\left(\frac{\mu_{k}}{(1-p_{k})\sigma_{k}^{2}}\right)^{q}:=M_{1}\\
		\sup_{t\in[0,T]}\left(c^{*,i}_{t}\right)^{q}&=\sup_{1\leq k\leq K}\sup_{t\in[0,T]}\left(\left(\overline{Z}_{t}\right)^{\frac{\theta_{k} p_{k}}{p_{k}-1}}g_{k}(t)^{\frac{1}{p_{k}-1}}\right)^{q}\\&\leq\sup_{1\leq k\leq K}\sup_{t\in[0,T]}\left(\frac{\left(\overline{Z}_{t}\right)^{\frac{\theta_{k} p_{k}}{p_{k}-1}}}{e^{a_{k}(T-t)}\left(\overline{X}_{T}\right)^{{\frac{\theta_{k} p_{k}}{p_{k}-1}}}}\right)^{q}\\&\leq\sup_{1\leq k\leq K}\left(\left(e^{-\delta T}z_{0}\right)^{{\frac{\theta_{k} p_{k}}{p_{k}-1}}}C_{2}^{{\frac{\theta_{k} p_{k}}{1-p_{k}}}}\right)^{q}:=M_{2}
	\end{split}
\end{equation}
Hence $\left(\mathbf{\pi}^{*},\mathbf{c}^{*}\right)\in\mathcal{A}^{p}$ satisfies  Assumption \ref{assump6}.

 The $i$-th agent's habit formation process is given by 
\begin{equation}\label{zpower}
	Z^{*,i}_{t} = e^{-\delta t}\left(z_{0}+\int_{0}^{t}\delta e^{\delta s}c_{s}^{*,i}X^{*,i}_{s}ds\right),\quad t\in[0,T].
\end{equation}
Denote by
\begin{equation}\label{zxpower}
	\overline{Z}_{t}^{*,n}:=\frac{1}{n}\sum_{i=1}^{n}Z^{*,i}_{t},\quad\overline{X}^{*,n}_{T}:=\left(\prod_{i=1}^{n}X^{*,i}_{T}\right)^{\frac{1}{n}}=\exp\left\{\frac{1}{n}\sum_{i=1}^{n}\log(X^{*,i}_{T})\right\}.
\end{equation}
It follows that \begin{equation}\label{Zpower}
	\overline{Z}^{*,n}_{t}:=e^{-\delta t}z_{0}+\frac{1}{n}\sum_{i=1}^{n}\int_{0}^{t}\delta e^{\delta(s-t)}c_{s}^{*,i}X^{*,i}_{s}ds,\,\ t\in[0,T].
\end{equation}

Next, we introduce the main result of this section on the existence of an approximate Nash equilibrium.
\begin{theorem}\label{6.25power}
	The control pair $\left(\mathbf{\pi}^{*},\mathbf{c}^{*}\right)=\left\{\left((\pi^{*,1},c_{t}^{*,1}),\dots,(\pi^{*,n},c_{t}^{*,n})\right), 0\leq t\leq 
	T\right\}$ in (\ref{candipower}) is an $\epsilon_{n}$-Nash equilibrium to the n-agent game with the explicit order $\epsilon_{n}=O(n^{-\frac{1}{2}})$.
\end{theorem}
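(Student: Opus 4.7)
The plan is to mirror the argument used for the exponential case (Theorem \ref{6.25}), introducing an auxiliary single-agent problem in which the competition benchmarks are frozen to their mean-field limits $(\overline{Z},\overline{X}_{T})$ given by Proposition \ref{prop2.3power}. For an arbitrary deviator $(\pi^{i},c^{i})\in\mathcal{A}^{i,p}_{0}$ satisfying Assumption \ref{assump6}, I would split
\begin{equation*}
\begin{split}
 J_{i}\!\left((\pi^{i},c^{i}),(\mathbf{\pi}^{*},\mathbf{c}^{*})^{-i}\right)-J_{i}(\mathbf{\pi}^{*},\mathbf{c}^{*})
 &=\Big[J_{i}\!\left((\pi^{i},c^{i}),(\mathbf{\pi}^{*},\mathbf{c}^{*})^{-i}\right)-J^{r}\!\left(\pi^{i},c^{i};\overline{Z},\overline{X}_{T}\right)\Big]\\
 &\quad+\Big[J^{r}\!\left(\pi^{*,i},c^{*,i};\overline{Z},\overline{X}_{T}\right)-J_{i}(\mathbf{\pi}^{*},\mathbf{c}^{*})\Big],
\end{split}
\end{equation*}
after using that $(\pi^{*,i},c^{*,i})$ is optimal for the auxiliary problem by Lemma \ref{lemmapower}. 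The task reduces to showing each bracket is $O(n^{-1/2})$ uniformly in the deviation.

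\textbf{Key auxiliary estimates.} First I would establish the power-case analogue of Lemma \ref{lemma1}. Since Assumption \ref{assump6} gives uniform a.s. bounds on $\pi^{i},c^{i}$, the wealth $X^{i}$ defined by (\ref{dynapower}) has uniformly bounded moments of all orders, and so does $\log X^{i}_{T}$; consequently $Z^{i}_{t}\in[e^{-\delta T}z_{0},M']$ for a constant $M'$ independent of $n$. Using the independence of $\{W^{i}\}$, conditioning on class membership, and Assumption \ref{assump4}, one then obtains
\begin{equation*}
 \sup_{t\in[0,T]}\mathbb{E}\!\left[\bigl|\overline{Z}^{*,n}_{t}-\overline{Z}_{t}\bigr|^{2}\right]=O(n^{-1}),\qquad \mathbb{E}\!\left[\bigl|\log\overline{X}^{*,n}_{T}-\log\overline{X}_{T}\bigr|^{2}\right]=O(n^{-1}),
\end{equation*}
exactly as in the proof of Lemma \ref{lemma1}-(ii), decomposing $\overline{Z}^{*,n}_{t}-\overline{Z}_{t}$ and $\frac{1}{n}\sum_{i}\log X^{*,i}_{T}-\mathbb{E}[\log X^{*}_{T}]$ into a fluctuation part (controlled by $1/n$ since summands are independent with uniformly bounded second moments) and an empirical-measure bias part (controlled by $\varepsilon_{n}^{2}$).

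\textbf{Handling the utility differences.} The running and terminal integrands in $J_{i}$ take the multiplicative form $\frac{1}{p}(c^{i}_{t}X^{i}_{t})^{p}(\overline{Z}^{n}_{t})^{-\theta p}$ and $\frac{1}{p}(X^{i}_{T})^{p}(\overline{X}^{n}_{T})^{-\theta p}$. Under the uniform lower bound $\overline{Z}^{n}_{t}\ge e^{-\delta T}z_{0}>0$ and the uniform upper bound coming from the moment estimates, the map $z\mapsto z^{-\theta p}$ is Lipschitz on the relevant range, so that
\begin{equation*}
 \bigl|(\overline{Z}^{n}_{t})^{-\theta p}-\overline{Z}_{t}^{-\theta p}\bigr|\le L\,\bigl|\overline{Z}^{n}_{t}-\overline{Z}_{t}\bigr|.
\end{equation*}
A parallel estimate on the terminal term uses $|x^{-\theta p}-y^{-\theta p}|\le L|\log x-\log y|$ on a bounded log-range; this is where the geometric mean $\overline{X}^{n}_{T}$ is naturally handled via the $\log$-difference estimate above. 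Moreover, when the deviator is present one has $\overline{Z}^{n}_{t}-\overline{Z}^{*,n,-i}_{t}=\frac{1}{n}(Z^{i}_{t}-Z^{*,i}_{t})=O(1/n)$ in $L^{2}$ and similarly $\log\overline{X}^{n}_{T}-\frac{1}{n}\sum_{j\ne i}\log X^{*,j}_{T}=\frac{1}{n}\log X^{i}_{T}=O(1/n)$. Combining these ingredients with Hölder's inequality and the uniform moment bounds on $(c^{i}_{t}X^{i}_{t})^{p}$ and $(X^{i}_{T})^{p}$ yields the required $O(n^{-1/2})$ bound for the first bracket; the second bracket is even simpler because only the undeviated strategy appears.

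\textbf{Main obstacle.} The principal technical difficulty, compared to the exponential case, is that (a) the power utility is unbounded and genuinely nonlinear in both $X$ and the benchmark, and (b) the benchmark enters through $(\overline{Z})^{-\theta p}$ and $(\overline{X}_{T})^{-\theta p}$, which are Lipschitz only when the arguments are bounded away from zero and infinity. Thus the crux is to propagate the uniform-in-$n$, uniform-in-$t$ two-sided bounds on $\overline{Z}^{*,n}_{t}$ and on $X^{*,i}_{t}$ (and on a deviator's process under Assumption \ref{assump6}) along with the $L^{2}$-rates from the previous paragraph, before invoking the Lipschitz step. Once these a priori bounds are secured---which is essentially what (\ref{boundedness}) already provides for the mean-field strategy and what Assumption \ref{assump6} provides for a deviator---the rest of the argument is structurally identical to the exponential proof, and the bound $\varepsilon_{n}=O(n^{-1/2})$ follows from (\ref{Est1})--(\ref{Est2})-type estimates combined with Assumption \ref{assump4}.
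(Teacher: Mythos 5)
Your proposal follows essentially the same route as the paper: the same auxiliary frozen-benchmark problem and two-bracket decomposition, the same power-case analogue of Lemma \ref{lemma1} (the paper's Lemma \ref{lemma2power}, giving all-order moment bounds under Assumption \ref{assump6} and the $O(n^{-1})$ rates for $\overline{Z}^{*,n}-\overline{Z}$ and $(\overline{X}^{*,n}_{T})^{\gamma}-(\overline{X}_{T})^{\gamma}$), and the same Lipschitz/H\"older treatment of the terms $(\overline{Z}^{n}_{t})^{-\theta p}$ and $(\overline{X}^{n}_{T})^{-\theta p}$. One caveat: your claimed a.s.\ upper bound on $Z^{i}_{t}$ and the ``bounded log-range'' for the geometric mean $\overline{X}^{n}_{T}$ are false (only the a.s.\ lower bound $\overline{Z}^{n}_{t}\geq e^{-\delta T}z_{0}$ holds; the upper control is only in $L^{q}$), so the pointwise Lipschitz step must be replaced --- as the paper does --- by $|a^{\gamma}-b^{\gamma}|\leq|\gamma||a-b|\max\{a^{\gamma-1},b^{\gamma-1}\}$ and $|e^{a}-e^{b}|\leq\max(e^{a},e^{b})|a-b|$ combined with Jensen and H\"older against the (including negative) moment bounds, which your sketch already invokes.
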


To prove Theorem \ref{6.25power}, we need the following auxiliary results.
\begin{lemma}\label{lemma2power} For any $n\geq1$, it holds that
	
	(i) For $i\in\left\{1,\cdots,n\right\}$, let the process $X^{i}=\left\{X^{i}_{t}, 0\leq t\leq T\right\}$ be defined by (\ref{dynapower}) under an admissible control $\left(\pi^{i},c^{i}\right)\in\mathcal{A}^{i,p}_{0}$.  Then, under the condition of Assumption \ref{assump6}, for any $q\in\mathbb{R}$, there exists a constant $C_{q}$ independent of $i$ and $n$ such that
	\begin{equation}\label{Xest}
		\sup_{t\in[0,T]}\mathbb{E}\left[\left(X^{i}_{t}\right)^{q}\right]\leq C_{q}.
	\end{equation}
	
	(ii) For $i\in\left\{1,\cdots,n\right\}$, let $Z^{*,i}=\left\{Z^{*,i}_{t}, 0\leq t\leq T\right\}$ be defined by (\ref{zpower}). Then, for any $q\in(1,\infty)\cup(-\infty,0)$, there exists a constant $C_{q}$ independent of $i$ and $n$ such that
	\begin{equation}
		\sup_{t\in[0,T]}\mathbb{E}\left[\left(Z^{*,i}_{t}\right)^{q}\right]\leq C_{q}.
	\end{equation}
	
	(iii) Let $\left(\overline{Z},\overline{X}_{T}\right)$ be the fixed point to (\ref{orgfixpower}), and let $\left(\overline{Z}^{*,n},\overline{X}^{*,n}_{T}\right)$ be defined by (\ref{zxpower}). Then, for any  $\gamma\in\mathbb{R}$, we have
	\begin{equation}
		\sup_{t\in[0,T]}\mathbb{E}\left[\left|\overline{Z}^{*,n}_{t}-\overline{Z}_{t}\right|^{2}\right]=O(n^{-1}),\quad 	\mathbb{E}\left[\left|\left(\overline{X}^{*,n}_{T}\right)^{\gamma}-\left(\overline{X}_{T}\right)^{\gamma}\right|^{2}\right]=O(n^{-1}).
	\end{equation}
\end{lemma}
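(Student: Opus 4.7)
The plan is to handle the three parts separately, exploiting the closed-form geometric structure of the wealth process (\ref{dynapower}) together with the uniform boundedness provided by Assumption \ref{assump6} and by (\ref{boundedness}), and finally reducing the convergence rates in (iii) to classical $L^{2}$ variance estimates for sample means.

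For (i), I would start from the Doléans-Dade exponential representation
\[
X^{i}_{t}=x_{0}\exp\!\left(\int_{0}^{t}\bigl(\pi^{i}_{s}\mu_{\alpha(i)}-\tfrac{1}{2}(\pi^{i}_{s})^{2}\sigma_{\alpha(i)}^{2}-c^{i}_{s}\bigr)ds+\int_{0}^{t}\pi^{i}_{s}\sigma_{\alpha(i)}dW^{i}_{s}\right),
\]
and raise to the power $q$. The drift term is uniformly bounded on $[0,T]$ by Assumption \ref{assump6}, so it contributes at most $e^{CT}$. For the stochastic integral I would introduce the compensator $-\tfrac12 q^{2}\int_{0}^{t}(\pi^{i}_{s})^{2}\sigma_{\alpha(i)}^{2}ds$ to turn the exponential into a true martingale whose expectation equals $1$ (Novikov's condition is trivial because the integrand is bounded). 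Adding back the compensator produces only another bounded deterministic factor. Thus $\mathbb{E}[(X^{i}_{t})^{q}]\leq x_{0}^{q}e^{C_{q}T}=:C_{q}$, uniform in $t$, $i$, and $n$.

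For (ii), the nonnegativity of $c^{*,i}$ and $X^{*,i}$ makes
\[
Z^{*,i}_{t}\;\geq\; e^{-\delta t}z_{0}\;\geq\; e^{-\delta T}z_{0}>0,\qquad t\in[0,T],
\]
which handles $q<0$ at once. For $q>1$ I would apply Minkowski's integral inequality to the representation (\ref{zpower}), together with the uniform bound $c^{*,i}_{s}\leq M_{2}^{1/q}$ coming from (\ref{boundedness}), to get $\|Z^{*,i}_{t}\|_{L^{q}}\leq z_{0}+C\sup_{s\in[0,T]}\|X^{*,i}_{s}\|_{L^{q}}$, and then invoke part (i).

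The content of the lemma is really part (iii). For the habit I would use the decomposition already exploited in Lemma \ref{lemma1}(ii):
\[
\overline{Z}^{*,n}_{t}-\overline{Z}_{t}\;=\;\underbrace{\int_{0}^{t}\delta e^{\delta(s-t)}\Bigl(\tfrac{1}{n}\sum_{i=1}^{n}c^{*,i}_{s}X^{*,i}_{s}-\tfrac{1}{n}\sum_{i=1}^{n}\mathbb{E}[c^{*,i}_{s}X^{*,i}_{s}]\Bigr)ds}_{\text{variance term}}\;+\;\underbrace{\int_{0}^{t}\delta e^{\delta(s-t)}\Bigl(\tfrac{1}{n}\sum_{i=1}^{n}\mathbb{E}[c^{*,i}_{s}X^{*,i}_{s}]-\mathbb{E}[c^{*}_{s}X^{*}_{s}]\Bigr)ds}_{\text{bias term}}.
\]
Inside each class $k$ the processes $(X^{*,i},c^{*,i})_{\alpha(i)=k}$ are i.i.d., so after squaring and taking expectation the variance term is $O(n^{-1})$ by uncorrelatedness (using the uniform $L^{2}$ bound $\mathbb{E}[(c^{*,i}_{s}X^{*,i}_{s})^{2}]\leq M_{2}^{2/2}\,C_{2}$ from (i) and (\ref{boundedness})); the bias term is controlled by Assumption \ref{assump4} since it is bounded by $\epsilon_{n}\sum_{k}\mathbb{E}[\hat{c}^{k}_{s}\hat{X}^{k}_{s}]=O(n^{-1/2})$ in sup norm, giving $O(n^{-1})$ after squaring.

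The harder sub-step, and the main obstacle I expect, is the rate for $|(\overline{X}^{*,n}_{T})^{\gamma}-(\overline{X}_{T})^{\gamma}|^{2}$, because the geometric mean introduces an exponential nonlinearity. I would first set $\bar Y_{n}:=\tfrac{1}{n}\sum_{i=1}^{n}\log X^{*,i}_{T}$ and $y:=\mathbb{E}[\log X^{*}_{T}]$ so that $(\overline{X}^{*,n}_{T})^{\gamma}=e^{\gamma \bar Y_{n}}$ and $(\overline{X}_{T})^{\gamma}=e^{\gamma y}$. The explicit form
\[
\log X^{*,i}_{T}=\log x_{0}+\int_{0}^{T}\!\Bigl(\pi^{*,i}_{s}\mu_{\alpha(i)}-\tfrac{1}{2}(\pi^{*,i}_{s})^{2}\sigma_{\alpha(i)}^{2}-c^{*,i}_{s}\Bigr)ds+\int_{0}^{T}\pi^{*,i}_{s}\sigma_{\alpha(i)}dW^{i}_{s}
\]
makes $\log X^{*,i}_{T}$ a sum of deterministic terms and an independent Gaussian integral with uniformly bounded variance (by (\ref{boundedness})). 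A variance-plus-bias split entirely analogous to the one above, relying again on Assumption \ref{assump4}, yields $\mathbb{E}[(\bar Y_{n}-y)^{2}]=O(n^{-1})$. To transfer this to $e^{\gamma\bar Y_{n}}-e^{\gamma y}$ I would apply the mean value theorem,
\[
\bigl|e^{\gamma \bar Y_{n}}-e^{\gamma y}\bigr|\;\leq\;|\gamma|\bigl(e^{\gamma \bar Y_{n}}+e^{\gamma y}\bigr)\bigl|\bar Y_{n}-y\bigr|,
\]
square both sides, and invoke Cauchy--Schwarz together with uniform bounds on $\mathbb{E}[e^{2q\bar Y_{n}}]$ for any $q\in\mathbb{R}$. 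These exponential moment bounds follow from part (i) applied to $q\mapsto 2\gamma$ after noting $e^{2\gamma \bar Y_{n}}=\prod_{i=1}^{n}(X^{*,i}_{T})^{2\gamma/n}$ and independence across $i$, which reduces the question to $\bigl(\mathbb{E}[(X^{*,1}_{T})^{2\gamma/n}]\bigr)^{n}$; this stays bounded uniformly in $n$ by the standard estimate $(1+c/n)^{n}\leq e^{c}$. Combining these gives $\mathbb{E}[|(\overline{X}^{*,n}_{T})^{\gamma}-(\overline{X}_{T})^{\gamma}|^{2}]=O(n^{-1})$, completing the proof.
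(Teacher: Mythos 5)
Your proposal follows essentially the same route as the paper in all three parts: the same exponential representation and compensated stochastic exponential in (i), the same lower bound $Z^{*,i}_t\geq e^{-\delta T}z_0$ for $q<0$ and a H\"older/Minkowski argument for $q>1$ in (ii), and the same variance-plus-bias decomposition combined with the elementary inequality $|e^{a}-e^{b}|\leq\max(e^{a},e^{b})|a-b|$ and the product bound $\prod_{i}\mathbb{E}[(X^{*,i}_T)^{4\gamma/n}]\leq C$ in (iii). The ideas are all correct.

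There is, however, one concrete mismatch in the final step of (iii). After squaring the mean-value-theorem inequality you have $|e^{\gamma\bar Y_n}-e^{\gamma y}|^{2}\leq\gamma^{2}(e^{\gamma\bar Y_n}+e^{\gamma y})^{2}(\bar Y_n-y)^{2}$, and the Cauchy--Schwarz step you invoke produces the factor $\mathbb{E}\bigl[(\bar Y_n-y)^{4}\bigr]^{1/2}$, not $\mathbb{E}\bigl[(\bar Y_n-y)^{2}\bigr]$. So the second-moment estimate $\mathbb{E}[(\bar Y_n-y)^{2}]=O(n^{-1})$ that you establish is not the input you actually need; you need the fourth-moment bound $\mathbb{E}[(\bar Y_n-y)^{4}]=O(n^{-2})$, whose square root then gives the desired $O(n^{-1})$. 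This stronger bound does hold --- the paper proves exactly it, by the same variance-plus-bias split applied at the level of fourth moments (the within-class terms are i.i.d.\ with bounded fourth moments, and the bias term contributes $\epsilon_n^{4}=O(n^{-2})$) --- so the fix is immediate, but as written the chain of estimates does not close.
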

\begin{proof}
	\textbf{(i)}
	Let $\left(\pi^{i},c^{i}\right)$ be an admissible control for agent $i$, then it holds that
	\begin{equation}
		X^{i}_{t}=x_{0}\exp\left\{\int_{0}^{t}\left(\pi^{i}_{s}\mu_{\alpha(i)}-\frac{\sigma_{\alpha(i)}^{2}}{2}\left(\pi_{s}^{i}\right)^{2}-c^{i}_{s}\right)ds+\int_{0}^{t}\pi^{i}_{s}\sigma_{\alpha(i)}dW^{i}_{s}\right\}.
	\end{equation} 
	Hence, 
	\begin{equation}\label{qwealth}
		\left(X^{i}_{t}\right)^{q}=(x_{0})^{q}\exp\left\{\int_{0}^{t}q\left(\pi^{i}_{s}\mu_{\alpha(i)}-\frac{\sigma_{\alpha(i)}^{2}}{2}\left(\pi_{s}^{i}\right)^{2}-c^{i}_{s}\right)ds+\int_{0}^{t}q\pi^{i}_{s}\sigma_{\alpha(i)}dW^{i}_{s}\right\}.
	\end{equation}
	It follows from (\ref{qwealth}) and Assumption \ref{assump6} that  for any $q\in\mathbb{R}$, 
	\begin{equation}\label{keyx}
		\begin{split}
			\mathbb{E}\left[\left(X^{i}_{t}\right)^{q}\right]&\leq(x_{0})^{q}\exp\left\{|q|MT\left(\mu_{\alpha(i)}+1+\frac{\sigma^{2}_{\alpha(i)}}{2}M\right)\right\}\mathbb{E}\left[\exp\left\{\int_{0}^{t}q\pi^{i}_{s}\sigma_{\alpha(i)}dW^{i}_{s}\right\}\right]\\&\leq (x_{0})^{q}\exp\left\{|q|MT\left(\mu_{\alpha(i)}+1+\frac{\sigma^{2}_{\alpha(i)}}{2}M\left(|q|+1\right)\right)\right\}
			\\&\leq\sup_{1\leq k\leq K}(x_{0})^{q}\exp\left\{|q|MT\left(\mu_{k}+1+\frac{\sigma^{2}_{k}}{2}M\left(|q|+1\right)\right)\right\}:=C_{q}.
		\end{split}
	\end{equation}

	\textbf{(ii)} For the case $q>1$, by (\ref{boundedness}), (\ref{zpower}), (\ref{Xest}) and  {H\"{o}lder} inequality with exponents $(q,q_{0})\in(1,+\infty)^{2}$ satisfying $\frac{1}{q}+\frac{1}{q_{0}}=1$, it follows that 
	\begin{equation}
		\begin{split}
			\sup_{t\in[0,T]}\mathbb{E}\left[\left(Z^{*,i}_{t}\right)^{q}\right]&\leq C_{q}\left(z_{0}\right)^{q}+C_{q}\left(\delta\right)^{q}\mathbb{E}\left[\left(\int_{0}^{T}c^{*,i}_{s}X^{*,i}_{s}ds\right)^{q}\right]\\&\leq C_{q}\left(z_{0}\right)^{q}+C_{q}\left(\delta\right)^{q}\mathbb{E}\left[\left(\int_{0}^{T}\left(c^{*,i}_{s}\right)^{q_{0}}ds\right)^{\frac{q}{q_{0}}}\left(\int_{0}^{T}\left(X^{*,i}_{s}\right)^{q}ds\right)\right]\\&=C_{q}\left(z_{0}\right)^{q}+C_{q}\left(\delta\right)^{q}\left(\int_{0}^{T}\left(c^{*,i}_{s}\right)^{q_{0}}ds\right)^{\frac{q}{q_{0}}}\mathbb{E}\left[\left(\int_{0}^{T}\left(X^{*,i}_{s}\right)^{q}ds\right)\right]\\&\leq C_{q}\left(z_{0}\right)^{q}+C_{q}\left(\delta\right)^{q}T\left(\int_{0}^{T}\left(c^{*,i}_{s}\right)^{q_{0}}ds\right)^{\frac{q}{q_{0}}}\\&\leq C_{q}\left(z_{0}\right)^{q}+C_{q}\left(\delta\right)^{q}T^{1+\frac{q}{q_{0}}}\left(\sup_{t\in[0,T]}c^{*,i}_{s}\right)^{q},
		\end{split}
	\end{equation}
	where $C_{q}$ is a constant that may vary from one line to another.
	
	For $q<0$, it follows from (\ref{zpower}) that, for all $t\in[0,T]$, 
	\begin{equation}
		Z^{*,i}_{t} = e^{-\delta t}\left(z_{0}+\int_{0}^{t}\delta e^{\delta s}c_{s}^{*,i}X^{*,i}_{s}ds\right)\geq e^{-\delta t}z_{0}
	\end{equation}
	yielding
	\begin{equation}
		\sup_{t\in[0,T]}\mathbb{E}\left[\left(Z^{*,i}_{t}\right)^{q}\right]\leq\left(e^{-\delta T}z_{0}\right)^{q}:=C_{q}.
	\end{equation}
	Thus, we obtain the estimation in \textbf{(ii)}.
	
	\textbf{(iii)}  The estimation about $\left(\overline{Z}^{*,n},\overline{Z}\right)$ can be derived by a similar argument as the proof of Lemma \ref{lemma1}-\textbf{(ii)} and we only consider the estimation about $\left(\overline{X}^{*,n}_{T}, \overline{X}_{T}\right)$. 
	
	Recall the inequality $\left|e^{a}-e^{b}\right|\leq\max\left(e^{a}, e^{b}\right)\left|a-b\right|$. Then, it follows from (\ref{orgfixpower}), (\ref{zxpower}) and  {H\"{o}lder} inequality  that 
	\begin{equation*}
		\begin{split}
			&\mathbb{E}\left[\left|\left(\overline{X}^{*,n}_{T}\right)^{\gamma}-\left(\overline{X}_{T}\right)^{\gamma}\right|^{2}\right]\\&\leq|\gamma|^{2}\mathbb{E}\left[\max\left[\left(\overline{X}^{*,n}_{T}\right)^{\gamma}, \left(\overline{X}_{T}\right)^{\gamma}\right]^{2}\left|\frac{1}{n}\sum_{i=1}^{n}\log\left(X^{*,i}_{T}\right)-\mathbb{E}\left[\log\left(X^{*}_{T}\right)\right]\right|^{2}\right]\\&\leq|\gamma|^{2}\mathbb{E}\left[\max\left[\left(\overline{X}^{*,n}_{T}\right)^{\gamma}, \left(\overline{X}_{T}\right)^{\gamma}\right]^{4}\right]^{\frac{1}{2}}\mathbb{E}\left[\left|\frac{1}{n}\sum_{i=1}^{n}\log\left(X^{i,*}_{T}\right)-\mathbb{E}\left[\log\left(X^{*}_{T}\right)\right]\right|^{4}\right]^{\frac{1}{2}}. 
		\end{split}
	\end{equation*}
	Note that 
	\begin{equation}
		\mathbb{E}\left[\max\left[\left(\overline{X}^{*,n}_{T}\right)^{\gamma}, \left(\overline{X}_{T}\right)^{\gamma}\right]^{4}\right]\leq\mathbb{E}\left[\left(\overline{X}^{*,n}_{T}\right)^{4\gamma }\right]+\mathbb{E}\left[\left(\overline{X}_{T}\right)^{4\gamma}\right].
	\end{equation}
	By the independence of $X^{*,i}$, we have 
	\begin{equation}
		\begin{split}
			\mathbb{E}\left[\left(\overline{X}^{*,n}_{T}\right)^{4\gamma }\right]=\mathbb{E}\left[\prod_{i=1}^{n}\left(X^{*,i}_{T}\right)^{\frac{4\gamma }{n}}\right]=\prod_{i=1}^{n}\mathbb{E}\left[\left(X^{*,i}_{T}\right)^{\frac{4\gamma }{n}}\right].
		\end{split}
	\end{equation}
	Then, by Lemma \ref{lemma2power}-(i), we have that there exists a constant $C$ independent of $n$ such that
	\begin{equation}\label{6.27}
		\begin{split}
			\prod_{i=1}^{n}&\mathbb{E}\left[\left(X^{*,i}_{T}\right)^{\frac{4\gamma }{n}}\right]\leq C.
		\end{split}
	\end{equation}
	Proposition \ref{5.26.2} yields 
	\begin{equation}
		\mathbb{E}\left[\left(\overline{X}_{T}\right)^{4\gamma}\right]\leq (C_{2})^{4\gamma},
	\end{equation}
	as such, combining with (\ref{6.27}), there exists a constant $C$ independent of  $n$ such that
	\begin{equation}
		\mathbb{E}\left[\max\left[\left(\overline{X}^{*,n}_{T}\right)^{\gamma}, \left(\overline{X}_{T}\right)^{\gamma}\right]^{4}\right]^{\frac{1}{2}}\leq C.
	\end{equation}
	Therefore, it is enough to estimate the following term:
	\begin{equation}
		\mathbb{E}\left[\left|\frac{1}{n}\sum_{i=1}^{n}\log\left(X^{*,i}_{T}\right)-\mathbb{E}\left[\log\left(X^{*}_{T}\right)\right]\right|^{4}\right]^{\frac{1}{2}}.
	\end{equation}
     It is easily seen that
	\begin{equation*}
		\begin{split}
			\mathbb{E}\left[\left|\frac{1}{n}\sum_{i=1}^{n}\log\left(X^{*,i}_{T}\right)-\mathbb{E}\left[\log\left(X^{*}_{T}\right)\right]\right|^{4}\right]&\leq C\mathbb{E}\left[\left|\frac{1}{n}\sum_{i=1}^{n}\log\left(X^{*,i}_{T}\right)-\frac{1}{n}\sum_{i=1}^{n}\mathbb{E}\left[\log\left(X^{*,i}_{T}\right)\right]\right|^{4}\right]\\&+C\mathbb{E}\left[\left|\frac{1}{n}\sum_{i=1}^{n}\mathbb{E}\left[\log\left(X^{*,i}_{T}\right)\right]-\mathbb{E}\left[\log\left(X^{*}_{T}\right)\right]\right|^{4}\right].
		\end{split}
	\end{equation*} 
By a similar argument as the proof of  Lemma \ref{lemma1}-(ii), we have
	\begin{align}
		&\mathbb{E}\left[\left|\frac{1}{n}\sum_{i=1}^{n}\mathbb{E}\left[\log\left(X^{*,i}_{T}\right)\right]-\mathbb{E}\left[\log\left(X^{*}_{T}\right)\right]\right|^{4}\right]=O(n^{-2}),\\
		&\mathbb{E}\left[\left|\frac{1}{n}\sum_{i=1}^{n}\log\left(X^{i,*}_{T}\right)-\frac{1}{n}\sum_{i=1}^{n}\mathbb{E}\left[\log\left(X^{*,i}_{T}\right)\right]\right|^{4}\right]=O(n^{-2}).
	\end{align}
	Hence,
	\begin{equation}
	\mathbb{E}\left[\left|\frac{1}{n}\sum_{i=1}^{n}\log\left(X^{i,*}_{T}\right)-\mathbb{E}\left[\log\left(X^{*}_{T}\right)\right]\right|^{4}\right]^{\frac{1}{2}}=O(n^{-1}).
	\end{equation}
	Therefore, we obtain  
	\begin{equation}
		\mathbb{E}\left[\left|\left(\overline{X}^{*,n}_{T}\right)^{\gamma}-\left(\overline{X}_{T}\right)^{\gamma}\right|^{2}\right]=O(n^{-1}),
	\end{equation}
	which completes the proof.
\end{proof}
We are now ready to prove Theorem \ref{6.25power}.
\begin{proof}
	For $i\in\left\{1,\cdots,n\right\}$, let $Z^{i}$ and $Z^{*,i}$ be the habit formation process of agent $i$ under an arbitrary admissible control $\left(\pi^{i},c^{i}\right)\in\mathcal{A}^{i,p}_{0}$ and under the control defined via the candidate strategy $\left(\pi^{*,i},c^{*,i}\right)$ in (\ref{candipower}), respectively. For ease of presentation, let us introduce
	\begin{align}
		&\overline{Z}^{*,n,-i}:=\frac{1}{n}\sum_{j\neq i}Z^{*,j}_{t},\,\ t\in[0,T],\\
		&\overline{X}^{*,n,-i}_{T}:=\left(\prod_{j\neq i}X^{*,j}_{T}\right)^{\frac{1}{n}}.
	\end{align}
	Without loss of generality, we assume $\alpha(i)=1$. Then,  
	\begin{equation*}
		J_{i}\left((\pi^{*,i},c^{*,i}),(\mathbf{\pi}^{*},\mathbf{c}^{*})^{-i}\right):=\mathbb{E}\left[\int_{0}^{T}\frac{1}{p_{1}}\left(\frac{c^{*,i}_{t}X^{*,i}_{t}}{\left(\overline{Z}^{*,n}_{t}\right)^{\theta_{1}}}\right)^{p_{1}}dt+\frac{1}{p_{1}}\left(\frac{X^{*,i}_{T}}{\left(\overline{X}^{*,n}_{T}\right)^{\theta_{1}}}\right)^{p_{1}}\right]
	\end{equation*}
	and 
	\begin{equation*}
		\begin{split}
			J_{i}&\left((\pi^{i},c^{i}),(\mathbf{\pi}^{*},\mathbf{c}^{*})^{-i}\right):=\\&\mathbb{E}\left[\int_{0}^{T}\frac{1}{p_{1}}\left(\frac{c^{i}_{t}X^{i}_{t}}{\left(\overline{Z}^{*,n,-i}_{t}+\frac{1}{n}Z^{i}_{t}\right)^{\theta_{1}}}\right)^{p_{1}}dt+\frac{1}{p_{1}}\left(\frac{X^{i}_{T}}{\left(\overline{X}^{*,n,-i}_{T}\cdot\left(X^{i}_{T}\right)^{\frac{1}{n}}\right)^{\theta_{1}}}\right)^{p_{1}}\right],
		\end{split}
	\end{equation*}
	where $X^{*,i}$ (resp. $X^{i}$) obeys the dynamics (\ref{canwealthpower}) (resp. (\ref{dynapower}) under an arbitrary admissible control $\left(\pi^{i},c^{i}\right)$). 
	
	To show (\ref{appnashpower}) in Definition \ref{Appnashpower}, let us introduce an auxiliary optimal control problem: for $\left(\overline{Z}, \overline{X}_{T}\right)$ being a fixed point of (\ref{orgfixpower}), define
	\begin{equation*}
		\sup_{\left(\pi,c\right)\in\mathcal{A}^{i,p}_{0}}J_{i}\left(\pi,c;\overline{Z},\overline{X}_{T}\right):=\sup_{\left(\pi,c\right)\in\mathcal{A}^{i,p}_{0}}\mathbb{E}\left[\int_{0}^{T}\frac{1}{p_{1}}\left(\frac{c_{t}X^{i}_{t}}{\left(\overline{Z}_{t}\right)^{\theta_{1}}}\right)^{p_{1}}dt+\frac{1}{p_{1}}\left(\frac{X^{i}_{T}}{\left(\overline{X}_{T}\right)^{\theta_{1}}}\right)^{p_{1}}\right].
	\end{equation*}
	It is easy to verify that the optimal strategy of the auxiliary  problem coincides with $\left(\pi^{*,i},c^{*,i}\right)$ constructed in (\ref{candipower}).  Then, 
	\begin{equation}\label{appinepower}
		\begin{split}
			&J_{i}\left((\pi^{i},c^{i}),(\mathbf{\pi}^{*},\mathbf{c}^{*})^{-i}\right)-J_{i}(\mathbf{(\pi^{*},c^{*})})\\&=
			J_{i}\left((\pi^{i},c^{i}),(\mathbf{\pi}^{*},\mathbf{c}^{*})^{-i}\right)-\sup_{\left(\pi,c\right)\in\mathcal{A}^{i,p}_{0}}J_{i}\left(\pi,c;\overline{Z},\overline{X}_{T}\right)\\&+\sup_{\left(\pi,c\right)\in\mathcal{A}^{i,p}_{0}}J_{i}\left(\pi,c;\overline{Z},\overline{X}_{T}\right)
			-J_{i}(\mathbf{(\pi^{*},c^{*})})\\&\leq J_{i}\left((\pi^{i},c^{i}),(\mathbf{\pi}^{*},\mathbf{c}^{*})^{-i}\right)-J_{i}\left(\pi^{i},c^{i};\overline{Z},\overline{X}_{T}\right)\\&+\sup_{\left(\pi,c\right)\in\mathcal{A}^{i,p}_{0}}J_{i}\left(\pi,c;\overline{Z},\overline{X}_{T}\right)
			-J_{i}(\mathbf{(\pi^{*},c^{*})}).
		\end{split}
	\end{equation}
For  the first term of RHS of (\ref{appinepower}), we have 
	\begin{equation}\label{app1power}
		\begin{split}
			J_{i}&\left((\pi^{i},c^{i}),(\mathbf{\pi}^{*},\mathbf{c}^{*})^{-i}\right)-J_{i}\left(\pi^{i},c^{i};\overline{Z},\overline{X}_{T}\right)\\&=\mathbb{E}\left[\int_{0}^{T}\frac{1}{p_{1}}\left(\frac{c^{i}_{t}X^{i}_{t}}{\left(\overline{Z}^{*,n,-i}_{t}+\frac{1}{n}Z^{i}_{t}\right)^{\theta_{1}}}\right)^{p_{1}}dt-\int_{0}^{T}\frac{1}{p_{1}}\left(\frac{c^{i}_{t}X^{i}_{t}}{\left(\overline{Z}_{t}\right)^{\theta_{1}}}\right)^{p_{1}}dt\right]\\&+\mathbb{E}\left[\frac{1}{p_{1}}\left(\frac{X^{i}_{T}}{\left(\overline{X}^{*,n,-i}_{T}\cdot\left(X^{i}_{T}\right)^{\frac{1}{n}}\right)^{\theta_{1}}}\right)^{p_{1}}-\frac{1}{p_{1}}\left(\frac{X^{i}_{T}}{\left(\overline{X}_{T}\right)^{\theta_{1}}}\right)^{p_{1}}\right]\\&:=I^{(1)}_{i}+I^{(2)}_{i}.
		\end{split}
	\end{equation}
	Then,
	\begin{equation*}
		\begin{split}
			I^{\left(1\right)}_{i}&=\mathbb{E}\left[\int_{0}^{T}\frac{1}{p_{1}}\left(\frac{c^{i}_{t}X^{i}_{t}}{\left(\overline{Z}^{*,n,-i}_{t}+\frac{1}{n}Z^{i}_{t}\right)^{\theta_{1}}}\right)^{p_{1}}dt-\int_{0}^{T}\frac{1}{p_{1}}\left(\frac{c^{i}_{t}X^{i}_{t}}{\left(\overline{Z}^{*,n}_{t}\right)^{\theta_{1}}}\right)^{p_{1}}dt\right]\\&+\mathbb{E}\left[\int_{0}^{T}\frac{1}{p_{1}}\left(\frac{c^{i}_{t}X^{i}_{t}}{\left(\overline{Z}^{*,n}_{t}\right)^{\theta_{1}}}\right)^{p_{1}}dt-\int_{0}^{T}\frac{1}{p_{1}}\left(\frac{c^{i}_{t}X^{i}_{t}}{\left(\overline{Z}_{t}\right)^{\theta_{1}}}\right)^{p_{1}}dt\right]\\&:=I^{(3)}_{i}+I^{(4)}_{i},\\
				\end{split}
		\end{equation*}
	\begin{equation*}
	\begin{split}
				I^{(2)}_{i}&=\mathbb{E}\left[\frac{1}{p_{1}}\left(\frac{X^{i}_{T}}{\left(\overline{X}^{*,n,-i}_{T}\cdot\left(X^{i}_{T}\right)^{\frac{1}{n}}\right)^{\theta_{1}}}\right)^{p_{1}}-\frac{1}{p_{1}}\left(\frac{X^{i}_{T}}{\left(\overline{X}^{*,n}_{T}\right)^{\theta_{1}}}\right)^{p_{1}}\right]\\&+\mathbb{E}\left[\frac{1}{p_{1}}\left(\frac{X^{i}_{T}}{\left(\overline{X}^{*,n}_{T}\right)^{\theta_{1}}}\right)^{p_{1}}-\frac{1}{p_{1}}\left(\frac{X^{i}_{T}}{\left(\overline{X}_{T}\right)^{\theta_{1}}}\right)^{p_{1}}\right]\\&:=I^{(5)}_{i}+I^{\left(6\right)}_{i}.
		\end{split}
	\end{equation*}
	For the term $I^{(3)}_{i}$, it is clear that 
	\begin{equation}
		I^{(3)}_{i}=\mathbb{E}\left[\int_{0}^{T}\frac{1}{p_{1}}\left(c^{i}_{t}X^{i}_{t}\right)^{p_{1}}\left[\left(\overline{Z}^{*,n,-i}_{t}+\frac{1}{n}Z^{i}_{t}\right)^{-\theta_{1}p_{1}}-\left(\overline{Z}^{*,n}_{t}\right)^{-\theta_{1}p_{1}}\right]dt\right].
	\end{equation}
	Note that $p_{1}\in(0,1)$ and $\theta_{1}\in(0,1]$, using the inequality $$\left|a^{\theta_{1}p_{1}}-b^{\theta_{1}p_{1}}\right|\leq\theta_{1}p_{1}|a-b|\max\left\{a^{\theta_{1}p_{1}-1}, b^{\theta_{1}p_{1}-1}\right\}$$ for all $a,b>0$, we can derive on $\left\{Z^{*,i}_{t}>Z^{i}_{t}\right\}$:
	\begin{equation*}
		\begin{split}
			&\left(\overline{Z}^{*,n,-i}_{t}+\frac{1}{n}Z^{i}_{t}\right)^{-\theta_{1}p_{1}}-\left(\overline{Z}^{*,n}_{t}\right)^{\theta_{1}p_{1}}=\frac{\left[\left(\overline{Z}^{*,n}_{t}\right)^{\theta_{1}p_{1}}-\left(\overline{Z}^{*,n,-i}_{t}+\frac{1}{n}Z^{i}_{t}\right)^{\theta_{1}p_{1}}\right]}{\left(\overline{Z}^{*,n,-i}_{t}+\frac{1}{n}Z^{i}_{t}\right)^{\theta_{1}p_{1}}\left(\overline{Z}^{*,n}_{t}\right)^{\theta_{1}p_{1}}}\\
			&\leq\frac{\theta_{1}p_{1}}{n}\frac{1}{\left(\overline{Z}^{*,n,-i}_{t}\right)^{\theta_{1}p_{1}}\left(\overline{Z}^{*,n}_{t}\right)^{\theta_{1}p_{1}}}\left(Z^{*,i}_{t}-Z^{i}_{t}\right)\max\left\{\left(\overline{Z}^{*,n}_{t}\right)^{\theta_{1}p_{1}-1}, \left(\overline{Z}^{*,n,-i}_{t}+\frac{1}{n}Z^{i}_{t}\right)^{\theta_{1}p_{1}-1}\right\}\\&\leq\frac{\theta_{1}p_{1}}{n}\frac{1}{\left(\overline{Z}^{*,n,-i}_{t}\right)^{2\theta_{1}p_{1}}}Z^{*,i}_{t}\left(\overline{Z}^{*,n,-i}_{t}\right)^{\theta_{1}p_{1}-1}\\&=\frac{\theta_{1}p_{1}}{n}Z^{*,i}_{t}\left(\overline{Z}^{*,n,-i}_{t}\right)^{-\theta_{1}p_{1}-1}.
		\end{split}
	\end{equation*}
	Obviously, the above inequality trivially holds on  $\left\{Z^{*,i}_{t}\leq Z^{i}_{t}\right\}$. Using Jensen's inequality, we obtain 
	\begin{equation*}
		\left(\overline{Z}^{*,n,-i}_{t}\right)^{-\theta_{1}p_{1}-1}=\left(\frac{1}{n}\sum_{j\neq i}Z^{*,j}_{t}\right)^{-\theta_{1}p_{1}-1}\leq\left(\frac{n-1}{n}\right)^{-\theta_{1}p_{1}-1}\frac{1}{n-1}\sum_{j\neq i}\left(Z^{*,j}_{t}\right)^{-\theta_{1}p_{1}-1}.
	\end{equation*}
	Hence, using {H\"{o}lder} inequality for any $q_{1}, q_{2}>1$ satisfying $\frac{1}{q_{1}}+\frac{1}{q_{2}}+p_{1}=1$, we have from  Lemma \ref{lemma2power}:
	\begin{equation*}
		\begin{split}
			I^{(3)}_{i}&\leq\mathbb{E}\left[\int_{0}^{T}\frac{1}{p_{1}}\left(c^{i}_{t}X^{i}_{t}\right)^{p_{1}}\frac{\theta_{1}p_{1}}{n}Z^{*,i}_{t}\left(\overline{Z}^{*,n,-i}_{t}\right)^{-\theta_{1}p_{1}-1}dt\right]\\&\leq
			\frac{\theta_{1}}{n}\left(\frac{n-1}{n}\right)^{-\theta_{1}p_{1}-1}\frac{1}{n-1}\sum_{j\neq i}\mathbb{E}\left[\int_{0}^{T}\left(c^{i}_{t}X^{i}_{t}\right)^{p_{1}}Z^{*,i}_{t}\left(Z^{*,j}_{t}\right)^{-\theta_{1}p_{1}-1}dt\right]\\&\leq	\frac{\theta_{1}}{n}\left(\frac{n-1}{n}\right)^{-\theta_{1}p_{1}-1}\frac{1}{n-1}\sum_{j\neq i}\int_{0}^{T}\mathbb{E}\left[c^{1}_{t}X^{1}_{t}\right]^{p_{1}}\mathbb{E}\left[\left(Z^{*,i}_{t}\right)^{q_{1}}\right]^{\frac{1}{q_{1}}}\mathbb{E}\left[\left(Z^{*,j}_{t}\right)^{(-\theta_{1}p_{1}-1)q_{2}}\right]^{\frac{1}{q_{2}}}dt\\&\leq	\frac{\theta_{1}}{n}\left(\frac{n-1}{n}\right)^{-\theta_{1}p_{1}-1}\frac{1}{n-1}\sum_{j\neq i}C_{q_{1}}^{\frac{1}{q_{1}}}C^{\frac{1}{q_{2}}}_{(-\theta_{1}p_{1}-1)q_{2}}\left(\int_{0}^{T}\mathbb{E}\left[c^{i}_{t}X^{i}_{t}\right]^{p_{1}}dt\right)\\&\leq\frac{\theta_{1}}{n}\left(\frac{n-1}{n}\right)^{-\theta_{1}p_{1}-1}C_{q_{1}}^{\frac{1}{q_{1}}}C^{\frac{1}{q_{2}}}_{(-\theta_{1}p_{1}-1)q_{2}}T^{1-p_{1}}\left(\int_{0}^{T}\mathbb{E}\left[c^{i}_{t}X^{i}_{t}\right]dt\right)^{p_{1}}=O(n^{-1}),
		\end{split}
	\end{equation*}
	where the constant $C_{q}$ with $q\in(1,\infty)\cup(-\infty,0)$ is given in Lemma \ref{lemma2power}-\textbf{(ii)}.  In the last line of above estimation, we have just used {H\"{o}lder} inequality and the fact that for any $q>0$ and $\left(\pi^{i},c^{i}\right)\in\mathcal{A}^{i,p}_{0}$, 
	\begin{equation}\label{generalest}
		\begin{split}
			\sup_{t\in[0,T]}\mathbb{E}\left[\left(c^{i}_{t}X^{i}_{t}\right)^{q}\right]\leq M^{q}C_{q}=O(1),
		\end{split}
	\end{equation}
	where $M$ and  $C_{q}$ are constants defined in Assumption \ref{assump6} and  Lemma \ref{lemma2power}-\textbf{(i)}.
	
	Similarly,
	\begin{equation*}
		\begin{split}
			I^{(4)}_{i}&\leq\frac{1}{p_{1}}\mathbb{E}\left[\int_{0}^{T}\left(c^{i}_{t}X^{i}_{t}\right)^{p_{1}}\frac{1}{\left(\overline{Z}^{*,n}_{t}\right)^{\theta_{1}p_{1}}\left(\overline{Z}_{t}\right)^{\theta_{1}p_{1}}}\left[\left(\overline{Z}_{t}\right)^{\theta_{1}p_{1}}-\left(\overline{Z}^{*,n}_{t}\right)^{\theta_{1}p_{1}}\right]dt\right]\\\leq\theta_{1}&\mathbb{E}\left[\int_{0}^{T}\left(c^{i}_{t}X^{i}_{t}\right)^{p_{1}}\frac{1}{\left(\overline{Z}^{*,n}_{t}\right)^{\theta_{1}p_{1}}\left(\overline{Z}_{t}\right)^{\theta_{1}p_{1}}}\left|\overline{Z}_{t}-\overline{Z}^{*,n}_{t}\right|\max\left\{\left(\overline{Z}_{t}\right)^{\theta_{1}p_{1}-1},\left(\overline{Z}^{*,n}_{t}\right)^{\theta_{1}p_{1}-1}\right\}dt\right]\\\leq\theta_{1}&\mathbb{E}\left[\int_{0}^{T}\left(c^{i}_{t}X^{i}_{t}\right)^{p_{1}}\left|\overline{Z}_{t}-\overline{Z}^{*,n}_{t}\right|\max\left\{\left(\overline{Z}_{t}\right)^{-\theta_{1}p_{1}-1},\left(\overline{Z}^{*,n}_{t}\right)^{-\theta_{1}p_{1}-1}\right\}dt\right]\\\leq\theta_{1}&\mathbb{E}\left[\int_{0}^{T}\left(c^{i}_{t}X^{i}_{t}\right)^{p_{1}}\left|\overline{Z}_{t}-\overline{Z}^{*,n}_{t}\right|\left[\left(\overline{Z}_{t}\right)^{-\theta_{1}p_{1}-1}+\left(\overline{Z}^{*,n}_{t}\right)^{-\theta_{1}p_{1}-1}\right]dt\right].
		\end{split}
	\end{equation*}
	Using {H\"{o}lder} inequality, Lemma \ref{lemma2power}-\textbf{(iii)}, Proposition \ref{5.26.5} and (\ref{generalest}), we have for any given  admissible control $\left(\pi^{i},c^{i}\right)\in\mathcal{A}^{i,p}_{0}$,
	\begin{equation*}
		\begin{split}
			\mathbb{E}&\left[\int_{0}^{T}\left(c^{i}_{t}X^{i}_{t}\right)^{p_{1}}\left|\overline{Z}_{t}-\overline{Z}^{*,n}_{t}\right|\left(\overline{Z}_{t}\right)^{-\theta_{1}p_{1}-1}dt\right]\\&\leq\left(e^{-\delta T}z_{0}\right)^{-\theta_{1}p_{1}-1}\int_{0}^{T}\mathbb{E}\left[\left(c^{i}_{t}X^{i}_{t}\right)^{2p_{1}}\right]^{\frac{1}{2}}\mathbb{E}\left[\left|\overline{Z}_{t}-\overline{Z}^{*,n}_{t}\right|^{2}\right]^{\frac{1}{2}}dt\\&\leq C\sup_{t\in[0,T]}\mathbb{E}\left[\left|\overline{Z}_{t}-\overline{Z}^{*,n}_{t}\right|^{2}\right]^{\frac{1}{2}}=O(n^{-\frac{1}{2}}).
		\end{split}
	\end{equation*}
	On the other hand, by Jensen's inequality and {H\"{o}lder} inequality with $q_{1},q_{2}\geq2$ satisfying $\frac{1}{q_{1}}+\frac{1}{q_{2}}+\frac{1}{2}=1$, it follows that
	\begin{equation}
		\begin{split}
			\mathbb{E}&\left[\int_{0}^{T}\left(c^{i}_{t}X^{i}_{t}\right)^{p_{1}}\left|\overline{Z}_{t}-\overline{Z}^{*,n}_{t}\right|\left(\overline{Z}^{*,n}_{t}\right)^{-\theta_{1}p_{1}-1}dt\right]\\&\leq\frac{1}{n}\sum_{j=1}^{n}\int_{0}^{T}\mathbb{E}\left[\left(c^{i}_{t}X^{i}_{t}\right)^{p_{1}}\left|\overline{Z}_{t}-\overline{Z}^{*,n}_{t}\right|\left(\overline{Z}^{*,j}_{t}\right)^{-\theta_{1}p_{1}-1}dt\right]\\&\leq\frac{1}{n}\sum_{j=1}^{n}\int_{0}^{T}\mathbb{E}\left[\left(c^{i}_{t}X^{i}_{t}\right)^{q_{1}p_{1}}\right]^{\frac{1}{q_{1}}}\mathbb{E}\left[\left|\overline{Z}_{t}-\overline{Z}^{*,n}_{t}\right|^{2}\right]^{\frac{1}{2}}\mathbb{E}\left[\left(\overline{Z}^{*,j}_{t}\right)^{-(\theta_{1}p_{1}+1)q_{2}}\right]^{\frac{1}{q_{2}}}dt\\&\leq C\sup_{t\in[0,T]}\mathbb{E}\left[\left|\overline{Z}_{t}-\overline{Z}^{*,n}_{t}\right|^{2}\right]^{\frac{1}{2}}=O(n^{-\frac{1}{2}}),
		\end{split}
	\end{equation}
	where we have just used Lemma \ref{lemma2power}-\textbf{(ii)} and (\ref{generalest}) in the last inequality.
	Hence,  
	\begin{equation}
		I^{(1)}_{i}=I^{(3)}_{i}+I^{(4)}_{i}=O(n^{-\frac{1}{2}}).
	\end{equation}
	Recall that 
	\begin{equation}
		\begin{split}
			I^{(5)}_{i}=\frac{1}{p_{1}}\mathbb{E}\left[\left(X^{i}_{T}\right)^{p_{1}}\left(\frac{1}{\left(\overline{X}^{*,n,-i}_{T}\cdot\left(X^{i}_{T}\right)^{\frac{1}{n}}\right)^{\theta_{1}p_{1}}}-\frac{1}{\left(\overline{X}^{*,n}_{T}\right)^{\theta_{1}p_{1}}}\right)\right].
		\end{split}
	\end{equation}
	Similarly, using the inequality $\left|a^{\theta_{1}p_{1}}-b^{\theta_{1}p_{1}}\right|\leq\theta_{1}p_{1}|a-b|\max\left\{a^{\theta_{1}p_{1}-1}, b^{\theta_{1}p_{1}-1}\right\}$ for all $a,b>0$, we can derive on $\left\{X^{*,i}_{T}>X^{i}_{T}\right\}$:
	\begin{equation}\label{6.29.1}
		\begin{split}
			&\frac{1}{p_{1}}\left(X^{i}_{T}\right)^{p_{1}}\frac{\left(X^{*,i}_{T}\right)^{\frac{\theta_{1}p_{1}}{n}}-\left(X^{i}_{T}\right)^{\frac{\theta_{1}p_{1}}{n}}}{\left(\overline{X}^{*,n,-i}_{T}\left(X^{i}_{T}\right)^{\frac{1}{n}}\left(X^{*,i}_{T}\right)^{\frac{1}{n}}\right)^{\theta_{1}p_{1}}}\\&\leq\frac{\theta_{1}}{n}\left(X^{i}_{T}\right)^{p_{1}}\frac{\max\left\{\left(X^{*,i}_{T}\right)^{\frac{\theta_{1}p_{1}}{n}-1},         
				\left(X^{i}_{T}\right)^{\frac{\theta_{1}p_{1}}{n}-1}\right\}}{\left(\overline{X}^{*,n,-i}_{T}\left(X^{i}_{T}\right)^{\frac{1}{n}}\left(X^{*,i}_{T}\right)^{\frac{1}{n}}\right)^{\theta_{1}p_{1}}}\left(X^{*,i}_{T}-X^{i}_{T}\right)\\&\leq\frac{\theta_{1}}{n}\left(X^{i}_{T}\right)^{p_{1}}\frac{      
				\left(X^{i}_{T}\right)^{\frac{\theta_{1}p_{1}}{n}-1}}{\left(\overline{X}^{*,n,-i}_{T}\left(X^{i}_{T}\right)^{\frac{1}{n}}\left(X^{*,i}_{T}\right)^{\frac{1}{n}}\right)^{\theta_{1}p_{1}}}X^{*,i}_{T}\\&=\frac{\theta_{1}}{n}\left(X^{i}_{T}\right)^{p_{1}-1}\left(X^{*,i}_{T}\right)^{1-\frac{\theta_{1}p_{1}}{n}}\left(\overline{X}^{*,n,-i}_{T}\right)^{-\theta_{1}p_{1}}.
		\end{split}
	\end{equation}
	Obviously, the above inequality trivially holds on $\left\{X^{*,i}_{T}\leq X^{i}_{T}\right\}$. It follows from Jensen's inequality that
	\begin{equation}\label{6.29.2}
		\begin{split}
			\left(\overline{X}^{*,n,-i}_{T}\right)^{-\theta_{1}p_{1}}&=\exp\left\{-\theta_{1}p_{1}\frac{n-1}{n}\frac{1}{n-1}\sum_{j\neq i}\log\left(X^{*,j}_{T}\right)\right\}\\&\leq\frac{1}{n-1}\sum_{j\neq i}\left(X^{*,j}_{T}\right)^{-\theta_{1}p_{1}\left(\frac{n-1}{n}\right)}.
		\end{split}
	\end{equation}
	Hence, combining (\ref{6.29.1}) and (\ref{6.29.2}), and using  {H\"{o}lder} inequality with $q_{1},q_{2},q_{3}>1$ satisfying $\frac{1}{q_{1}}+\frac{1}{q_{2}}+\frac{1}{q_{3}}=1$, we obtain
	\begin{equation*}
		\begin{split}
			I^{(5)}_{i}&\leq\frac{\theta_{1}}{n}\frac{1}{n-1}\sum_{j\neq i}\mathbb{E}\left[\left(X^{i}_{T}\right)^{p_{1}-1}\left(X^{*,i}_{T}\right)^{1-\frac{\theta_{1}p_{1}}{n}}\left(X^{*,j}_{T}\right)^{-\theta_{1}p_{1}\left(\frac{n-1}{n}\right)}\right]\\&\leq\frac{\theta_{1}}{n}\frac{1}{n-1}\sum_{j\neq i}\mathbb{E}\left[\left(X^{i}_{T}\right)^{(p_{1}-1)q_{1}}\right]^{\frac{1}{q_{1}}}\mathbb{E}\left[\left(X^{*,i}_{T}\right)^{(1-\frac{\theta_{1}p_{1}}{n})q_{2}}\right]^{\frac{1}{q_{2}}}\mathbb{E}\left[\left(X^{*,j}_{T}\right)^{-\theta_{1}p_{1}\left(\frac{n-1}{n}\right)q_{3}}\right]^{\frac{1}{q_{3}}}\\&\leq\frac{\theta_{1}}{n}\frac{1}{n-1}\sum_{j\neq i}C^{\frac{1}{q_{1}}}_{(p_{1}-1)q_{1}}\left(\sup_{n\in\mathbb{N}}C^{\frac{1}{q_{2}}}_{(1-\frac{\theta_{1}p_{1}}{n})q_{2}}\right)\left(\sup_{n\in\mathbb{N}}C^{\frac{1}{q_{3}}}_{-\theta_{1}p_{1}\left(\frac{n-1}{n}\right)q_{3}}\right)=O(\frac{1}{n}),
		\end{split}
	\end{equation*}
	where the constant $C_{q}$ is defined in (\ref{keyx}).
	
	For the term $I^{(6)}_{i}$, using {H\"{o}lder} inequality and Lemma  \ref{lemma2power}-\textbf{(iii)}, we have 
	\begin{equation*}
		\begin{split}
			I^{(6)}_{i}&=\frac{1}{p_{1}}\mathbb{E}\left[\left(X^{i}_{T}\right)^{p_{1}}\left(\left(\overline{X}^{*,n}_{T}\right)^{-\theta_{1}p_{1}}-\left(\overline{X}_{T}\right)^{-\theta_{1}p_{1}}\right)\right]\\
			&\leq\frac{1}{p_{1}}\mathbb{E}\left[\left(X^{i}_{T}\right)^{p_{1}}\left|\left(\overline{X}^{*,n}_{T}\right)^{-\theta_{1}p_{1}}-\left(\overline{X}_{T}\right)^{-\theta_{1}p_{1}}\right|\right]\\&\leq\frac{1}{p_{1}}\mathbb{E}\left[\left(X^{i}_{T}\right)^{2p_{1}}\right]^{\frac{1}{2}}\mathbb{E}\left[\left|\left(\overline{X}^{*,n}_{T}\right)^{-\theta_{1}p_{1}}-\left(\overline{X}_{T}\right)^{-\theta_{1}p_{1}}\right|^{2}\right]^{\frac{1}{2}}\\&=O(n^{-\frac{1}{2}}).
		\end{split}
	\end{equation*}
	Therefore, it is clear that 
	\begin{equation}
		I^{(2)}_{i}=I^{(5)}_{i}+I^{(6)}_{i}=O(n^{-\frac{1}{2}}).
	\end{equation}
	Using (\ref{app1power}), we have
	\begin{equation}\label{Est1power}
		\begin{split}
			J_{i}&\left((\pi^{i},c^{i}),(\mathbf{\pi}^{*},\mathbf{c}^{*})^{-i}\right)-J_{i}\left(\pi^{i},c^{i};\overline{Z},\overline{X}_{T}\right)=I^{(1)}_{i}+I^{\left(2\right)}_{i}=O(n^{-\frac{1}{2}}).
		\end{split}
	\end{equation}
	For the second term of RHS of (\ref{appinepower}), we have 
	\begin{equation}
		\sup_{\left(\pi^{i},c^{i}\right)\in\mathcal{A}^{i,p}_{0}}J_{i}\left(\pi^{i},c^{i};\overline{Z},\overline{X}_{T}\right)
		-J_{i}(\mathbf{(\pi^{*},c^{*})})=J_{i}\left(\pi^{*,i},c^{*,i};\overline{Z},\overline{X}_{T}\right)
		-J_{i}(\mathbf{(\pi^{*},c^{*})}).
	\end{equation}
	Then, we  follow the similar argument in showing the convergence error of $I^{(4)}_{i}$ and $I^{(6)}_{i}$ with fixed strategy $\left(\pi^{*,i},c^{*,i}\right)$ to get 
	\begin{equation}\label{Est2power}
		J_{i}\left(\pi^{*,i},c^{*,i};\overline{Z},\overline{X}_{T}\right)
		-J_{i}(\mathbf{(\pi^{*},c^{*})})=O(n^{-\frac{1}{2}}).
	\end{equation}
	Therefore, the estimates (\ref{appinepower}), (\ref{Est1power}) and (\ref{Est2power}) jointly yield (\ref{appnashpower}) with $\epsilon_{n}=O(n^{-\frac{1}{2}})$.
\end{proof}
\section{ Numerical Illustrations of Mean Field Equilibrium}\label{sect5}
In this section, we  numerically illustrate the mean field equilibrium (MFE). {We only focus on the power utility case as it is common in applications and makes more sense in economic}.   For a given  type vector $o=(\mu,\sigma,p, \theta)$, we first numerically illustrate the MFE  in the case of homogeneous agents.  Given proper choices of parameters, we will {plot} some sensitivity results and interpret their financial implications.
 
From Figure \ref{fig1}, Figure \ref{fig2} and Figure \ref{fig3}, it is interesting to see that the monotonicity of mean field habit formation process $\overline{Z}$ depends on the relationship  between the initial habit level $z_{0}$ and the initial wealth level $x_{0}$: when the wealth level is adequate,  $\overline{Z}$ is increasing with respect to time $t$. In this case, the MFE consumption is also increasing in $t$. This observation can be explained by the fact that the competition nature of the problem motivates each agent to consume aggressively. The increasing of consumption can not only make agent obtain higher utility from consumption, but also strategically increase agent's habit level such that the population's average habit level can be lifted up and other competitor's  utility may drop; While, when the initial habit level $z_{0}$ is large and the initial wealth level $x_{0}$ is relatively low,  $\overline{Z}$ is either decreasing or exhibiting an initial decrease followed by an increase over time. One can interpret this pattern that when the habit level of the population is too high, the mean field habit formation process often drops down to a sustainable level and continues with another wave of growth. Furthermore, 
from  Figure \ref{fig:f}, it can be observed that the average habit formation reaches the sustainable level earlier as the population's risk aversion decreases. A similar kind of mean-reverting mechanism is also presented in \cite{2022habit}. The monotonicity of  MFE consumption  in this scenario becomes more {subtle}, highly relying on the model parameters. It turns out that the relationship between $z_{0}$ and $x_{0}$ is also a crucial component of the sensitivity results with regard to the model parameters. We will present some illustrations in the following content.

\begin{figure*} [htbp]
	\centering
	\subfloat[Consumption]{\label{fig:a}
		\includegraphics[scale=0.2]{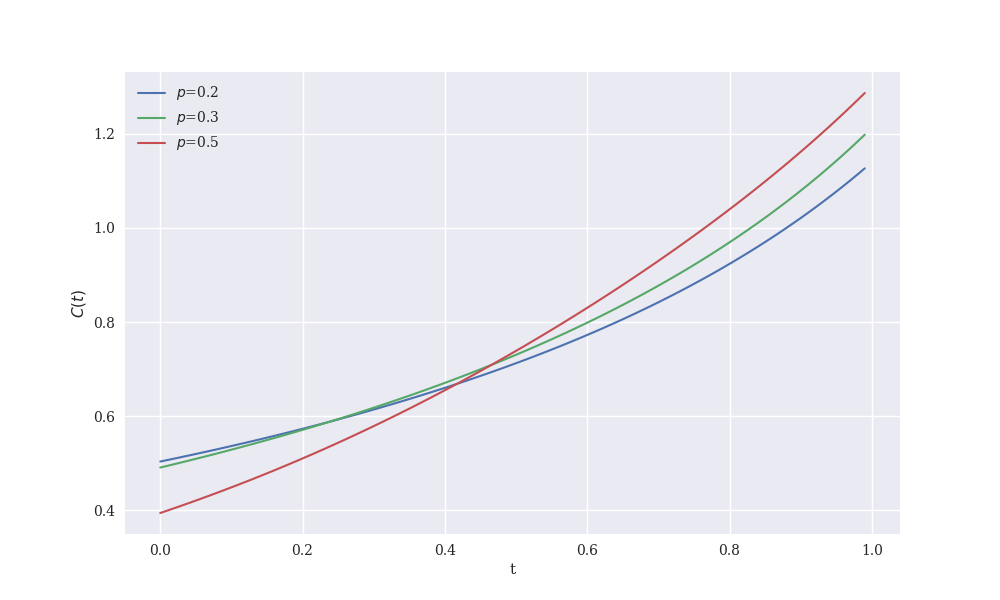}}
	\subfloat[Portfolio]{\label{fig:b}
		\includegraphics[scale=0.2]{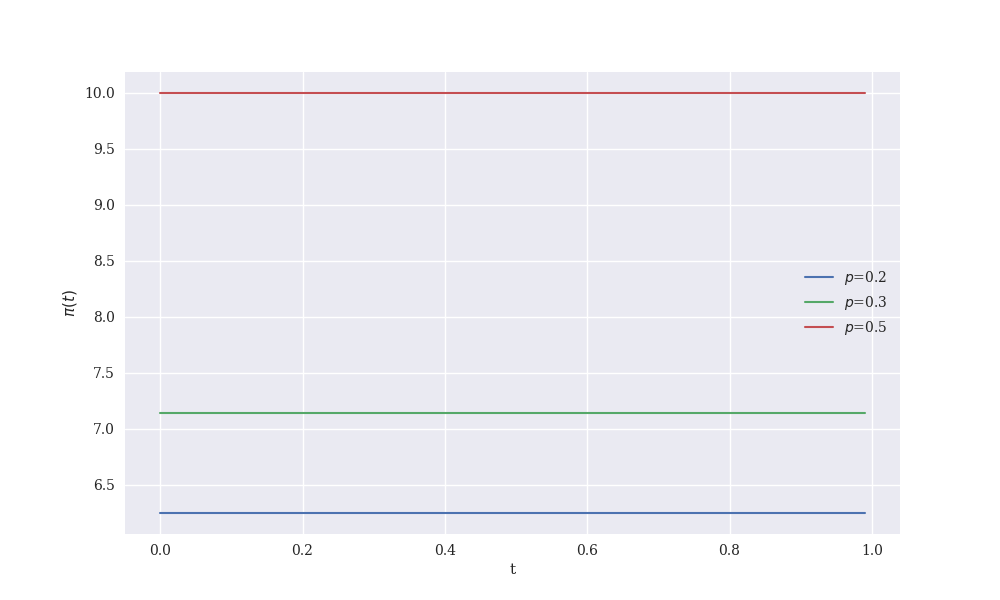}}
	\subfloat[Habit process (solid lines) and  average terminal wealth (dashed lines)]{\label{fig:c}
		\includegraphics[scale=0.2]{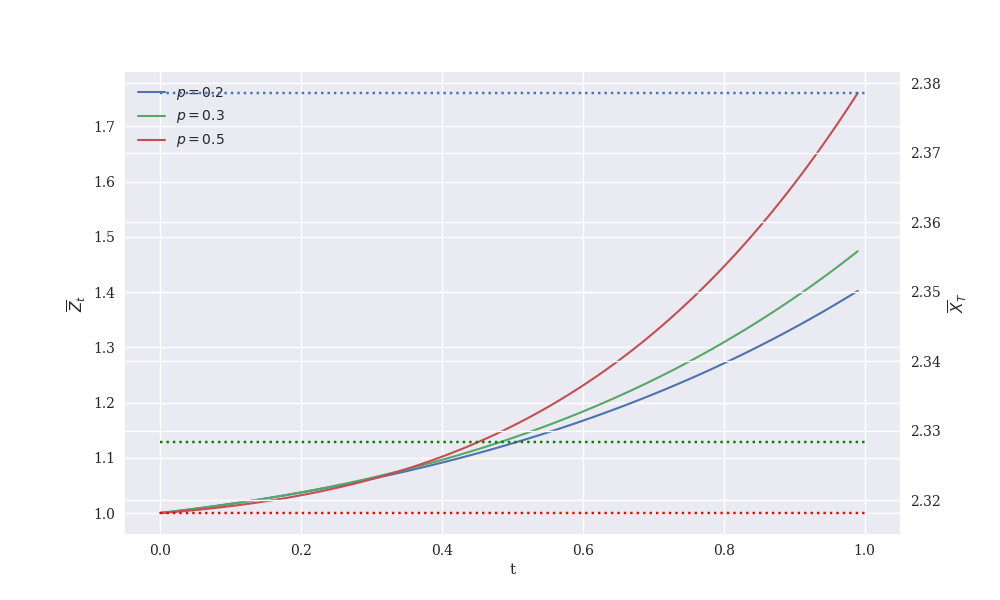}}
	
	\subfloat[Consumption]{\label{fig:d}
		\includegraphics[scale=0.2]{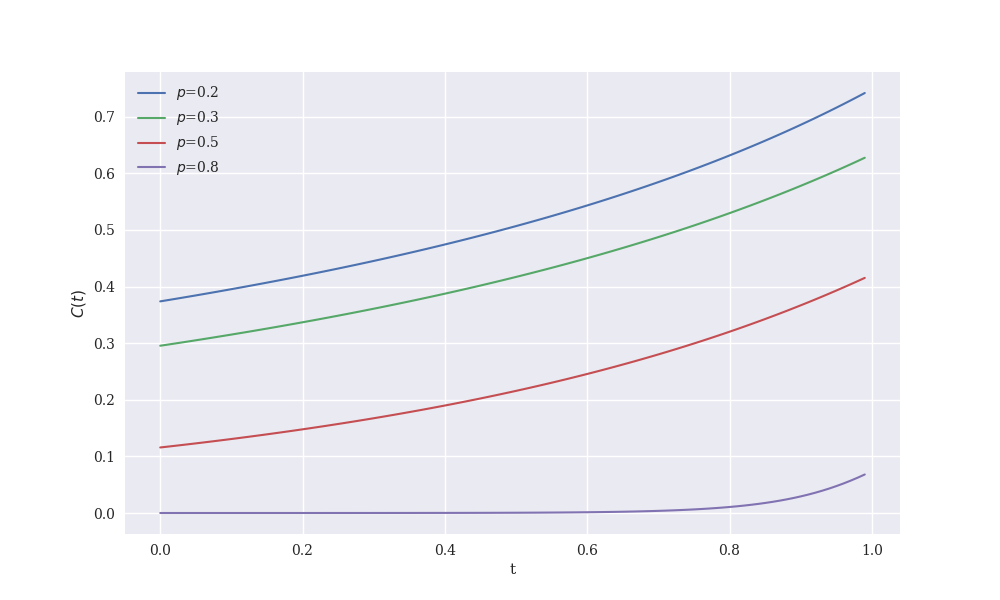}}
	\subfloat[Portfolio]{\label{fig:e}
		\includegraphics[scale=0.2]{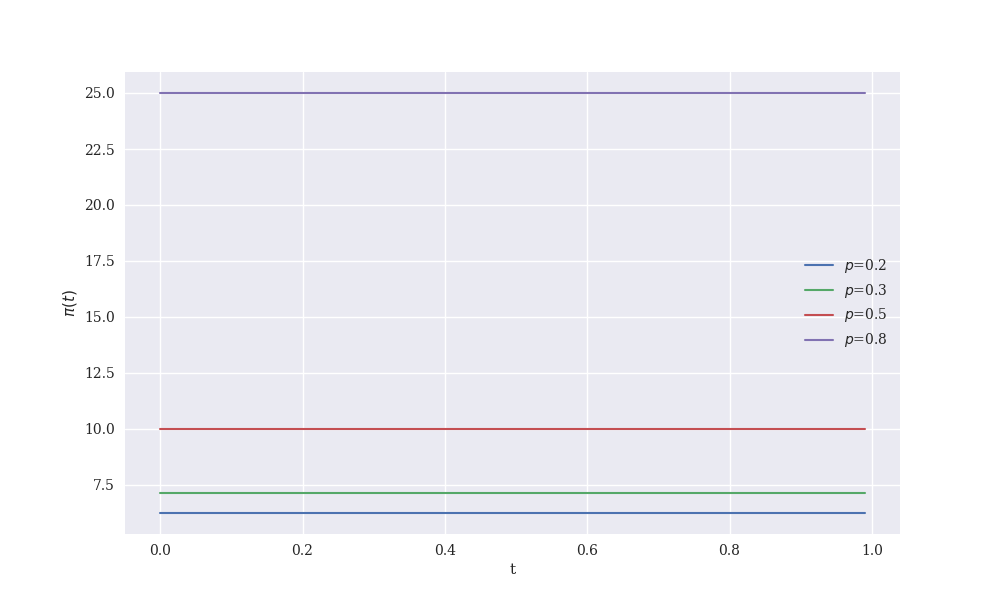}}
	\subfloat[Habit process (solid lines) and  average terminal wealth (dashed lines)]{\label{fig:f}
		\includegraphics[scale=0.2]{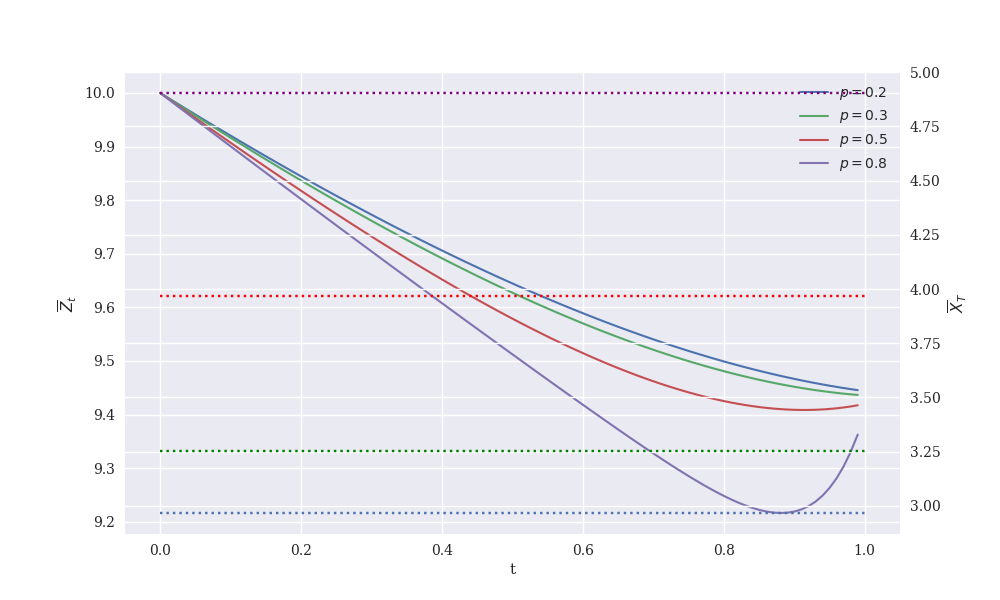}}
	\caption{\textbf{Top panel}: The MFE consumption  $c$, the MFE portfolio $\pi$, the habit formation process $\overline{Z}$ and average terminal wealth $\overline{X}_{T}$ with risk aversion parameters $p=0.2, 0.3$ and $0.5$  in low initial habit level case ($z_{0}=1, x_{0}=5$). \textbf{Bottom panel}: The MFE consumption  $c$, the MFE portfolio $\pi$, the habit formation process $\overline{Z}$ and average terminal wealth $\overline{X}_{T}$ with risk aversion parameters $p=0.2, 0.3, 0.5$ and $0.8$  in high initial habit level case ($z_{0}=10, x_{0}=5$).}  
	\label{fig1} 
\end{figure*}

Let us first illustrate in Figure \ref{fig1} the sensitivity results of the MFE on the risk aversion parameter $p$.
We choose and fix the model parameters $T=1$, $\delta=0.1$, $x_{0}=5$, $z_{0}=1$, $\mu=0.2$, $\sigma=0.2$, $\theta=1$ and take different values $p=0.2$, $0.3$ and $0.5$ in the low initial habit level case, and choose and fix the model parameters $T=1$, $\delta=0.1$, $x_{0}=5$, $z_{0}=10$, $\mu=0.2$, $\sigma=0.2$, $\theta=1$ and take different values $p=0.2$, $0.3$, $0.5$ and  $0.8$ in the high initial habit level case. It can be observed that the MFE portfolio increases with $p$, indicating that an individual investor allocates less money to the risky asset when she is more risk averse. From the top panel, it is interesting to note that after the accumulation over a period of time, both the consumption and the habit formation process  turn out to grow with $p$, showing that agents consume more aggressively in a more risk-seeking environment. As a result, Figure \ref{fig:c} shows that the cautious agents  accumulate more wealth. However, the bottom panel shows that the consumption and the habit formation process decrease with $p$ when the initial habit level is high.  In this case, the risk-seeking agents get richer. This can be explained by the fact that the larger $p$ value implies that the negative impact of the average habit level on expected utility becomes more significant. Therefore, the agent with larger $p$ value chooses to spend less on consumption to rapidly  bring down the average habit process to an appropriate level while investing more in risky assets to increase wealth.

In Figure \ref{fig2}, we numerically illustrate the impact of the habit formation intensity parameter $\delta$. We choose and fix the model parameters $T=1$, $p=0.3$, $x_{0}=5$, $z_{0}=1$, $\mu=0.2$, $\sigma=0.2$, $\theta=1$ and take different values $\delta=0.1$, $0.3$ and $0.5$ in the low initial habit level case, and choose and fix the model parameters $T=1$, $p=0.8$, $x_{0}=8$, $z_{0}=10$, $\mu=0.2$, $\sigma=0.2$, $\theta=1$ and take different values $\delta=0.1$, $0.3$ and $0.5$ in the high initial habit level case. As the MFE portfolio does not depend on $\delta$,  we only consider the MFE consumption  and mean field habit formation process.  From the right panel, we observe that the habit formation process does not exhibit the same monotonicity in $\delta$ in both circumstances. This can be explained by the fact that the habit formation process is defined as the combination of the discounted initial value $z_{0}e^{-\delta t}$ (decreasing in $\delta$) and the weighted average $\int_{0}^{t}\delta e^{\delta(s-t)}C_{s}ds$ (usually increasing in $\delta$).  When the initial level $z_{0}$ is small, as plotted in the top-right panel, the term $z_{0}e^{-\delta t}$ becomes negligible compared to the integral term $\int_{0}^{t}\delta e^{\delta(s-t)}C_{s}ds$, and hence  $\overline{Z}$ is increasing in $\delta$; when the initial habit  $z_{0}$ is large, the term $z_{0}e^{-\delta t}$  initially dominates  the integral term $\int_{0}^{t}\delta e^{\delta(s-t)}C_{s}ds$, and $\overline{Z}$ is decreasing in $\delta$ in the first stage;  The integral term gradually dominates the term $z_{0}e^{-\delta t}$ over time and as a result, after accumulation over some period $\overline{Z}$ is increasing in $\delta$.  We can see from the top-left panel that consumption growth slows down as the value of $\delta$ increases.   Essentially,  $\delta$ affects MFE consumption solely by altering the value of $\overline{Z}$. A higher $\overline{Z}$ leads to a reduced growth in consumption as slowing down consumption growth allows the agent to  avoid the expected utility loss caused by the rapid growth of the habit process. From the bottom-left panel, we observe that the consumption has a hump-shaped pattern, with spending typically  first increasing  and then decreasing in time. This is a new feature comparing to the result in \cite{2020Many}. It can also be observed from Figure \ref{fig:i} that a higher $\delta$  leads to an earlier consumption hump and if $\delta$ is sufficiently small, the consumption exhibits a monotonically increasing trend over time indicating that the agents with high $\delta$ value are more easily to be self-satisfaction.  These
effects are expected since increasing $\delta$ strengthens the habit formation, and all the aforementioned effects are  consequences of the strengthened habit formation.
 \begin{figure*} [htbp]
	\centering
	\subfloat[Consumption]{\label{fig:g}
		\includegraphics[scale=0.25]{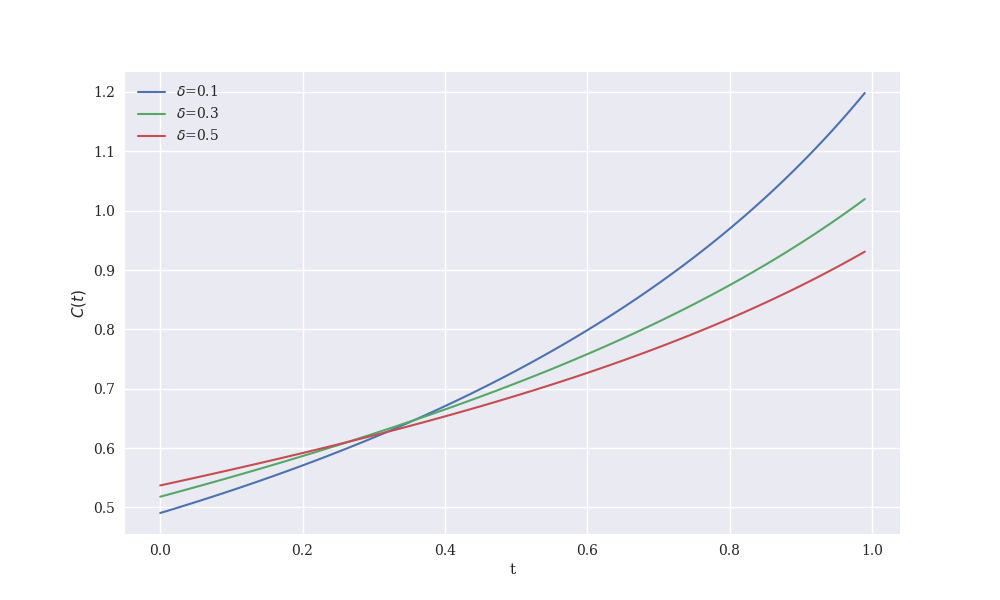}}
	\subfloat[Habit process (solid lines) and average terminal wealth (dashed lines)]{\label{fig:h}
		\includegraphics[scale=0.25]{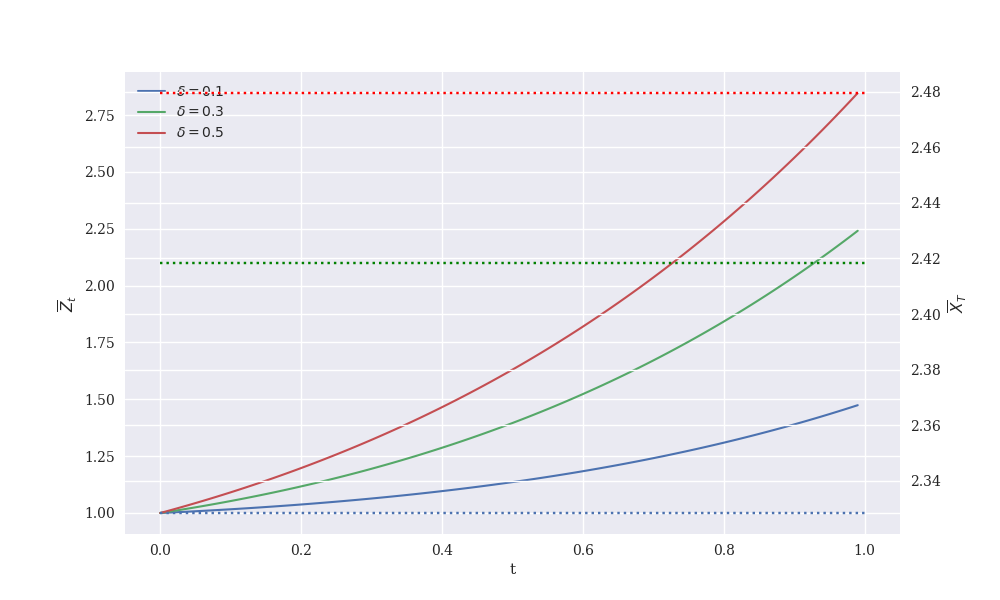}}
	
	\subfloat[Consumption]{\label{fig:i}
		\includegraphics[scale=0.25]{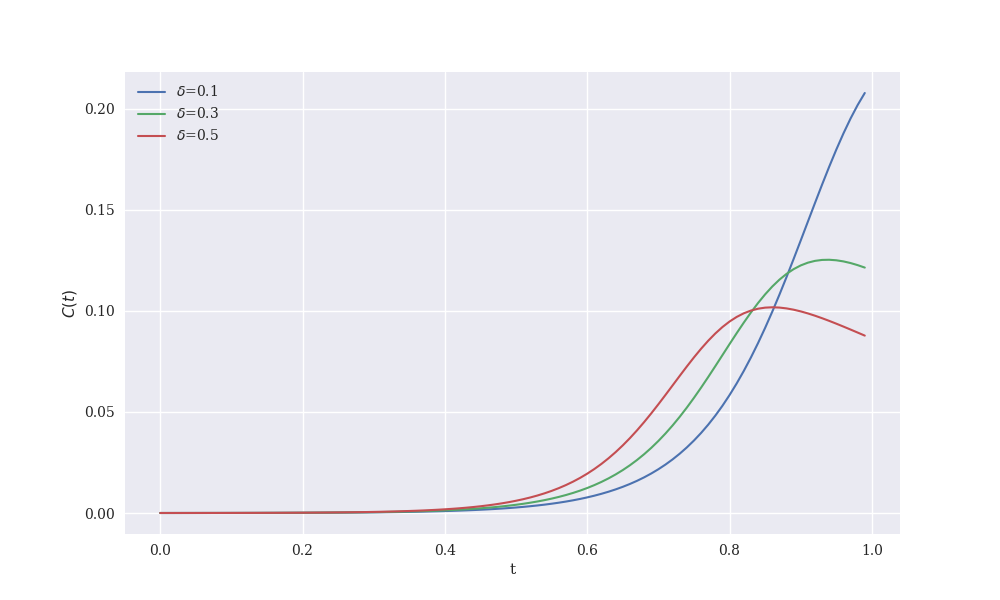}}
	\subfloat[Habit process (solid lines) and average terminal wealth (dashed lines)]{\label{fig:j}
		\includegraphics[scale=0.25]{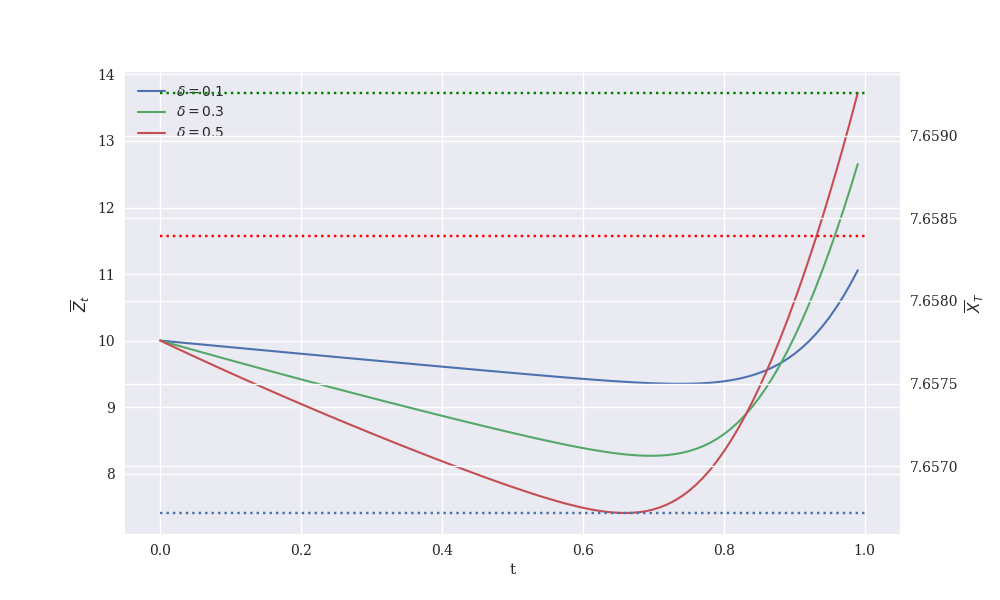}}
	\caption{\textbf{Top panel}: The MFE consumption  $c$, the habit formation process $\overline{Z}$ and average terminal wealth $\overline{X}_{T}$ with habit persistence parameters $\delta=0.1, 0.3$ and $0.5$  in low initial habit case ($z_{0}=1, x_{0}=5$). \textbf{Bottom panel}: The MFE consumption  $c$, the habit formation process $\overline{Z}$ and average terminal wealth $\overline{X}_{T}$ with habit persistence parameters $\delta=0.1, 0.3$ and $0.5$  in high initial habit case ($z_{0}=10, x_{0}=8$).}
	\label{fig2} 
\end{figure*}

We illustrate in Figure \ref{fig3} how the competition parameter $\theta$ affects the MFE.  We choose and fix the model parameters $T=1$, $p=0.3$, $x_{0}=5$, $z_{0}=1$, $\mu=0.2$, $\sigma=0.2$, $\delta=0.1$ and take different values $\theta=0.5$, $0.8$ and $1.0$ in the low initial habit level case, and choose and fix the model parameters $T=1$, $p=0.3$, $x_{0}=5$, $z_{0}=10$, $\mu=0.2$, $\sigma=0.2$, $\delta=0.1$ and take different values $\theta=0.5$, $0.8$ and $1.0$ in the high initial habit level case. By the same reason,  we only consider the MFE consumption and mean field habit formation process.  From the top panel, we  see that both  $\overline{Z}$ and  $c$ increase with $\theta$, indicating that the more competitive the agent is, the higher consumption policy she chooses and the average habit level of the population also gets larger. However, when the initial habit level $z_{0}$ is large, the bottom panel shows that both  $\overline{Z}$ and $c$  decrease with $\theta$. 
This is not surprising since the more competitive agent in this  scenario  aims to consume less in order to  pull down the excessive habit process rapidly  and attain a higher expected utility. 
\begin{figure*} [htbp]
	\centering
	\subfloat[Consumption]{\label{fig:m}
		\includegraphics[scale=0.25]{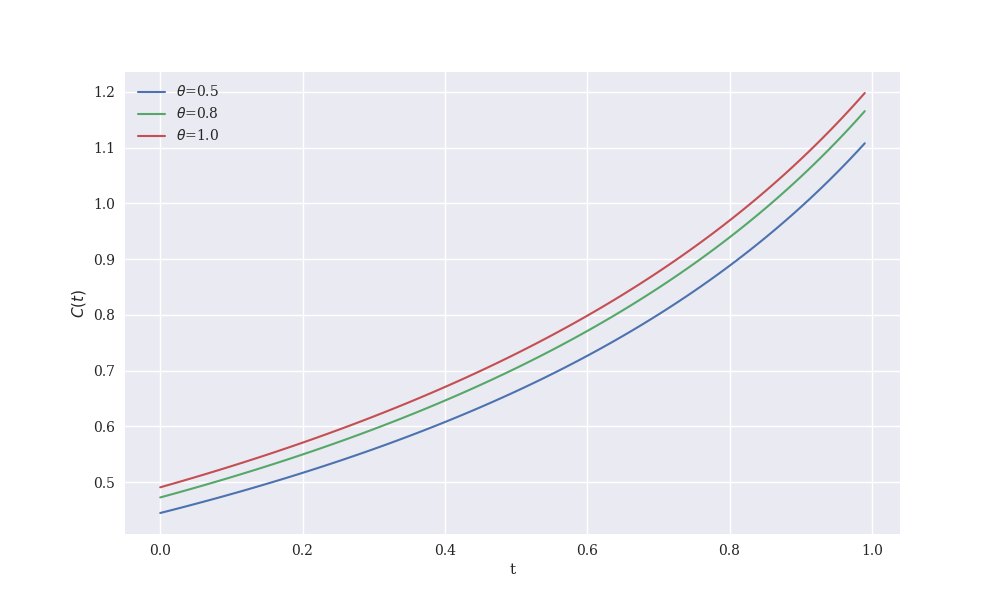}}
	\subfloat[Habit process (solid lines) and average terminal wealth (dashed lines)]{\label{fig:n}
		\includegraphics[scale=0.25]{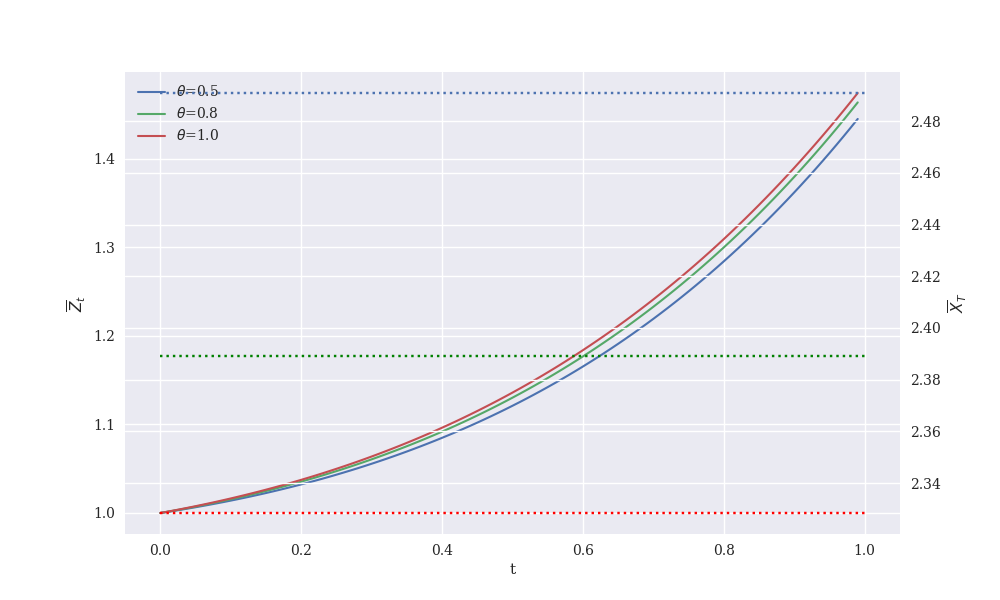}}
	
	\subfloat[Consumption]{\label{fig:o}
		\includegraphics[scale=0.25]{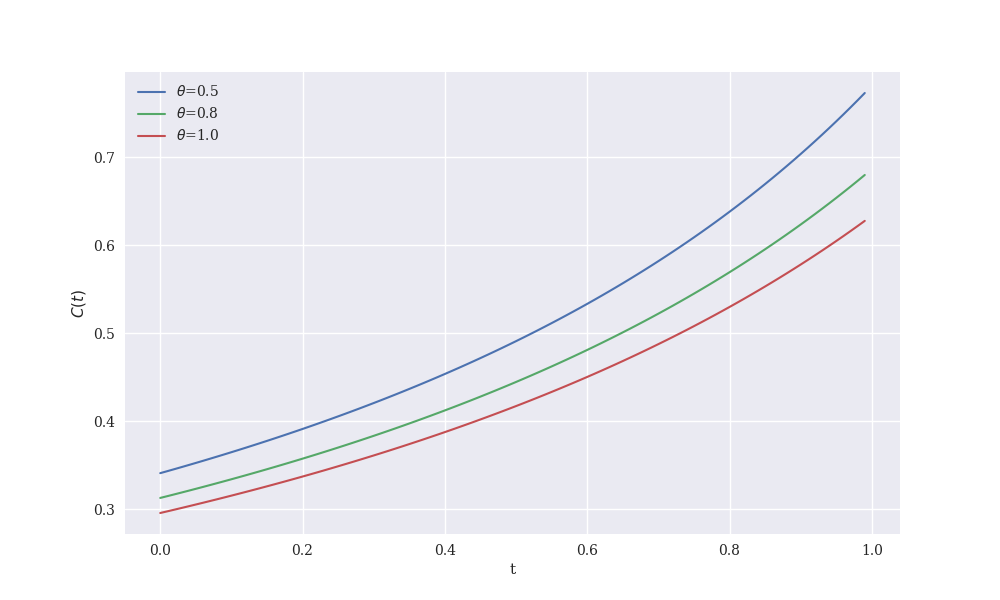}}
	\subfloat[Habit process (solid lines) and average terminal wealth (dashed lines)]{\label{fig:p}
		\includegraphics[scale=0.25]{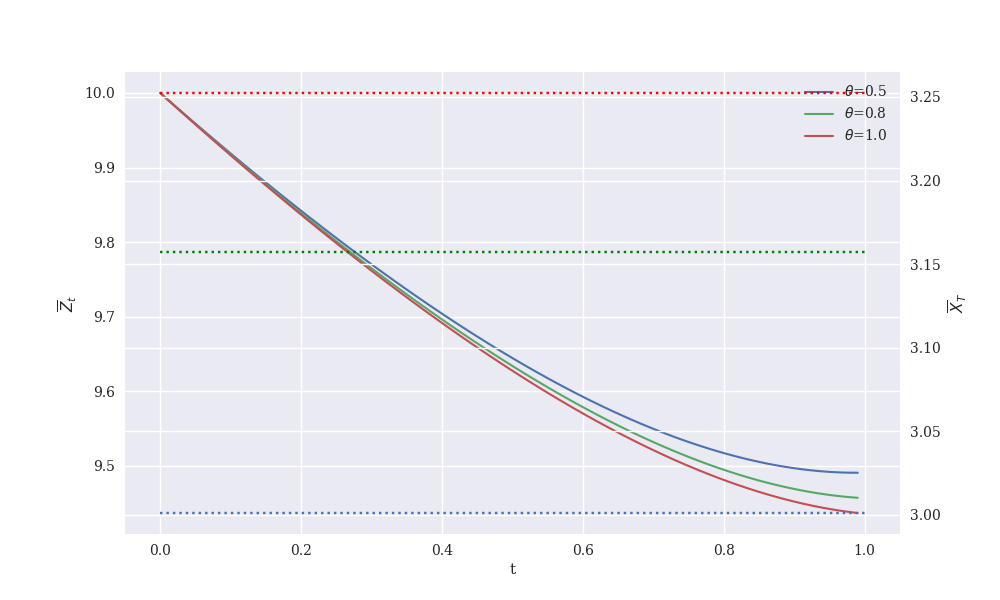}}
	\caption{\textbf{Top panel}: The MFE consumption  $c$, the habit formation process $\overline{Z}$ and average terminal wealth $\overline{X}_{T}$ with competition parameters $\theta=0.5, 0.8$ and $1.0$  in low initial habit case ($z_{0}=1, x_{0}=5$). \textbf{Bottom panel}: The MFE consumption  $c$, the habit formation process $\overline{Z}$ and average terminal wealth $\overline{X}_{T}$ with competition parameters $\theta=0.5, 0.8$ and $1.0$  in high initial habit case ($z_{0}=10, x_{0}=5$).}
	\label{fig3} 
\end{figure*}

\begin{figure*} [htbp]
	\centering
	\subfloat[Consumption]{\label{fig:revise1}
		\includegraphics[scale=0.25]{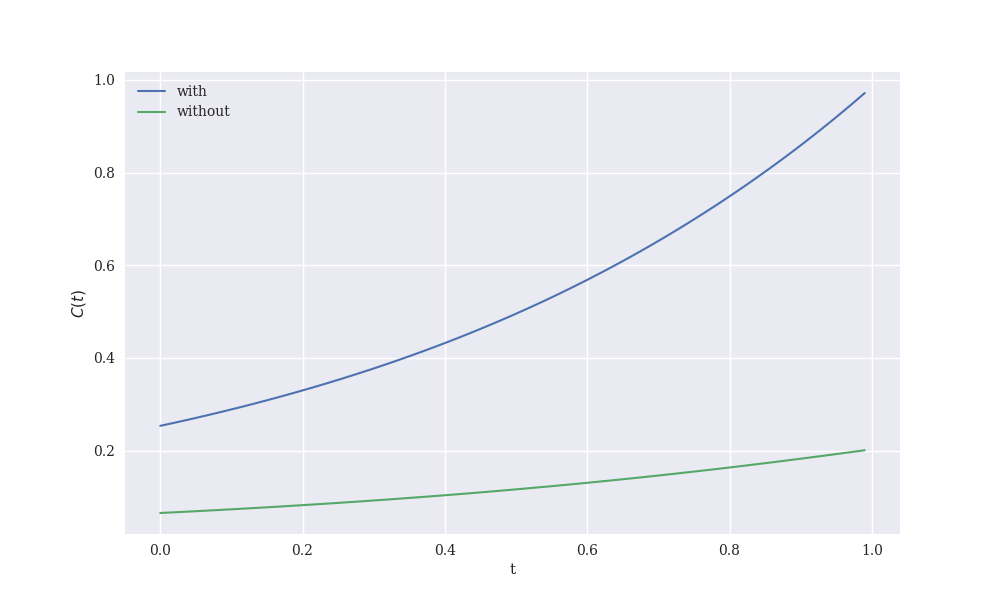}}
	\subfloat[Habit process]{\label{fig:revise2}
		\includegraphics[scale=0.25]{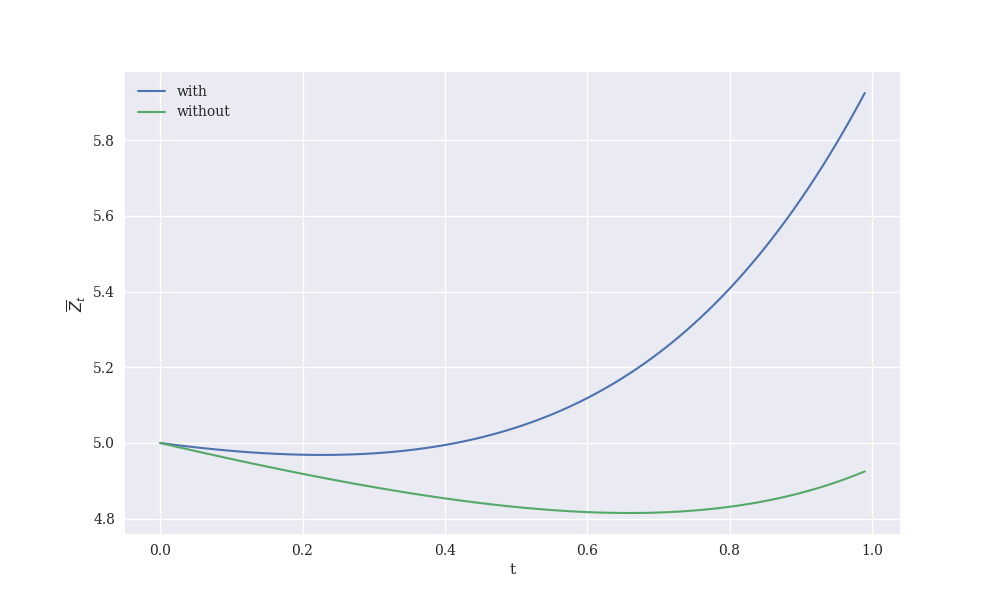}}
	
	\subfloat[Consumption]{\label{fig:revise3}
		\includegraphics[scale=0.25]{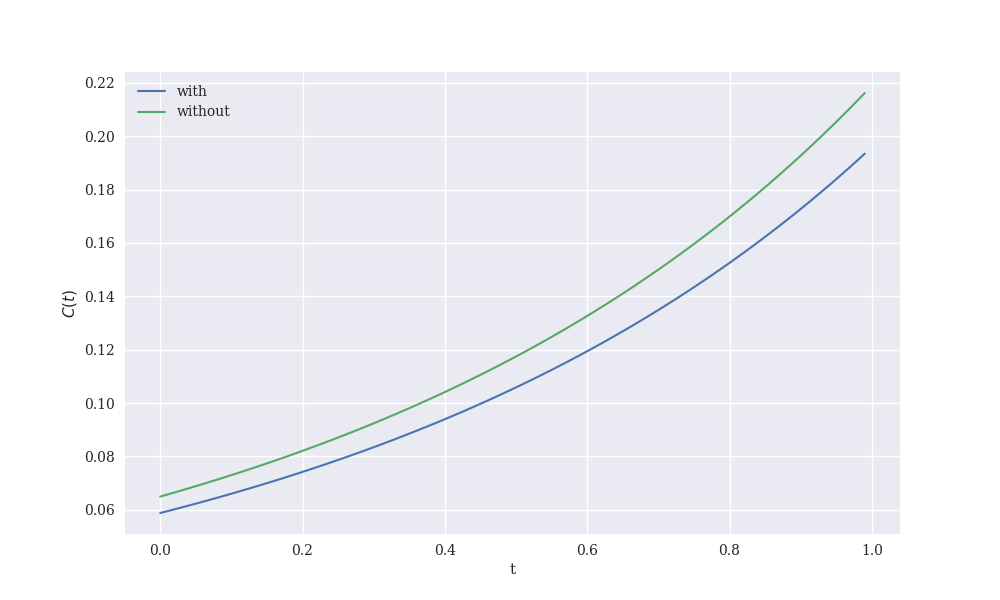}}
	\subfloat[Habit process]{\label{fig:revise4}
		\includegraphics[scale=0.25]{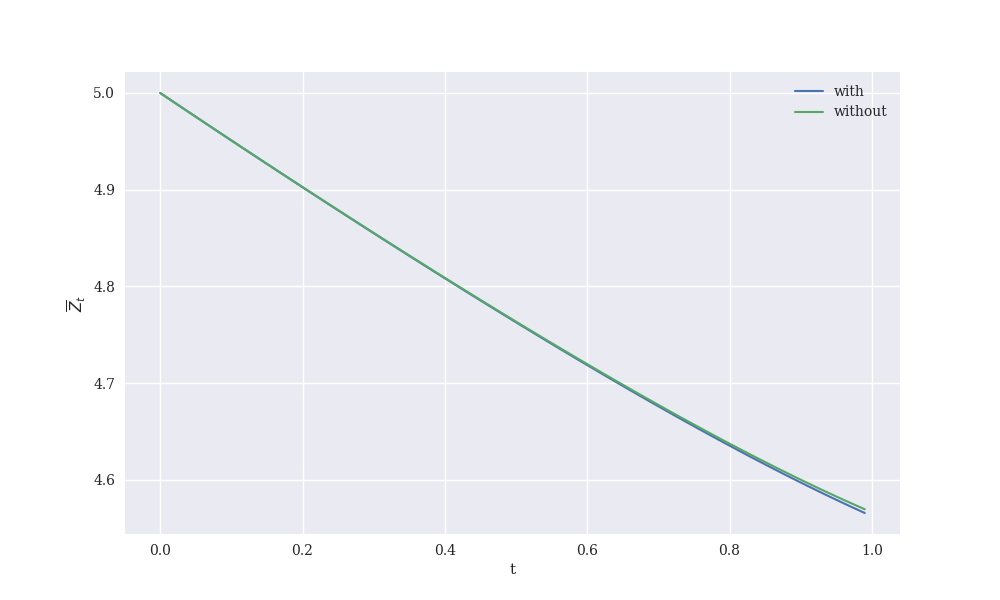}}
	\caption{{\textbf{Top panel}: The MFE consumption  $c$, the habit formation process $\overline{Z}$  in high initial wealth case ($z_{0}=5, x_{0}=10$). \textbf{Bottom panel}: The MFE consumption  $c$, the habit formation process $\overline{Z}$  in low initial wealth case ($z_{0}=5, x_{0}=1$). The blue lines labeled 'with' represent the consumption or habit formation process with relative wealth concerns. The green lines labeled 'without' depict the consumption or habit formation process without relative wealth concerns.}}
	\label{newfig} 
\end{figure*}

{Compared with the results only considering consumption habit formation in \cite{2022habit}, we illustrate the influence of the newly introduced average wealth benchmark on the MFE in Figure \ref{newfig}. We choose and fix the model parameters $T=1$, $p=0.5$, $x_{0}=10$, $z_{0}=5$, $\mu=0.2$, $\sigma=0.2$, $\delta=0.1$ and $\theta=1$  in the high initial wealth level case, and choose and fix the model parameters $T=1$, $p=0.5$, $x_{0}=1$, $z_{0}=5$, $\mu=0.2$, $\sigma=0.2$, $\delta=0.1$  and $\theta=1$ in the low initial wealth level case. We focus solely on the MFE consumption and mean field habit formation process because the the MFE portfolios in both cases are the same. From the top panel, we observe that both the MFE consumption and the mean field habit formation process with relative wealth concerns are higher compared to the scenario without relative wealth concerns.  However, in the case with low initial wealth and high initial habit, the bottom panel shows that both the MFE consumption and the mean field habit formation process with relative wealth concerns are lower. This is reasonable as in the first case, higher initial wealth leads to a significant negative impact from the average wealth benchmark on expected utility. Consequently, rather than choosing to accumulate wealth, agents are more inclined to increase consumption to attain higher expected utility. 
In the second scenario with low initial wealth and high initial habit, 
 concerning the average wealth benchmark, agents are more inclined to decrease consumption, accumulate wealth, and thus attain higher expected  utility.}

\begin{figure*} [htbp]
	\centering
	\subfloat[Consumption]{\label{fig:x}
		\includegraphics[scale=0.2]{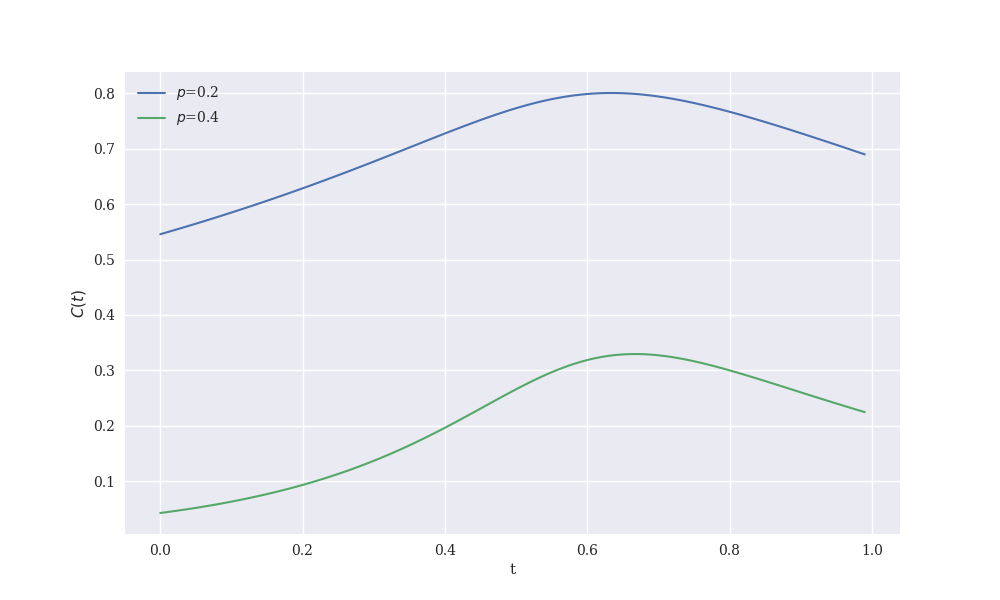}}
	\subfloat[Portfolio]{\label{fig:y}
		\includegraphics[scale=0.2]{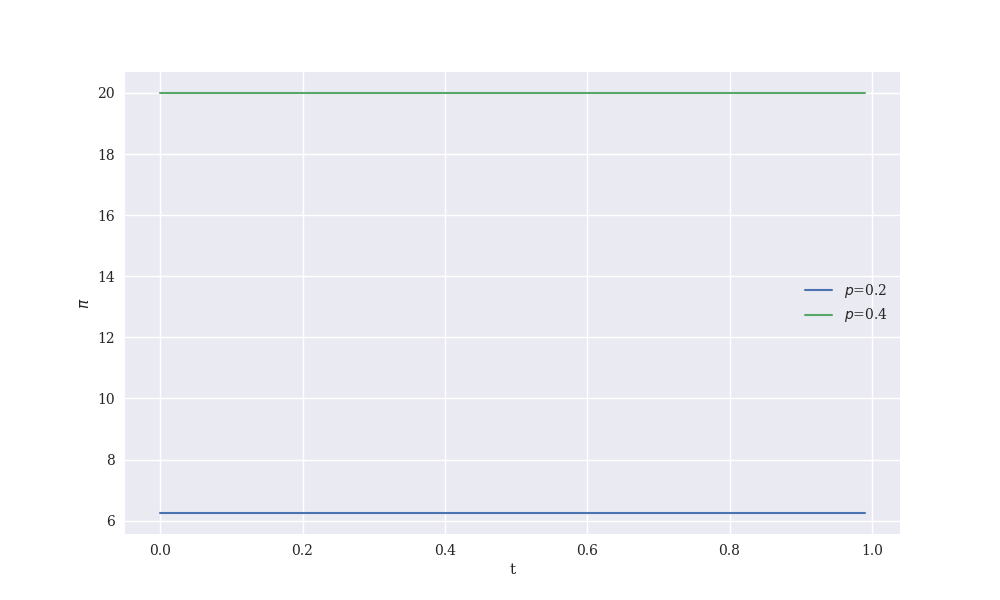}}
	\subfloat[Habit process]{\label{fig:z}
		\includegraphics[scale=0.2]{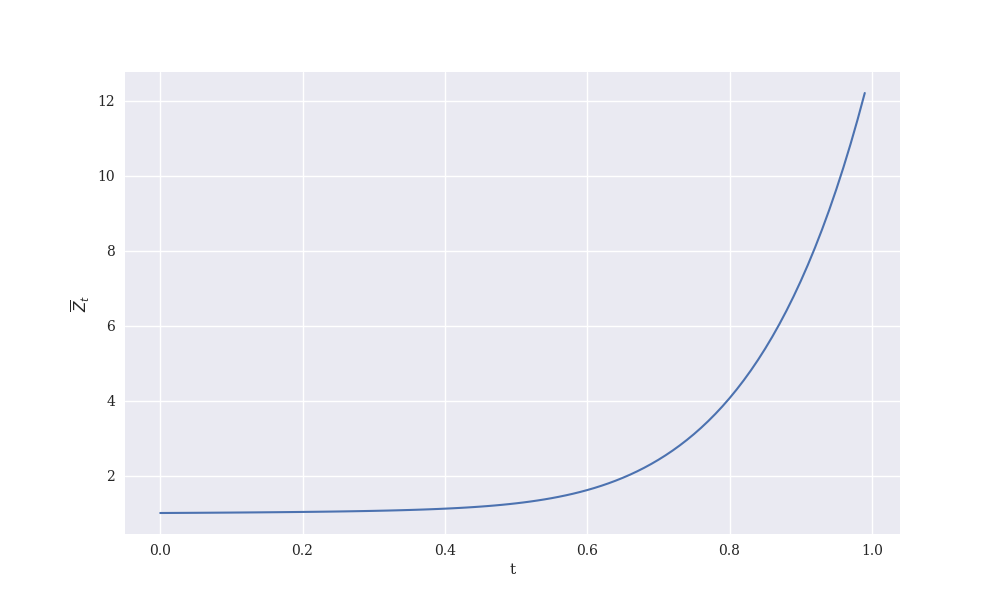}}
	\caption{ The MFE consumption  $c$, the MFE portfolio $\pi$ and the habit formation process $\overline{Z}$ with two types of  agents.}  
	\label{fig4} 
\end{figure*}

At last, we do some numerical experiments for the scenario including heterogeneous agents.  We only show results that differ from those of homogeneous agents.  We assume that there are two different agents in the market with type vectors $(\mu_{1}=0.2, \sigma_{1}=0.2, p_{1}= 0.2, \theta_{1}=1.0)$ and $(\mu_{2}=0.4, \sigma_{2}=0.2, p_{2}= 0.5, \theta_{2}=1.0)$, respectively.  Let us fix model parameters  $T=1$, $x_{0}=5$, $z_{0}=1$, $\delta=0.1$, $F\left(\left\{1\right\}\right)=0.7$ and $F\left(\left\{2\right\}\right)=0.3$. Although the initial habit level is relative low, we still observe form the left panel of Figure \ref{fig4} that both agents display the hump-shaped consumption path. In this case, the risk-averse agent allocates more wealth to consumption and less to risky asset. Combining Figure \ref{fig2} and Figure \ref{fig4}, we can now provide an explanation to the hump-shaped consumption. At the beginning of time horizon, the population's average habit level either decreases over time or increases slowly (see Figure \ref{fig:j} and Figure \ref{fig:z}), and the competition mechanism takes the leading role in each agent's decision making: each agent would increase her consumption to attain a larger expected utility. As time moves on, the average habit level starts to increase rapidly, indicating a high future standard of living.  Each agent is more likely to decrease her consumption   after a period of time such that the growth of the population's habit level may slow down and everyone may suffer less from the high future benchmark.

\section{Conclusions and Open Problems}\label{sect6}
We study the relative investment-consumption games  with  habit formation. Each agent regards the average of habit formation and wealth from all peers  as  benchmarks to evaluate the performance of her decision.   We formulate and solve the mean field game problems for agents having exponential or power utilities. The mean field equilibrium is characterized in analytical form in each problem. For both problems, we also establish the connection to the n-agent game by constructing its approximate Nash equilibrium using the obtained mean field equilibrium and the explicit order of approximation error is derived.

{It is still open how to tackle the our problem with common noise. In this scenario, the mean field habit formation process $\overline{Z}$ and  mean field terminal wealth $\overline{X}_{T}$ are no longer deterministic. The mathematical arguments in the present paper are no longer applicable. One potential approach to address this problem is to follow the arguments in \cite{2022Meanfieldpg} and \cite{2022Meanfieldpgc}, attempting to establish a one-to-one correspondence between the mean field equilibria and solutions to some  Forward-Backward Stochastic Differential Equation (FBSDE).

In the first problem with exponential utility, it is also an open question when imposing a nonnegativity constraints on consumption. The dual transformation will be involved in handling the corresponding HJB equation. It is not hard to show that the solution of the linearised dual PDE admits a probabilistic representation through the Feynman-Kac formula. Then the value function and the the best response  can be obtained in terms of the primal variables after an inverse transform. However, the best response is not fully explicit which brings new challenges for subsequent fixed point problems. Moreover, it is interesting to consider this problem with an additional constraint on consumption; that is, each agent cannot allow their consumption to fall below the external habit level as in \cite{2022habit}. Similar difficulties arise in solving the corresponding stochastic control problem,  which also poses challenges for subsequent fixed point problems.
 
  }
\vskip 10pt
{\bf Acknowledgements.}
The authors acknowledge the support from the National Natural Science Foundation of China (Grant No.12271290 and No.11871036). The authors also thank the members of the group of Actuarial Science and Mathematical Finance at the Department of Mathematical Sciences, Tsinghua University for their feedbacks and useful conversations. We are also particularly grateful to the two anonymous referees and
the Editor  whose suggestions greatly improve the manuscript’s quality.
\vskip 10pt
{\bf Data availability statement.} 
Data sharing  not applicable to this article as no datasets were generated or analyzed during the current study.
\appendix
\renewcommand{\appendixname}{Appendix~\Alph{section}}
\renewcommand{\theequation}{\thesection.\arabic{equation}}

\section{Some Estimations for the Fixed Point System (\ref{new fixedpower})}\label{5.26.4}

In this section, we make some prior estimations on $\left(\overline{X}_{T},\hat{Z}\right)$ in the system (\ref{new fixedpower}).
\begin{proposition}\label{5.26.2}
	For any given $\hat{Z}\in\mathcal{C}_{T,+}$, there exists a unique solution $\overline{X}^{\hat{Z}}_{T}$ to the following equation
	\begin{equation}\label{5.26.1}
		\overline{X}_{T}=C_{2}\exp\left\{-\int_{0}^{T}\sum_{k=1}^{K}\exp\left(\frac{\theta_{k} p_{k}\delta}{1-p_{k}}t\right)\left(\hat{Z}_{t}\right)^{\frac{\theta_{k}p_{k}}{p_{k}-1}}\hat{G}_{k}(t,\overline{X}_{T},\hat{Z})dt\right\},
	\end{equation}
	where $C_{2}$ is defined in (\ref{7.17power}).
	Moreover, we have the following estimation:
	\begin{equation}
		C_{1}\leq\overline{X}^{\hat{Z}}_{T}\leq C_{2},
	\end{equation} 
	where $C_{1}$ and $C_{2}$ are two positive constants only depending on $\left\{o_{k}\right\}_{k=1}^{K}, z_{0}, \delta$ and $T$.
\end{proposition}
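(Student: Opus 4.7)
The plan is to recognize equation (\ref{5.26.1}) as a scalar fixed point equation for the map
\[
F(y) := C_{2}\exp\!\left\{-\int_{0}^{T}\sum_{k=1}^{K}\exp\!\left(\tfrac{\theta_{k}p_{k}\delta}{1-p_{k}}t\right)(\hat{Z}_{t})^{\frac{\theta_{k}p_{k}}{p_{k}-1}}\hat{G}_{k}(t,y,\hat{Z})\,dt\right\},\quad y>0,
\]
and then exploit the monotonic dependence of $\hat{G}_{k}$ on $y$ to obtain both existence/uniqueness and the two-sided bound. A solution $\overline{X}_{T}^{\hat{Z}}$ is exactly a fixed point of $F$.

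First I would establish that $F$ is continuous and strictly decreasing on $(0,\infty)$. Since $p_{k}\in(0,1)$ and $\theta_{k}\in(0,1]$, the exponent $p_{k}\theta_{k}/(1-p_{k})$ is strictly positive, so the first term $e^{a_{k}(T-t)}y^{-p_{k}\theta_{k}/(1-p_{k})}$ in the denominator of $\hat{G}_{k}(t,y,\hat{Z})$ is strictly decreasing in $y$, while the second term is independent of $y$. Hence $\hat{G}_{k}(t,\cdot,\hat{Z})$ is strictly increasing, the integrand is strictly increasing (the factor $(\hat{Z}_{t})^{\theta_{k}p_{k}/(p_{k}-1)}$ is positive), and therefore $F$ is strictly decreasing. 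Continuity follows from dominated convergence once we observe that $\hat{G}_{k}$ is bounded on any compact $y$-range. I would then compute the boundary limits: as $y\to 0^{+}$ the first term in the denominator of $\hat{G}_{k}$ blows up, so $\hat{G}_{k}\to 0$ pointwise and $F(0^{+})=C_{2}$; as $y\to\infty$ the first term vanishes and $F$ tends to a strictly positive value $\leq C_{2}$.

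Existence and uniqueness then follow immediately: the map $y\mapsto F(y)-y$ is continuous and strictly decreasing, equals $C_{2}>0$ in the limit $y\to 0^{+}$, and tends to $-\infty$ as $y\to\infty$. The intermediate value theorem yields a unique $\overline{X}_{T}^{\hat{Z}}\in(0,C_{2}]$ with $\overline{X}_{T}^{\hat{Z}}=F(\overline{X}_{T}^{\hat{Z}})$. The upper bound $\overline{X}_{T}^{\hat{Z}}\leq C_{2}$ is then immediate from $\exp(-\text{nonneg.})\leq 1$.

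The delicate step is the lower bound. Since $F$ is decreasing and $\overline{X}_{T}^{\hat{Z}}\leq C_{2}$, we have $\overline{X}_{T}^{\hat{Z}}=F(\overline{X}_{T}^{\hat{Z}})\geq F(C_{2})$, so it suffices to bound $F(C_{2})$ below by a positive constant depending only on $\{o_{k}\}_{k=1}^{K}, z_{0},\delta,T$. I would drop the nonnegative integral contribution in the denominator of $\hat{G}_{k}(t,C_{2},\hat{Z})$ to obtain
\[
\hat{G}_{k}(t,C_{2},\hat{Z})\leq e^{-a_{k}(T-t)}\,C_{2}^{p_{k}\theta_{k}/(1-p_{k})}.
\]
The exponent $\theta_{k}p_{k}/(p_{k}-1)$ is negative, so using the fact that in the regime of interest $\hat{Z}_{t}\geq z_{0}$ (which holds for any $\hat{Z}$ arising from the ODE in (\ref{new fixedpower}) since the drift there is nonnegative and $\hat{Z}_{0}=z_{0}$), we can estimate $(\hat{Z}_{t})^{\theta_{k}p_{k}/(p_{k}-1)}\leq z_{0}^{\theta_{k}p_{k}/(p_{k}-1)}$. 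Plugging both estimates into the exponent of $F(C_{2})$ gives a deterministic upper bound on the exponent that depends only on $\{o_{k}\}_{k=1}^{K},z_{0},\delta,T$, and thus a lower bound $F(C_{2})\geq C_{1}>0$ of the claimed form. The main obstacle is precisely this last step: pinpointing the correct a priori lower bound on $\hat{Z}$ (furnished by $\hat{Z}_{t}\geq z_{0}$) so that $C_{1}$ can be made independent of $\hat{Z}$; without that bound the integrand in the exponent of $F$ is not uniformly controlled, and the fixed point could collapse to zero.
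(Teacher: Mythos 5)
Your proposal is correct and follows essentially the same route as the paper: existence and uniqueness come from the strict monotonicity of the two sides of (\ref{5.26.1}) in $\overline{X}_{T}$ plus an intermediate-value argument, the upper bound is immediate, and the lower bound uses the same two estimates, namely $\hat{G}_{k}(t,y,\hat{Z})\le e^{a_{k}(t-T)}y^{p_{k}\theta_{k}/(1-p_{k})}$ and $(\hat{Z}_{t})^{\theta_{k}p_{k}/(p_{k}-1)}\le z_{0}^{\theta_{k}p_{k}/(p_{k}-1)}$ (the paper also tacitly assumes $\hat{Z}_{t}\ge z_{0}$, which you rightly flag). The only difference is cosmetic: you take the explicit constant $C_{1}=F(C_{2})$ via the monotonicity of $F$, whereas the paper defines $C_{1}$ as the root of $C_{1}=C_{2}\exp\bigl(-\sum_{k}D_{k}C_{1}^{p_{k}\theta_{k}/(1-p_{k})}\bigr)$; both are positive constants of the required form.
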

\begin{proof}
	As the $LHS$ of (\ref{5.26.1}) is strictly increasing with respect to $\overline{X}_{T}$ and the $RHS$ of (\ref{5.26.1}) is strictly decreasing with respect to $\overline{X}_{T}$, the equation (\ref{5.26.1}) has at most one solution. 
	
	When $\overline{X}_{T}=0$, we get  $LHS<RHS$; When $\overline{X}_{T}$ tends to infinity, we get  $LHS>RHS$. Hence, for any given $\hat{Z}\in\mathcal{C}_{T,+}$, there exists a unique solution $\overline{X}^{\hat{Z}}_{T}$ to (\ref{5.26.1}).
	
	It is obviously to see that $\overline{X}^{\hat{Z}}_{T}\leq C_{2}$. Note that
	\begin{equation}
		\hat{G}_{k}(s,\overline{X}_{T},\hat{Z})\leq e^{ a_{k}(s-T)}\left(\overline{X}_{T}\right)^{\frac{p_{k}\theta_{k}}{1-p_{k}}},
	\end{equation}
	we have from (\ref{5.26.1})
	\begin{equation}
		\begin{split}
			\overline{X}_{T}&\geq C_{2}\exp\left\{-\int_{0}^{T}\sum_{k=1}^{K}\exp\left(\frac{\theta_{k} p_{k}\delta}{1-p_{k}}s\right)z_{0}^{\frac{\theta_{k}p_{k}}{p_{k}-1}}e^{ a_{k}(s-T)}\left(\overline{X}_{T}\right)^{\frac{p_{k}\theta_{k}}{1-p_{k}}}ds\right\}\\
			&= C_{2}\exp\left\{-\sum_{k=1}^{K}D_{k}\left(\overline{X}_{T}\right)^{\frac{p_{k}\theta_{k}}{1-p_{k}}}\right\},
		\end{split}
	\end{equation}
	where  $D_{k}:=z_{0}^{\frac{\theta_{k}p_{k}}{p_{k}-1}}e^{-a_{k}T}\int_{0}^{T}\exp\left[(\frac{\theta_{k}p_{k}\delta}{1-p_{k}}+a_{k})s\right]ds$. 
	Then it is straightforward to get $\overline{X}_{T}>C_{1}$, where $C_{1}$ is the root of the following equation
	\begin{equation}
		C_{1} = C_{2}\exp\left(-\sum_{k=1}^{K}D_{k}C_{1}^{\frac{p_{k}\theta_{k}}{1-p_{k}}}\right),
	\end{equation}
	which completes the proof.
\end{proof}
\begin{proposition}\label{5.26.5}
	If there exists a solution $\left(\overline{X}_{T},\hat{Z}\right)$ of (\ref{new fixedpower}), it holds that
	$$z_{0}\leq\hat{Z}_{t}\leq M(t)\leq M(T),\quad \, \forall t \in [0,T],$$
	where  $M(\cdot)$ is a positive increasing function only depending on $\left(x_{0},z_{0}, \delta,T\right)$ and $\left\{o_{k}\right\}_{k=1}^{K}$.
\end{proposition}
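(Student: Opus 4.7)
The plan is to establish the two inequalities separately, using the structure of the ODE in (\ref{new fixedpower}) together with the bound $\overline{X}_T\le C_2$ from Proposition \ref{5.26.2}.

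First, for the lower bound, I would observe that every factor in the integrand of the ODE for $\hat{Z}$ is positive: $e^{\beta_k t}>0$, $(\hat{Z}_t)^{\theta_k p_k/(p_k-1)}>0$, and by construction $\hat{G}_k, \hat{f}_k, F(\{k\})\ge 0$. Hence $t\mapsto \hat{Z}_t$ is non-decreasing on $[0,T]$, so $\hat{Z}_t\ge \hat{Z}_0=z_0$ automatically.

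Next, for the upper bound, the key observation is that since $p_k\in(0,1)$ and $\theta_k\in(0,1]$, the exponent $\theta_k p_k/(p_k-1)$ is strictly negative. Combined with $\hat{Z}_t\ge z_0$, this yields the pointwise estimate
\begin{equation*}
(\hat{Z}_t)^{\theta_k p_k/(p_k-1)}\le z_0^{\theta_k p_k/(p_k-1)}.
\end{equation*}
In parallel, by dropping the non-negative integral term inside the parentheses defining $\hat{G}_k$ and using $\overline{X}_T\le C_2$ from Proposition \ref{5.26.2}, I obtain
\begin{equation*}
\hat{G}_k(t,\overline{X}_T,\hat{Z})\le e^{a_k(t-T)}(\overline{X}_T)^{p_k\theta_k/(1-p_k)}\le e^{a_k(t-T)}C_2^{p_k\theta_k/(1-p_k)},
\end{equation*}
while dropping the non-positive term in the exponent defining $\hat{f}_k$ gives
\begin{equation*}
\hat{f}_k(t,\overline{X}_T,\hat{Z})\le x_0\exp\!\left(\frac{\mu_k^2}{(1-p_k)\sigma_k^2}t\right).
\end{equation*}

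Putting these three estimates together, the integrand in the ODE for $\hat{Z}$ is dominated, uniformly in $t\in[0,T]$, by a finite sum over $k\in\{1,\dots,K\}$ of terms of the form $\delta x_0 z_0^{\theta_k p_k/(p_k-1)}C_2^{p_k\theta_k/(1-p_k)}e^{-a_k T}\exp\{(\beta_k+a_k+\mu_k^2/((1-p_k)\sigma_k^2))t\}F(\{k\})$, which is itself bounded by the constant $E$ introduced in the proof of Proposition \ref{prop2.3power}. Therefore $d\hat{Z}_t/dt\le EK$, and integrating from $0$ to $t$ yields
\begin{equation*}
\hat{Z}_t\le z_0+EKt=M(t),\qquad \forall\,t\in[0,T].
\end{equation*}
Monotonicity of $M(\cdot)$ and the bound $M(t)\le M(T)$ are immediate from its affine form, and the constants $C_2,E,K$ depend only on $(x_0,z_0,\delta,T)$ and $\{o_k\}_{k=1}^K$ as required. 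There is no substantive obstacle in this argument; the only subtlety to be careful about is tracking the sign of $\theta_k p_k/(p_k-1)$ so that the lower bound $\hat{Z}_t\ge z_0$ produces an upper bound on $(\hat{Z}_t)^{\theta_k p_k/(p_k-1)}$ rather than a lower one.
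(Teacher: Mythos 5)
Your argument is correct and follows essentially the same route as the paper's proof in Appendix A: the lower bound $\hat{Z}_t\geq z_0$ from the nonnegativity of the drift, then the three pointwise estimates on $(\hat{Z}_t)^{\theta_k p_k/(p_k-1)}$, $\hat{G}_k$ and $\hat{f}_k$ (using $\overline{X}_T\leq C_2$ from Proposition \ref{5.26.2}) to dominate $d\hat{Z}_t/dt$ by $EK$ and integrate. The only difference is cosmetic: you spell out the monotonicity argument for the lower bound, which the paper treats as immediate.
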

\begin{proof}
	It is enough to find a function $M(\cdot)$ and prove  $\hat{Z}_{t}\leq M(t)$ for all $t\in[0,T]$. 
	
	We have from (\ref{new fixedpower})
	\begin{equation}\label{5.26.3}
		\begin{split}
			d\hat{Z}_{t}&=\delta\sum_{k=1}^{K}e^{\beta_{k}t}\left(\hat{Z}_{t}\right)^{\frac{\theta_{k}p_{k}}{p_{k}-1}}\hat{G}_{k}(t,\overline{X}_{T},\hat{Z})\hat{f}_{k}(t,\overline{X}_{T},\hat{Z})F(\left\{k\right\})dt\\&\leq \delta x_{0}\sum_{k=1}^{K}e^{\beta_{k}t}z_{0}^{\frac{\theta_{k}p_{k}}{p_{k}-1}}e^{a_{k}(t-T)}\overline{X}_{T}^{\frac{p_{k}\theta_{k}}{1-p_{k}}}\exp\left\{\frac{\mu_{k}^{2}}{(1-p_{k})\sigma_{k}^{2}}t\right\}dt\\&\leq \delta x_{0}\sum_{k=1}^{K}e^{-a_{k}T}z_{0}^{\frac{\theta_{k}p_{k}}{p_{k}-1}}C_{2}^{\frac{p_{k}\theta_{k}}{1-p_{k}}}\exp\left\{\left(\beta_{k}+a_{k}+\frac{\mu_{k}^{2}}{(1-p_{k})\sigma_{k}^{2}}\right)t\right\}dt\\&\leq EKdt,\quad \forall t\in [0,T],
		\end{split}
	\end{equation}
	where\begin{equation}
		\begin{split}
			&\beta_{k}:= \frac{\delta }{1-p_{k}}\left(1+\theta_{k}p_{k}-p_{k}\right),\\
			&E:=\delta x_{0}\max_{1\leq k\leq K}e^{-a_{k}T}z_{0}^{\frac{\theta_{k}p_{k}}{p_{k}-1}}C_{2}^{\frac{p_{k}\theta_{k}}{1-p_{k}}}\left(\max_{t\in[0,T]}\exp\left\{\left(\beta_{k}+a_{k}+\frac{\mu_{k}^{2}}{(1-p_{k})\sigma_{k}^{2}}\right)t\right\}\right).
		\end{split}
	\end{equation}
	For the first inequality, we have just used 
	\begin{equation}
		\begin{split}
			&\hat{f}_{k}(t,\overline{X}_{T},\hat{Z})\leq x_{0}\exp\left\{\frac{\mu_{k}^{2}}{(1-p_{k})\sigma_{k}^{2}}t\right\},\\
			&\hat{G}_{k}(t,\overline{X}_{T},\hat{Z})\leq e^{ a_{k}(t-T)}\left(\overline{X}_{T}\right)^{\frac{p_{k}\theta_{k}}{1-p_{k}}},\\
			&\hat{Z}_{t}\geq z_{0}.
		\end{split}
	\end{equation}
	And for the second inequality, we have just used $\overline{X}_{T}\leq C_{2}$. 
	
	Integrating (\ref{5.26.3}), we obtain 
	\begin{equation}
		\hat{Z}_{t}\leq M(t):= EKt+z_{0}\leq M(T),\quad \forall t\in[0,T].
	\end{equation}
\end{proof}
Combining (\ref{5.26.1}) with Proposition \ref{5.26.5}, we  get a new lower bound for $\overline{X}_{T}$:
\begin{equation}\label{6.21}
	\begin{split}
		\overline{X}_{T}\geq C_{2}\exp&\left\{-\int_{0}^{T}\sum_{k=1}^{K}\exp\left(\frac{\theta _{k}p_{k}\delta}{1-p_{k}}s\right)z_{0}^{\frac{\theta_{k}p_{k}}{p_{k}-1}}\right.\\&\left.\left(\int_{s}^{T}e^{a_{k}(v-s)}\exp\left(\frac{\theta_{k}p_{k}\delta}{1-p_{k}}v\right)\left(\hat{Z}_{v}\right)^{\frac{\theta_{k}p_{k}}{p_{k}-1}}dv\right)^{-1}ds\right\}\\\geq C_{0}\\:=
		C_{2}\exp&\left\{-\int_{0}^{T}\sum_{k=1}^{K}\exp\left(\frac{\theta _{k}p_{k}\delta}{1-p_{k}}s\right)z_{0}^{\frac{\theta_{k}p_{k}}{p_{k}-1}}\right.\\&\left.\left(\int_{s}^{T}e^{a_{k}(v-s)}\exp\left(\frac{\theta_{k}p_{k}\delta}{1-p_{k}}v\right)\left(M(T)\right)^{\frac{\theta_{k}p_{k}}{p_{k}-1}}dv\right)^{-1}ds\right\},
	\end{split}
\end{equation}
where the first inequality follows from $$\hat{G}_{k}(s,\overline{X}_{T},\hat{Z})<\left(\int_{s}^{T}e^{a_{k}(v-s)}\exp\left(\frac{\theta_{k} p_{k}\delta}{1-p_{k}}v\right)\left(\hat{Z}_{v}\right)^{\frac{\theta_{k}p_{k}}{p_{k}-1}}dv\right)^{-1}$$ and the second inequality follows from $\hat{Z}_{t}\leq M(T)$ for any $t\in[0,T]$.

\section{Additional Explanation to Assumption 5}\label{appendixcara}
Let $i\in\left\{1,\cdots,n\right\}$ and $x_{0}\in\mathbb{R}$ be given. Assume that $\Pi^{i}:[0,T]\rightarrow\mathbb{R}$ is a continuous function and $C^{i}:[0,T]\times\mathbb{R}\rightarrow\mathbb{R}$ is of an affine form:
$$C^{i}(t,x):=p^{i}(t)x+q^{i}(t),\quad (t,x)\in[0,T]\times\mathbb{R},$$
for some continuous functions $p^{i},q^{i}:[0,T]\rightarrow\mathbb{R}$. Then, solving the following closed-loop system for $\left(X^{*,1},\cdots,X^{*,n},X^{i}\right)$:
\begin{equation}
	\begin{cases}
		dX^{*,j}_{t}=\left(\Pi^{*,j}(t)\mu_{\alpha(j)}-C^{*,j}(t,X^{*,j}_{t})\right)dt+\Pi^{*,j}(t)\sigma_{\alpha(j)}dW^{j}_{t},\quad j=1,\cdots, n,\\
		dX^{i}_{t}=\left(\Pi^{i}(t)\mu_{\alpha(i)}-C^{i}(t,X^{i}_{t})\right)dt+\Pi^{i}(t)\sigma_{\alpha(i)}dW^{i}_{t},\\
		X^{*,j}_{0}=x_{0},\quad j=1,\cdots,n,\\
		X^{i}_{0}=x_{0},
	\end{cases}
\end{equation}
where $\left(\Pi^{*,i},C^{*,i}\right)_{i=1}^{n}$ is given by (\ref{candi}), we obtain that for $t\in[0,T]$,
\begin{equation*}
	\begin{split}
		&X^{*,j}_{t}=f^{j}(t)+\left(T+1-t\right)\frac{\mu_{\alpha(j)}}{\sigma_{\alpha(j)}}\beta_{\alpha(j)}W^{j}_{t},\quad j=1,\cdots,n,\\
		&X^{i}_{t}=g^{i}(t)+e^{-\int_{0}^{t}p^{i}(s)ds}\int_{0}^{t}e^{\int_{0}^{s}p^{i}(v)dv}\Pi^{i}(s)\sigma_{\alpha(i)}dW^{i}_{s},
	\end{split}
\end{equation*}
where 
\begin{equation}
	\begin{split}
		&f^{j}(t):=x_{0}\frac{T+1-t}{T+1}+\left(T+1-t\right)\int_{0}^{t}\left[\frac{\theta_{\alpha(j)}}{\left(T+1-s\right)^{2}}\left(\int_{s}^{T}\overline{Z}_{v}dv+\overline{X}_{T}\right)-\frac{\theta_{\alpha(j)}}{T+1-s}\overline{Z}_{s}\right.\\&\left.+\frac{1}{4}\left(\frac{\mu_{\alpha(j)}}{\sigma_{\alpha(j)}}\right)^{2}\beta_{\alpha(j)}\frac{1}{\left(T+1-s\right)^{2}}+\frac{3}{4}\left(\frac{\mu_{\alpha(j)}}{\sigma_{\alpha(j)}}\right)^{2}\beta_{\alpha(j)}\right]ds,\quad j=1,\cdots,n,\\
		&g^{i}(t):=x_{0}e^{-\int_{0}^{t}p^{i}(s)ds}+e^{-\int_{0}^{t}p^{i}(s)ds}\int_{0}^{t}e^{\int_{0}^{s}p^{i}(v)dv}\left(\Pi^{i}(s)\mu_{\alpha(i)}-q^{i}(s)\right)ds.
	\end{split}
\end{equation}
It is obvious that there exists a constant $M$ independent of $j$ and $n$, such that for all $j\in\left\{1,\cdots,n\right\}$, 
\begin{equation}
	\sup_{t\in[0,T]}|f^{j}(t)|\leq M.
\end{equation}
 Note that 
\begin{equation}
	\begin{split}
		&\sup_{t\in[0,T]}|\Pi^{*,i}(t)|\leq\sup_{1\leq k\leq K}\left(\beta_{k}\frac{\mu_{k}}{(\sigma_{k})^{2}}\right)(T+1),\quad i=1,\cdots,n,\\
		&C^{*,i}(t,x)=\frac{1}{T+1-t}x+h^{i}(t),\quad i=1,\cdots,n,
	\end{split}
\end{equation}
where $h^{i}:[0,T]\rightarrow\mathbb{R}$ is continuous and has a uniform bound $M$, to abuse the notation, which is independent of $i$ and $n$, i.e., $\sup_{t\in[0,T]}|h^{i}(t)|\leq M$ for all $i\in\left\{1,\cdots,n\right\}$.

In this section, we only show 
\begin{equation}
\mathbb{E}\left[\int_{0}^{T}\left|U_{\alpha(i)}\left(C^{i}_{t}-\theta_{\alpha(i)}\overline{Z}^{*,n}_{t}\right)\right|^{2}dt+\left|U_{\alpha(i)}\left(X^{i}_{T}-\theta_{\alpha(i)}\overline{X}_{T}^{*,n}\right)\right|^{2}\right]<M<\infty,
\end{equation}
where $M$ is a constant independent of $n$, as the other condition in Assumption \ref{assump5} is much easier to verify.

Without loss of generality, we assume $\alpha(i)=1$. Then, 
\begin{equation*}
	\begin{split}
		&\left|U_{1}\left(C^{i}_{t}-\theta_{1}\overline{Z}^{*,n}_{t}\right)\right|^{2}\\&=\exp\left\{-\frac{2}{\beta_{1}}\left(p^{i}(t)X^{i}_{t}+q^{i}(t)\right)+2\frac{\theta_{1}}{\beta_{1}}\overline{Z}^{*,n}_{t}\right\}\\&=\exp\left\{-\frac{2}{\beta_{1}}\left(p^{i}(t)X^{i}_{t}+q^{i}(t)\right)+2\frac{\theta_{1}}{\beta_{1}}e^{-\delta t}z_{0}+\frac{\theta_{1}}{\beta_{1}}\frac{2}{n}\sum_{j=1}^{n}\int_{0}^{t}\delta e^{\delta(s-t)}C^{*,j}(s,X^{*,j}_{s})ds\right\}\\&\leq C\exp\left\{-\frac{2}{\beta_{1}}p^{i}(t)X^{i}_{t}+\delta\frac{\theta_{1}}{\beta_{1}}\frac{2}{n}\sum_{j=1}^{n}\int_{0}^{t}\frac{e^{\delta(s-t)}}{T+1-s}X^{*,j}_{s}ds\right\}\\&
		\leq C\exp\left\{-\frac{2}{\beta_{1}}p^{i}(t)e^{-\int_{0}^{t}p^{i}(s)ds}\int_{0}^{t}e^{\int_{0}^{s}p^{i}(v)dv}\Pi^{i}(s)\sigma_{1}dW^{i}_{s}+\delta\frac{\theta_{1}}{\beta_{1}}\frac{2}{n}\sum_{j=1}^{n}\int_{0}^{t}e^{\delta(s-t)}\frac{\mu_{\alpha(j)}}{\sigma_{\alpha(j)}}\beta_{\alpha(j)}W^{j}_{s}ds\right\}.
	\end{split}
\end{equation*}
By {It\^{o}'s} formula, we get 
\begin{equation}
\int_{0}^{t}e^{\delta s}\frac{\mu_{\alpha(j)}}{\sigma_{\alpha(j)}}\beta_{\alpha(j)}W^{j}_{s}ds=\int_{0}^{t}\left(F^{j}(t)-F^{j}(s)\right)dW^{j}_{s},
\end{equation}
where $$F^{j}(t):=\frac{\mu_{\alpha(j)}\beta_{\alpha(j)}}{\delta\sigma_{\alpha(j)}}\left(e^{\delta t}-1\right).$$
Therefore, we have
\begin{equation*}
	\begin{split}
		\left|U_{1}\left(C^{i}_{t}-\theta_{1}\overline{Z}^{*,n}_{t}\right)\right|^{2}
		\leq C\exp\left\{\int_{0}^{t}G(s,t)dW^{i}_{s}+\frac{1}{n}\sum_{j=1}^{n}\int_{0}^{t}H^{j}(s,t)dW^{j}_{s}\right\},
	\end{split}
\end{equation*}
for some bounded functions $G,H^{j}:[0,T]\times[0,T]\rightarrow\mathbb{R}$. It is worth noting that $H^{j}$ has a uniform bound, which is still denoted by $M$.

Hence, 
\begin{equation}
	\begin{split}
		\mathbb{E}\left[\left|U_{1}\left(C^{i}_{t}-\theta_{1}\overline{Z}^{*,n}_{t}\right)\right|^{2}\right]&\leq C\mathbb{E}\left[\exp\left\{\int_{0}^{t}G(s,t)dW^{i}_{s}+\frac{1}{n}\sum_{j=1}^{n}\int_{0}^{t}H^{j}(s,t)dW^{j}_{s}\right\}\right]\\&=\exp\left\{\int_{0}^{t}\left(G(s,t)+\frac{1}{n}H^{i}(s,t)\right)^{2}ds+\frac{1}{n}\sum_{j\neq i}^{n}\int_{0}^{t}\left(H^{j}(s,t)\right)^{2}ds\right\} .
	\end{split}
\end{equation}
Then, it is straightforward to get
\begin{equation}
	\mathbb{E}\left[\int_{0}^{T}\left|U_{\alpha(i)}\left(C^{i}_{t}-\theta_{\alpha(i)}\overline{Z}^{*,n}_{t}\right)\right|^{2}dt\right]<M<\infty,
\end{equation}
where $M$ is a constant independent of $n$.
The estimation on term $\mathbb{E}\left[\left|U_{1}\left(X^{i}_{T}-\theta_{1}\overline{X}_{T}^{*,n}\right)\right|^{2}\right]$ is similar and we omit it here.

\bibliographystyle{plainnat}
\bibliography{ref}

\end{document}